\crefname{hypothesis}{Hypothesis}{Hypotheses}
\title{Newton-based alternating methods for the ground state of a class of multi-component Bose-Einstein condensates\thanks{Submitted to the editors Date.
\funding{The second author was funded by the National Natural Science Foundation of China (No. 12071234) and a Key Program of Natural Science Foundation of Tianjin, China (No. 21JCZDJC00220).}}}
\author{Pengfei Huang\thanks{School of Mathematics, Hunan University, Changsha, Hunan 410082, China 
  (\email{huangpf@hnu.edu.cn}).}
\and Qingzhi Yang\thanks{Corresponding author. School of Mathematical Sciences and LPMC, Nankai University, Tianjin 300071, China
  (\email{qz-yang@nankai.edu.cn}).}
}
\begin{document}

\maketitle

\begin{abstract}
	The computation of the ground state of special multi-component Bose-Einstein condensates (BECs) can be formulated as an energy functional minimization problem with spherical constraints. It leads to a nonconvex quartic-quadratic optimization problem after suitable discretizations. First, we generalize the Newton-based methods for single-component BECs to the alternating minimization scheme for multi-component BECs. Second, the global convergent alternating Newton-Noda iteration (ANNI) is proposed. In particular, we prove the positivity preserving property of ANNI  under mild conditions. Finally, our analysis is applied to a class of more general ``multi-block" optimization problems with spherical constraints. Numerical experiments are performed to evaluate the performance of proposed methods for different multi-component BECs, including pseudo spin-1/2, anti-ferromagnetic spin-1 and spin-2 BECs. These results support our theory and demonstrate the efficiency of our algorithms.
\end{abstract}

\begin{keywords}
Newton method, alternating method, ground state, multi-component Bose-Einstein condensates
\end{keywords}

\begin{MSCcodes}
49M15, 90C26, 35Q55, 65N25
\end{MSCcodes}

	\section{Introduction}
Since the first realization of Bose-Einstein condensates (BECs) was announced in 1995 \cite{anderson1995observation,davis1995bose,bradley1995evidence}, numerous researchers have been attracted into the theoretical studies and numerical methods for the single-component BEC \cite{andersen2004theory,dalfovo1999theory,bao2003ground,cances2010numerical,bao2013mathematical,antoine2017efficient}. While the pioneering experiments were conducted for single species of atoms, it is a natural generalization to explore the multi-component BEC system. Through optical confinements, various spinor condensates have been achieved and revealed exciting phenomena absent in single-component BECs, including pseudo spin-1/2, spin-1 and spin-2 condensates \cite{myatt1997production,stamper1998optical,barrett2001all,chang2004observation}. In this growing research direction, mathematical models and numerical simulation have been playing an important role in understanding the theoretical part of spinor BECs \cite{kawaguchi2012spinor,stamper2013spinor}. 

One of the fundamental problems in BECs is to find the ground state, which is defined as the minimizer of the Gross-Pitaevskii (GP) energy functional minimization problem subject to some physical constraints \cite{bao2018mathematical,bao2004ground}. The simplest multi-component BEC is the binary mixture. Furthermore, it has been proved that the energy functional minimization problem for spin-$F~(F=1,2)$ multi-component BECs can be reduced to a two-component BEC problem under some conditions \cite{chern2014kinetic,bao2018mathematical}. Computing the ground state of such a two-component BEC can be expressed as
\begin{equation}\label{equ:spin-1/2}
	\begin{aligned}
		\min~& E(\Phi) = \int_{\mathbb{R}^d}\left[\frac{1}{2}(|\nabla \phi_1|^2+|\nabla \phi_2|^2)+V(\mathbf{x})(|\phi_1|^2+|\phi_2|^2)\right.\\
		&\left.+\frac{1}{2}\beta_{11}|\phi_1|^4+\frac{1}{2}\beta_{22}|\phi_2|^4+\beta_{12}|\phi_1|^2|\phi_2|^2\right]d\mathbf{x}\\
		\text{s.t.}~&\|\phi_1\|^2 = \alpha,~\|\phi_2\|^2=1-\alpha,
	\end{aligned}
\end{equation}
where $\alpha\in(0,1)$. $\mathbf{x}\in\mathbb{R}^d~(d=1,2,3)$ is the spatial coordinate vector, $\Phi=(\phi_1(\mathbf{x}), \phi_2(\mathbf{x}))^T$ is a two-component vector wave function, $V(\mathbf{x})$ is a real-valued external trapping potential, $\beta_{11},\beta_{22},\beta_{12}\in\mathbb{R}$ are interaction constants \cite{kawaguchi2012spinor,bao2018mathematical}. 

With appropriate discretizations, many methods have been proposed for computing the ground state of multi-component BECs, which includes Gauss-Seidel-type methods for the vector GP equation \cite{chang2005gauss}, the normalized gradient flow method for spin-1 BECs \cite{bao2007mass,bao2013efficient,bao2008computing} and the projection gradient method for spin-2 BECs \cite{wang2014projection}. However, most of the existing numerical methods evolve from the gradient flow method, and thus converge slowly or even have no convergence guarantee in general. Recently, the state-of-art Riemannian optimization method solving minimization problems on matrix manifold \cite{absil2008optimization} has been introduced to compute the ground state of BECs. Tian et al. \cite{tian2020ground} proposed an efficient adaptive regularized Newton method for the spin-$F~(F=1,2,3,\cdots)$ cases with three different retractions on the manifold. On the other hand, there have been efficient Newton-based methods with locally quadratically convergence computing the ground state of single-component BECs \cite{liu2020positivity,du2022newton,huang2022newton}, which are easy to implement and require no complicate manifold optimization theory. Particularly, these methods are positivity presesrving for the BEC problem discretized with the finite difference scheme and thus can find a positive ground state, the existence and uniqueness of which has attracted much attention \cite{bao2013mathematical,bao2018mathematical,bao2013efficient}. To our best knowledge, the existing methods for the multi-component BECs cannot guarantee a positive solution unless the initial point is close enough to a positive ground state.

In this paper, we aim to provide an efficient global convergent algorithm for computing a class of multi-component BECs and explore the positivity preserving property of proposed algorithms under some conditions. In order to achieve this goal, we first consider the finite difference and pseudo-spectral discretization of the two-component BEC problem \eqref{equ:spin-1/2} with real-valued ground state, both of which lead to the following structured nonconvex optimization problem with spherical constraints
\begin{equation}\label{equ:problem}
	\begin{aligned}
		\underset{\mathbf{u},\mathbf{v}\in\mathbb{R}^n}{\min}~&f(\mathbf{u},\mathbf{v})=\frac{\beta_{11}}{2}\sum_{i=1}^nu_i^4+\mathbf{u}^TA_1\mathbf{u}+\frac{\beta_{22}}{2}\sum_{i=1}^nv_i^4+\mathbf{v}^TA_2\mathbf{v}+\beta_{12}\sum_{i=1}^nu_i^2v_i^2\\
		\text{s.t.}~&\mathbf{u}^T\mathbf{u}=1,~\mathbf{v}^T\mathbf{v}=1,
	\end{aligned}
\end{equation}
where $\beta_{11},~\beta_{22}>0$. $A_1$ and $A_2$ are the sum of discretization matrices for the negative Laplace operator and potential $V(\mathbf{x})$ with scaling. Throughout this paper, we assume they are real symmetric matrices. And when they are Hermitian matrices with the Fourier pseudo-spectral discretization, replacing $A_i~(i=1,2)$ with $(A_i+\bar{A_i})/2$ where $\bar{A_i} $ is the complex conjugate of $A_i$ for analysis. Its corresponding first-order optimality condition, the so-called
coupled nonlinear algebraic eigenvalue problem with eigenvector nonlinearity (CNEPv) is as follows
\begin{equation}\label{equ:CNEPv}
	\begin{aligned}
		\lambda\mathbf{u}=A_1\mathbf{u}+(\beta_{11}\text{diag}(\mathbf{u}^{[2]})+\beta_{12}\text{diag}(\mathbf{v}^{[2]}))\mathbf{u},~\mathbf{u}^T\mathbf{u}=1,\\
		\mu\mathbf{v}=A_2\mathbf{v}+(\beta_{12}\text{diag}(\mathbf{u}^{[2]})+\beta_{22}\text{diag}(\mathbf{v}^{[2]}))\mathbf{v},~\mathbf{v}^T\mathbf{v}=1,
	\end{aligned}
\end{equation}
where $(\lambda,\mathbf{u},\mu,\mathbf{v})$ is an eigenpair of CNEPv \eqref{equ:CNEPv}. The discretization of coupled GP equations can also lead to \eqref{equ:CNEPv} \cite{bao2004ground}. See the detailed transformation from \eqref{equ:spin-1/2} to \eqref{equ:problem} in section \ref{sec:numerical}. Through exploring the symmetric block structure of \eqref{equ:problem}, it is natural to fix $\mathbf{u}$ and $\mathbf{v}$ respectively, and solve the subproblem utilizing algorithms for the single-component BECs \cite{wu2017regularized,huang2022newton,du2022newton} alternatively. In particular, Newton-Noda iteration (NNI) has been successfully applied to solve nonnegative tensor eigenvalue problems \cite{liu2016positivity} and compute the positive ground states of nonlinear schr\"odinger equations \cite{du2022newton}. However, the convergence of NNI is heavily dependent on the so-called ``$M$-matrix" property \cite{du2022newton,huang2022newton}. To improve the efficiency, generalize the scope of application, and preserve the positivity under some conditions, we propose an alternating NNI (ANNI) based on the sufficient descent property of the objective function, which conducts only one-step modified NNI for $\mathbf{u}$ and $\mathbf{v}$ alternatively in each iteration. We show the global convergence of algorithms with no requirement for discretization schemes and ensure that ANNI has the positivity preserving property for \eqref{equ:problem} with the finite difference discretization.

The rest of this paper is organized as follows. We start with some preliminaries and notations in section \ref{sec:pre}. The alternating minimization scheme for \eqref{equ:problem} is introduced in section \ref{sec:ALM}. In section \ref{sec:ANNI}, we propose an alternating Newton-Noda iteration for solving the discretized optimization problem \eqref{equ:problem}. The convergence analysis and positivity preserving property are also provided. In section \ref{sec:general}, the extension of proposed algorithms and theoretical analysis to more general multi-block optimization problems is presented. The detailed numerical results are reported in section \ref{sec:numerical} to verify the theoretical results and performance of algorithms. Finally, concluding remarks are given in section \ref{sec:conclude}.

\section{Preliminaries and notations}\label{sec:pre}
Throughout this paper, we assume that $\beta_{11},~\beta_{22},~\beta_{12}>0$. A vector is denoted by the bold letter $\mathbf{u}$, and the capital letter $A$ denotes a matrix. $\|\cdot\|$ denotes the 2-norm of vectors and matrices. In addition, for a vector $\mathbf{u}=[u_1,u_2,\cdots,u_n]^T$,
\[\min(\mathbf{u})=\underset{i}\min~u_i,\quad\max(\mathbf{u})=\underset{i}{\max}~u_i.\]
We will also use $(\mathbf{u})_i$ to represent the $i$th component of $\mathbf{u}$. Let $\mathbf{u}^{[m]}=[u_1^m,u_2^m,\cdots u_n^m]^T$, and $\text{diag}(\mathbf{u})$ represents a diagonal matrix with the diagonal given by the vector $\mathbf{u}$. For $\mathbf{u}_1,\mathbf{u}_2\in\mathbb{R}^n$, $\mathbf{u}_1>(\ge)\mathbf{u}_2$ denotes that $(\mathbf{u}_1)_i>(\ge)(\mathbf{u}_2)_i$, $i=1,2,\cdots,n$. For a matrix $A$, $\lambda_{\min}(A)$ denotes the smallest eigenvalue of $A$.

\begin{definition}($M$-matrix \cite{varga1962iterative})
	A matrix $A\in\mathbb{R}^{n\times n}$ is called an $M$-matrix, if $A=sI-C$, where $C$ is nonnegative and $s\ge\rho(C)$. Here $\rho(C)$ is the spectral radius of $C$.
\end{definition}

\begin{definition}(Irreducibility/Reducibility\cite{varga1962iterative})
	A matrix $A\in\mathbb{R}^{n\times n}$ is called reducible, if there exists a nonempty proper index subset $I\subset\{1,2,\cdots,n\}$, such that
	\[a_{ij}=0,\quad\forall i\in I,~\forall j\notin I.\]
	If $A$ is not reducible, then we call $A$ irreducible.
\end{definition}

\begin{theorem}(\cite[Theorem 3.16, Corollary 3.21]{varga1962iterative})\label{thm:M-matrix}
	Let $A=sI-C$, where $C\in\mathbb{R}^{n\times n}$ is nonnegative. The following are equivalent:
	\begin{enumerate}[(i)]
		\item $A$ is a nonsingular $M$-matrix.
		\item $A^{-1}$ exists and $A^{-1}$ is nonnegative.
		\item There exists $\mathbf{x}>0$ such that $A\mathbf{x}>0$.
	\end{enumerate}
	Futhermore, if $A$ is a real, symmetric and nonsingular irreducible matrix, then $A^{-1}>0$ if and only if $A$ is positive definite.
\end{theorem}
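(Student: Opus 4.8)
The plan is to establish the three-way equivalence by proving the cycle $(i)\Rightarrow(ii)\Rightarrow(iii)\Rightarrow(i)$, and then to dispatch the ``furthermore'' clause as a short addendum. The tools needed are minimal: the Neumann series for a nonnegative matrix whose spectral radius is below one, the observation that a nonsingular nonnegative matrix cannot have a zero row, and the elementary bound $\rho(C)\le\max_i (C\mathbf{x})_i/x_i$, valid for $C\ge 0$ and any $\mathbf{x}>0$, obtained via a diagonal similarity.

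For $(i)\Rightarrow(ii)$: writing $A=sI-C$ as in the definition of an $M$-matrix, nonsingularity of $A$ says that $s$ is not an eigenvalue of $C$, which together with $s\ge\rho(C)$ and the fact that $\rho(C)$ is itself an eigenvalue of the nonnegative matrix $C$ forces $s>\rho(C)$; hence $\rho(s^{-1}C)<1$, the series $s^{-1}\sum_{k\ge 0}s^{-k}C^k$ converges to $A^{-1}$, and since every term is nonnegative we get $A^{-1}\ge 0$. For $(ii)\Rightarrow(iii)$: take $\mathbf{y}=(1,\dots,1)^T>0$ and put $\mathbf{x}=A^{-1}\mathbf{y}$, so that $A\mathbf{x}=\mathbf{y}>0$; moreover $\mathbf{x}>0$, since if $x_i=0$ then the $i$-th row of $A^{-1}$, being nonnegative with zero inner product against the strictly positive $\mathbf{y}$, would vanish, contradicting invertibility of $A^{-1}$.

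The step I expect to be the crux is $(iii)\Rightarrow(i)$. Assume $\mathbf{x}>0$ satisfies $A\mathbf{x}>0$; since $A=sI-C$ this is the componentwise inequality $C\mathbf{x}<s\mathbf{x}$. Let $D=\mathrm{diag}(\mathbf{x})$, which is invertible with positive diagonal. The matrix $D^{-1}CD$ is nonnegative, its $i$-th row sum equals $(C\mathbf{x})_i/x_i<s$, and it is similar to $C$; since the spectral radius of a nonnegative matrix is at most its maximal row sum, $\rho(C)=\rho(D^{-1}CD)<s$. Thus $A=sI-C$ with $s>\rho(C)$, i.e. $A$ is a nonsingular $M$-matrix, and the cycle closes.

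For the ``furthermore'' clause, impose in addition that $A$ (still of the form $sI-C$ with $C\ge 0$) is symmetric, nonsingular and irreducible; then $C=sI-A$ is symmetric, its spectrum is real and lies in $[-\rho(C),\rho(C)]$, and the eigenvalues of $A$ are the numbers $s-\lambda$ with $\lambda$ an eigenvalue of $C$. If $A$ is positive definite then $s-\lambda>0$ for every eigenvalue $\lambda$ of $C$, in particular for $\lambda=\rho(C)$, so $s>\rho(C)$ and $A$ is a nonsingular $M$-matrix; then $A^{-1}=s^{-1}\sum_{k\ge 0}s^{-k}C^k\ge 0$, and since irreducibility of $A$ is equivalent to that of $C$ (identical off-diagonal zero pattern) and $(I+C)^{n-1}>0$ for an irreducible nonnegative $C$, the partial sums of this series are entrywise strictly positive, whence $A^{-1}>0$. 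Conversely, if $A^{-1}>0$ then in particular $A^{-1}\ge 0$ and invertible, so by $(ii)\Rightarrow(i)$ the matrix $A$ is a nonsingular $M$-matrix, i.e. $s>\rho(C)$, and every eigenvalue $s-\lambda$ of $A$ satisfies $s-\lambda\ge s-\rho(C)>0$; hence $A$ is positive definite. The only parts requiring more than bookkeeping are the spectral-radius estimate in $(iii)\Rightarrow(i)$ and, for the strict positivity, the lemma $(I+C)^{n-1}>0$ for irreducible nonnegative $C$.
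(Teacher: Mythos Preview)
Your proof is correct, but note that the paper does not actually prove this theorem: it is quoted as a standard result from Varga's textbook (Theorem~3.16 and Corollary~3.21) and stated without proof, so there is no ``paper's own proof'' to compare against.

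As a self-contained argument your cycle $(i)\Rightarrow(ii)\Rightarrow(iii)\Rightarrow(i)$ is clean and standard: the Neumann series for $(i)\Rightarrow(ii)$, the zero-row argument for $(ii)\Rightarrow(iii)$, and the diagonal-similarity row-sum bound $\rho(C)\le\max_i (C\mathbf{x})_i/x_i$ for $(iii)\Rightarrow(i)$ are all correctly deployed. For the ``furthermore'' clause you are right to keep the standing hypothesis $A=sI-C$ with $C\ge 0$; without it the claim fails (e.g.\ $A=\bigl(\begin{smallmatrix}2&1\\1&2\end{smallmatrix}\bigr)$ is symmetric, irreducible and positive definite but $A^{-1}\not>0$). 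One cosmetic point: your inference from $(I+C)^{n-1}>0$ to strict positivity of the Neumann partial sums is slightly indirect; it is tidier to observe that irreducibility of $C$ gives $\sum_{k=0}^{n-1}C^k>0$ entrywise (each pair $(i,j)$ is connected by a walk of length at most $n-1$), so that $A^{-1}\ge s^{-1}\sum_{k=0}^{n-1}(C/s)^k>0$.
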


\begin{lemma}(\cite[Theorem 7.7.3]{horn2012matrix})\label{lem:matrix analysis}
	Let $A$ and $B$ be $n$ by $n$ Hermitian matrices and suppose that $A$ is positive definite. If $B$ is positive semidefinite, then $A-B$ is positive semidefinite (respectively, $A-B$ is positive definite) if and only if $\rho(A^{-1}B)\le 1$ (respectively, $\rho(A^{-1}B)<1$).
\end{lemma}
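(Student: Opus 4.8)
The plan is to reduce the two-matrix comparison to a statement about the spectrum of a single Hermitian positive semidefinite matrix by means of a congruence transformation. Since $A$ is positive definite, it possesses a unique Hermitian positive definite square root $A^{1/2}$, which is nonsingular with inverse $A^{-1/2}$. The first step is to recall that congruence by the nonsingular matrix $A^{-1/2}$ preserves both positive semidefiniteness and positive definiteness; hence $A-B$ is positive semidefinite (respectively, positive definite) if and only if
\[
A^{-1/2}(A-B)A^{-1/2} = I - A^{-1/2}BA^{-1/2}
\]
is positive semidefinite (respectively, positive definite).

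Next, write $M := A^{-1/2}BA^{-1/2}$. Because $B$ is positive semidefinite, $M$ is Hermitian and positive semidefinite, so all of its eigenvalues are real and nonnegative and $\rho(M)$ equals its largest eigenvalue. Using the spectral decomposition of $M$, one sees immediately that $I-M$ is positive semidefinite if and only if every eigenvalue of $M$ is at most $1$, that is, if and only if $\rho(M)\le 1$; likewise $I-M$ is positive definite if and only if every eigenvalue of $M$ is strictly less than $1$, that is, if and only if $\rho(M)<1$.

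It remains to identify $\rho(M)$ with $\rho(A^{-1}B)$. This follows because $M = A^{1/2}(A^{-1}B)A^{-1/2}$ exhibits $M$ as similar to $A^{-1}B$, so the two matrices have the same spectrum and in particular the same spectral radius. Chaining the three equivalences yields the lemma in both the semidefinite and the definite cases. The one point that needs care is that $A^{-1}B$ is in general not Hermitian, so a priori its eigenvalues need not be real; this is precisely what the similarity to the Hermitian positive semidefinite matrix $M$ repairs, and it is also the reason the governing quantity is the spectral radius rather than a signed extremal eigenvalue. Beyond this observation the argument is routine bookkeeping with the square root, so I do not anticipate a substantive obstacle.
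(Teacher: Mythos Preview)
Your argument is correct and is in fact the standard proof of this result. The paper does not supply its own proof of this lemma; it is quoted verbatim from \cite[Theorem 7.7.3]{horn2012matrix}, and your congruence-by-$A^{1/2}$ reduction to $I-A^{-1/2}BA^{-1/2}$ together with the similarity $A^{-1/2}BA^{-1/2}=A^{1/2}(A^{-1}B)A^{-1/2}$ is exactly the approach found there.
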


Denote $f_{\mathbf{v}}(\mathbf{u})=f(\mathbf{u},\mathbf{v})$ for any fixed $\mathbf{v}$ and define $f_{\mathbf{u}}(\mathbf{v})$ for any fixed $\mathbf{u}$ similarly. For convenience, we simplify \eqref{equ:CNEPv} into the following form:
\begin{equation*}
	\begin{aligned}
		\mathcal{A}_{\mathbf{v}}(\mathbf{u})\mathbf{u}=\lambda\mathbf{u},~\mathbf{u}^T\mathbf{u}=1,\\
		\mathcal{A}_{\mathbf{u}}(\mathbf{v})\mathbf{v}=\mu\mathbf{v},~\mathbf{v}^T\mathbf{v}=1,
	\end{aligned}
\end{equation*}
where $\mathcal{A}_{\mathbf{v}}(\mathbf{u})=\beta_{11}\text{diag}(\mathbf{u}^{[2]})+(A_1+\beta_{12}\text{diag}(\mathbf{v}^{[2]}))$ and $\mathcal{A}_{\mathbf{u}}(\mathbf{v})$ is defined similarly.
Analogous to the notations for the NEPv \cite{du2022newton,huang2022newton}, we define
\begin{equation}\label{equ:F}
	F_{\mathbf{v}}(\mathbf{u},\lambda)=\begin{bmatrix}
		\mathbf{r}_{\mathbf{v}}(\mathbf{u},\lambda)\\\frac{1}{2}(1-\mathbf{u}^T\mathbf{u})
	\end{bmatrix},\quad \text{where }\mathbf{r}_\mathbf{v}(\mathbf{u},\lambda)=\mathcal{A}_{\mathbf{v}}(\mathbf{u})\mathbf{u}-\lambda\mathbf{u}.
\end{equation}
The Jacobian of \eqref{equ:F} is given by
\begin{equation*}
	F'_{\mathbf{v}}(\mathbf{u},\lambda)=
	\begin{bmatrix}
		J_{\mathbf{v}}(\mathbf{u},\lambda)&-\mathbf{u}\\
		-\mathbf{u}^T&0
	\end{bmatrix},
\end{equation*}
where $J_{\mathbf{v}}(\mathbf{u},\lambda)=3\beta_{11}\text{diag}(\mathbf{u}^{[2]})+(A_1+\beta_{12}\text{diag}(\mathbf{v}^{[2]}))-\lambda I$ is the derivative of $\mathbf{r}_{\mathbf{v}}(\mathbf{u},\lambda)$ with respect to $\mathbf{u}$. It is easy to obtain that $J_{\mathbf{v}}(\mathbf{u},\lambda)=\mathcal{A}_{\mathbf{v}}(\mathbf{u})-\lambda I+2\beta_{11}\text{diag}(\mathbf{u}^{[2]})$ and
\begin{equation}\label{equ:Jrelation}
	J_{\mathbf{v}}(\mathbf{u},\lambda)\mathbf{u}=\mathbf{r}_{\mathbf{v}}(\mathbf{u},\lambda)+2\beta_{11}\mathbf{u}^{[3]}.
\end{equation}
The counterpart definition with respect to $\mathbf{v}$ is parallel.

\section{The alternating minimization method}\label{sec:ALM}
In this section, we explore the block structure of \eqref{equ:problem}. It is natural to present the alternating minimization scheme for it as follows
\begin{equation}\label{equ:ALM}
	\begin{aligned}
		\mathbf{u}_{k+1}& =\underset{\|\mathbf{u}\|=1}{\text{arg}\min}~ \frac{\beta_{11}}{2}\sum_{i=1}^nu_i^4+\mathbf{u}^T(A_1+\beta_{12}\text{diag}(\mathbf{v}_{k}^{[2]}))\mathbf{u},\\
		\mathbf{v}_{k+1}& =\underset{\|\mathbf{v}\|=1}{\text{arg}\min}~ \frac{\beta_{22}}{2}\sum_{i=1}^nv_i^4+\mathbf{v}^T(A_2+\beta_{12}\text{diag}(\mathbf{u}_{k+1}^{[2]}))\mathbf{v}.
	\end{aligned}
\end{equation}
And it is obvious that each subproblem in \eqref{equ:ALM} indeed corresponds to a 	``single-component BEC" problem.
\begin{lemma}\label{lem:subproblem}
	If $\beta_{11},~\beta_{22}>0$, $A_1$ and $A_2$ are all irreducible nonsingular $M$-matrices, then each subproblem in the alternating minimization scheme \eqref{equ:ALM} has a unique positive global optimum.
\end{lemma}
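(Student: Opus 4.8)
The plan is to view each subproblem as a ``single-component BEC'', establish existence and positivity of a minimizer via an $M$-matrix/irreducibility argument, and then obtain uniqueness through a convexifying change of variables. Both subproblems in \eqref{equ:ALM} have the form
\[
  \min_{\mathbf{u}^T\mathbf{u}=1}\ g(\mathbf{u}),\qquad g(\mathbf{u})=\frac{\beta_{11}}{2}\sum_{i=1}^n u_i^4+\mathbf{u}^TB\mathbf{u},
\]
with $B=A_1+\beta_{12}\,\text{diag}(\mathbf{v}_k^{[2]})$ (for the $\mathbf{v}$-subproblem replace $A_1,\beta_{11},\mathbf{v}_k$ by $A_2,\beta_{22},\mathbf{u}_{k+1}$); we may assume $B$ symmetric. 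I would first record that $B$ inherits the structure of $A_1$: adding the nonnegative diagonal $\beta_{12}\,\text{diag}(\mathbf{v}_k^{[2]})$ does not change the off-diagonal entries or their zero pattern, so $B$ remains irreducible with $b_{ij}\le 0$ for $i\ne j$, and since $B\mathbf{x}\ge A_1\mathbf{x}>\mathbf{0}$ for the vector $\mathbf{x}>\mathbf{0}$ of Theorem~\ref{thm:M-matrix}(iii), $B$ is again a nonsingular $M$-matrix. Existence of a global minimizer $\mathbf{w}$ is then immediate since $g$ is continuous on the compact unit sphere; in the sequel only the irreducibility of $B$, the signs $b_{ij}\le 0\ (i\ne j)$, and $\beta_{11}>0$ are used.

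For positivity, given any feasible $\mathbf{u}$ the vector $|\mathbf{u}|=(|u_1|,\dots,|u_n|)^T$ is feasible with $\sum_i|u_i|^4=\sum_i u_i^4$, and $b_{ij}|u_i||u_j|=b_{ij}|u_iu_j|\le b_{ij}u_iu_j$ for $i\ne j$ since $b_{ij}\le 0$; hence $g(|\mathbf{u}|)\le g(\mathbf{u})$, so $|\mathbf{w}|\ge\mathbf{0}$ is again a global minimizer, which I relabel $\mathbf{w}$. As the constraint gradient never vanishes on the sphere, $\mathbf{w}$ satisfies the first-order condition $\beta_{11}\mathbf{w}^{[3]}+B\mathbf{w}=\lambda\mathbf{w}$ for some $\lambda\in\mathbb{R}$. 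If $I=\{i:w_i=0\}$ were nonempty — it is necessarily a proper subset since $\mathbf{w}\ne\mathbf{0}$ — then for $i\in I$ the $i$-th equation reads $0=(B\mathbf{w})_i=\sum_{j\notin I}b_{ij}w_j$, a sum of nonpositive terms ($b_{ij}\le 0$ for $j\ne i$, $w_j>0$), which forces $b_{ij}=0$ for all $i\in I$, $j\notin I$, contradicting the irreducibility of $B$. Hence $\mathbf{w}>\mathbf{0}$, so $g$ has a positive global minimizer.

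For uniqueness, the crux of the lemma, I would substitute $t_i=u_i^2$. The map $\mathbf{u}\mapsto\mathbf{u}^{[2]}$ is a bijection from $\{\mathbf{u}\ge\mathbf{0}:\mathbf{u}^T\mathbf{u}=1\}$ onto the simplex $\mathcal S=\{\mathbf{t}: t_i\ge 0,\ \sum_i t_i=1\}$, and for $\mathbf{u}\ge\mathbf{0}$,
\[
  g(\mathbf{u})=\widetilde g(\mathbf{u}^{[2]}),\qquad
  \widetilde g(\mathbf{t})=\frac{\beta_{11}}{2}\sum_i t_i^2+\sum_i b_{ii}t_i+\sum_{i\ne j}b_{ij}\sqrt{t_it_j}.
\]
The key point is that $\widetilde g$ is \emph{strictly convex} on $\mathcal S$: $\frac{\beta_{11}}{2}\sum_i t_i^2$ is strictly convex (here $\beta_{11}>0$ enters), $\sum_i b_{ii}t_i$ is affine, and each $\sqrt{t_it_j}$ is concave on $\{t_i,t_j\ge 0\}$ (its $2\times2$ Hessian is negative semidefinite), so $b_{ij}\sqrt{t_it_j}$ is convex because $b_{ij}\le 0$. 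Thus $\widetilde g$ has a unique minimizer $\mathbf{t}^\star$ over the compact convex set $\mathcal S$. Chaining the steps gives $\min_{\mathbf{u}^T\mathbf{u}=1}g=\min_{\mathbf{u}\ge\mathbf{0}}g=\min_{\mathcal S}\widetilde g=\widetilde g(\mathbf{t}^\star)$, so $(\mathbf{t}^\star)^{[1/2]}$ is a global minimizer of $g$ and therefore, by the positivity step, strictly positive, i.e. $\mathbf{t}^\star>\mathbf{0}$. Finally, any positive global minimizer $\mathbf{u}^*$ of $g$ satisfies $\widetilde g((\mathbf{u}^*)^{[2]})=g(\mathbf{u}^*)=\min_{\mathcal S}\widetilde g$, whence $(\mathbf{u}^*)^{[2]}=\mathbf{t}^\star$ by strict convexity and $\mathbf{u}^*=(\mathbf{t}^\star)^{[1/2]}$; this is the unique positive global optimum.

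I expect the only genuinely delicate points to be the equality chain $\min_{\mathbf{u}^T\mathbf{u}=1}g=\min_{\mathcal S}\widetilde g$ — in particular that the minimizer $\mathbf{t}^\star$ of the convex reformulation, which a priori could have some $t^\star_i=0$, is forced to lie in the relative interior of $\mathcal S$ by the irreducibility argument of the positivity step — and the elementary verification that $\sqrt{t_it_j}$ is concave. As an alternative to the convexity argument, one can instead invoke the known uniqueness of the positive ground state of a single-component BEC with positive interaction constant, since each subproblem is exactly such a problem with coefficient matrix $B$ and constant $\beta_{11}$ (respectively $\beta_{22}$).
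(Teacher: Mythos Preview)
Your proof is correct and self-contained. The paper's own proof is essentially a one-line citation: it observes that the subproblem is exactly a discretized single-component BEC problem with coefficient matrix $A_1+\beta_{12}\,\text{diag}(\mathbf{v}_k^{[2]})$ and interaction constant $\beta_{11}>0$, and then invokes \cite[Lemma~1, Remark~1]{huang2022finding} for the existence and uniqueness of the positive global minimizer---precisely the alternative you flag in your final paragraph. What your main argument supplies, and the paper omits, is an actual proof of that cited fact: the substitution $t_i=u_i^2$ converts the constrained problem into minimization of a strictly convex function over the simplex (strict convexity from the quartic term with $\beta_{11}>0$, convexity of each cross term from the concavity of $\sqrt{t_it_j}$ together with $b_{ij}\le 0$), and the irreducibility-based first-order argument excludes boundary minimizers. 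This is the classical route to uniqueness for such ground-state problems, so your proposal is more informative, though less concise, than the paper's citation; both approaches rely on exactly the same structural hypotheses (irreducibility, nonpositive off-diagonals, $\beta_{11}>0$).
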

\begin{proof}
	Due to the symmetry structure of \eqref{equ:problem}, we only need to prove it for the subproblem about $\mathbf{u}$. For any fixed $\mathbf{v}_k$, let 
	\[\tilde{f}_{\mathbf{v}_k}(\mathbf{u})=\frac{\beta_{11}}{2}\sum_{i=1}^nu_i^4+\mathbf{u}^T(A_1+\beta_{12}\text{diag}(\mathbf{v}_{k}^{[2]}))\mathbf{u}.\]
	According to \cite[Lemma 1, Remark 1]{huang2022finding}, if $\beta_{11}>0$,  $\min_{\|\mathbf{u}\|=1}\tilde{f}_{\mathbf{v}_k}(\mathbf{u})$ has a unique positive global optimum. 
\end{proof}

\begin{theorem}\label{thm:alm}
	If $\beta_{11},~\beta_{22}>0$, $A_1$ and $A_2$ are all irreducible nonsingular $M$-matrices, let $\{(\mathbf{u}_k,\mathbf{v}_k)\}$ be a sequence generated by the alternating minimization scheme \eqref{equ:ALM}. Then every limit point $(\mathbf{u}_*,\mathbf{v}_*)$ is a positive stationary point of \eqref{equ:problem}, i.e., there holds
	\[f(\mathbf{u}_*,\mathbf{v}_*)\le f(\mathbf{u},\mathbf{v}_*),~f(\mathbf{u}_*,\mathbf{v}_*)\le f(\mathbf{u}_*,\mathbf{v}),~\text{for any unit } \mathbf{u},~\mathbf{v}.\]
	Furthermore, if $\eqref{equ:CNEPv}$ has a unique positive eigenvector pair, the whole sequence converges to the global optimum of \eqref{equ:problem}.
\end{theorem}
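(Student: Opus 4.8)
The plan is to run the classical two-block alternating-minimization convergence argument, using Lemma~\ref{lem:subproblem} to supply uniqueness and positivity of the subproblem solutions.

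First I would record that the objective decreases monotonically: by the two updates in \eqref{equ:ALM}, $f(\mathbf u_{k+1},\mathbf v_{k+1})\le f(\mathbf u_{k+1},\mathbf v_k)\le f(\mathbf u_k,\mathbf v_k)$, so $\{f(\mathbf u_k,\mathbf v_k)\}$ is nonincreasing; the feasible set is a product of two unit spheres, hence compact, and $f$ is continuous, so this sequence is bounded below and converges to some $f_*$, and the sandwiched sequence $\{f(\mathbf u_{k+1},\mathbf v_k)\}$ converges to $f_*$ too. Compactness also guarantees that limit points exist, and since Lemma~\ref{lem:subproblem} makes every $\mathbf u_k,\mathbf v_k$ with $k\ge 1$ a positive vector, every limit point $(\mathbf u_*,\mathbf v_*)$ is nonnegative.

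Next, fixing a limit point $(\mathbf u_*,\mathbf v_*)=\lim_j(\mathbf u_{k_j},\mathbf v_{k_j})$, I would extract both stationarity inequalities. One of them comes for free: because the $\mathbf v$-block is updated last in \eqref{equ:ALM}, $\mathbf v_{k_j}$ minimizes $f(\mathbf u_{k_j},\cdot)$ on the sphere, and passing to the limit in $f(\mathbf u_{k_j},\mathbf v_{k_j})\le f(\mathbf u_{k_j},\mathbf v)$ gives $f(\mathbf u_*,\mathbf v_*)\le f(\mathbf u_*,\mathbf v)$ for all feasible $\mathbf v$. For the other, I would pass to a further subsequence so that $\mathbf u_{k_j+1}\to\bar{\mathbf u}$; the subproblem optimality of $\mathbf u_{k_j+1}$ with parameter $\mathbf v_{k_j}$ then yields, in the limit, that $\bar{\mathbf u}$ minimizes $f(\cdot,\mathbf v_*)$ on the sphere, and since $f(\mathbf u_{k_j+1},\mathbf v_{k_j})\to f_*$ while $f(\mathbf u_{k_j},\mathbf v_{k_j})\to f_*$, one gets $f(\bar{\mathbf u},\mathbf v_*)=f_*=f(\mathbf u_*,\mathbf v_*)$, so $\mathbf u_*$ also minimizes $f(\cdot,\mathbf v_*)$. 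Thus $(\mathbf u_*,\mathbf v_*)$ is a stationary point. To upgrade nonnegativity to strict positivity, I would use that a stationary point solves \eqref{equ:CNEPv}: if $(\mathbf u_*)_i=0$ on a nonempty proper index set $I$, reading off the $i$th row of the $\mathbf u$-equation for $i\in I$ leaves $\sum_{j\notin I}(A_1)_{ij}(\mathbf u_*)_j=0$, which, since $A_1$ is an $M$-matrix (nonpositive off-diagonal) and $(\mathbf u_*)_j>0$ for $j\notin I$, forces $(A_1)_{ij}=0$ for $i\in I,\ j\notin I$, contradicting irreducibility of $A_1$; similarly for $\mathbf v_*$ with $A_2$. (Alternatively one may cite the positivity statement underlying Lemma~\ref{lem:subproblem} from \cite{huang2022finding}.)

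Finally, if \eqref{equ:CNEPv} has a unique positive eigenvector pair, then every limit point of $\{(\mathbf u_k,\mathbf v_k)\}$ is a positive stationary point and hence, via its first-order conditions, equals that pair; a sequence in a compact set with a single limit point converges to it. To identify this limit with the global optimum of \eqref{equ:problem}, I would note that a global minimizer exists by compactness, that replacing it componentwise by its absolute value does not increase $f$ (because $|\mathbf u|^{T}A_i|\mathbf u|\le\mathbf u^{T}A_i\mathbf u$ for an $M$-matrix $A_i$ and the quartic and coupling terms are unchanged), so there is a nonnegative global minimizer, which is blockwise optimal, hence solves \eqref{equ:CNEPv}, hence is positive by the argument above, hence coincides with the unique positive eigenvector pair. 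The main obstacle I anticipate is the middle step: the limit point $(\mathbf u_*,\mathbf v_*)$ of the iterates is generally not the limit $\bar{\mathbf u}$ of the shifted subsequence, so subproblem optimality cannot be transferred directly, and it is the monotone convergence of the objective values --- together with care that both stationarity inequalities are established at the \emph{same} point --- that closes this gap.
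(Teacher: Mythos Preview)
Your proposal is correct and follows essentially the same alternating-minimization limit-point argument as the paper: monotone decrease of $f$, compactness, blockwise optimality in the limit, positivity via irreducibility of $A_1,A_2$, and the uniqueness hypothesis for the final identification with the global optimum via $f(\mathbf u,\mathbf v)\ge f(|\mathbf u|,|\mathbf v|)$. The only cosmetic difference is in the $\mathbf u$-direction: where you extract a further subsequence $\mathbf u_{k_j+1}\to\bar{\mathbf u}$ and use $f(\bar{\mathbf u},\mathbf v_*)=f_*$ to conclude $\mathbf u_*$ is optimal, the paper instead sets $\hat{\mathbf u}=\arg\min_{\|\mathbf u\|=1}f(\mathbf u,\mathbf v_*)$ and chains $f(\hat{\mathbf u},\mathbf v_{k_j})\ge f(\mathbf u_{k_j+1},\mathbf v_{k_j})\ge f(\mathbf u_{k_{j+1}},\mathbf v_{k_{j+1}})\to f(\mathbf u_*,\mathbf v_*)$, which avoids the extra subsequence extraction.
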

\begin{proof}
	According to Lemma \ref{lem:subproblem}, the generated $\{(\mathbf{u}_k,\mathbf{v}_k)\}$ sequence is bounded and positive. Thus, there exists a convergence subsequence $\{(\mathbf{u}_{k_j},\mathbf{v}_{k_j})\}$ converging to nonnegative $(\mathbf{u}_*,\mathbf{v}_*)$.
	
	Since $f(\mathbf{u}_{k+1},\mathbf{v}_{k+1})\le f(\mathbf{u}_{k+1},\mathbf{v}_{k})\le f(\mathbf{u}_{k},\mathbf{v}_{k})$ and $f(\mathbf{u},\mathbf{v})$ is bounded below over the unit spheres, $\{f(\mathbf{u}_k,\mathbf{v}_k)\}$ converges to $f(\mathbf{u}_*,\mathbf{v}_*)$. 
	
	Set
	\[\hat{\mathbf{u}}=\underset{\|\mathbf{u}\|=1}{\text{arg}\min}~ f(\mathbf{u},\mathbf{v}_*),~ \hat{\mathbf{v}}=\underset{\|\mathbf{v}\|=1}{\text{arg}\min}~ f(\mathbf{u}_*,\mathbf{v}),\]
	then we have 
	\begin{equation*}
		\left\{
		\begin{aligned}
			f(\mathbf{u}_{k_j},\hat{\mathbf{v}})&\ge f(\mathbf{u}_{k_j},\mathbf{v}_{k_j}),\\ f(\hat{\mathbf{u}},\mathbf{v}_{k_j})&\ge f(\mathbf{u}_{k_j+1},\mathbf{v}_{k_j})\ge\cdots\ge f(\mathbf{u}_{k_{j+1}},\mathbf{v}_{k_{j+1}}).
		\end{aligned}
		\right.
	\end{equation*}
	Therefore, $f(\mathbf{u}_*,\hat{\mathbf{v}})\ge f(\mathbf{u}_*,\mathbf{v}_*)$, $f(\hat{\mathbf{u}},\mathbf{v}_*)\ge f(\mathbf{u}_*,\mathbf{v}_*)$. Thus $(\mathbf{u}_*,\mathbf{v}_*)$ is a stationary point of \eqref{equ:problem} by the definition of $\hat{\mathbf{u}},~\hat{\mathbf{v}}$.
	
	On the other hand, the nonnegative stationary point $(\mathbf{u}_*,\mathbf{v}_*)$ also satisfies \eqref{equ:CNEPv}, that is, there exist $\lambda$ and $\mu$ such that
	\begin{equation*}
		\begin{aligned}
			\lambda\mathbf{u}_*=A_1\mathbf{u}_*+(\beta_{11}\text{diag}(\mathbf{u}_*^{[2]})+\beta_{12}\text{diag}(\mathbf{v}_*^{[2]}))\mathbf{u}_*,~\mathbf{u}_*^T\mathbf{u}_*=1,\\
			\mu\mathbf{v}_*=A_2\mathbf{v}_*+(\beta_{12}\text{diag}(\mathbf{u}_*^{[2]})+\beta_{22}\text{diag}(\mathbf{v}_*^{[2]}))\mathbf{v}_*,~\mathbf{v}_*^T\mathbf{v}_*=1.
		\end{aligned}
	\end{equation*}
	We now prove that $\mathbf{u}_*,~\mathbf{v}_*>0$. If there exists a nonempty subset $I\subset\{1,2,\cdots,n\}$ such that $(\mathbf{u}_*)_i=0,~\forall i\in I$, then we have $\sum_{j\notin I}(A_1)_{ij}(\mathbf{u}_*)_j=0,~\forall i\in I$. This leads to that $(A_1)_{ij}=0,~\forall i\in I,~j\notin I$, which contradicts the irreducibility of $A_1$.
	
	If the nonnegative eigenpair of CNEPv\eqref{equ:CNEPv} is unique, then it is obvious that the whole sequence $\{(\mathbf{u}_k,\mathbf{v}_k)\}$ must converge to it. This positive eigenvector is also the global optimum of \eqref{equ:problem}, since $f(\mathbf{u},\mathbf{v})\ge f(|\mathbf{u}|,|\mathbf{v}|)$.
\end{proof}

\begin{remark}
	Bao et al. \cite{bao2018mathematical} proved that if $\beta_{11}, \beta_{22}>0$ and $\beta_{11}\beta_{22}-\beta_{12}^2\ge 0$, the two-component BEC energy functional minimization problem \eqref{equ:spin-1/2} has a unique nonnegative ground state.
\end{remark}

There have been many efficient algorithms for finding the positive global optimum for the ``single-component BEC" problem, as mentioned in the introduction. It is easy to implement \eqref{equ:ALM} with these algorithms such as Newton-Root-Finding itertion (NRI) \cite{huang2022newton} to solve the subproblems. When discretized by the finite difference scheme, Liu et al. \cite{liu2020positivity,du2022newton} also proposed an efficient quadratically convergent method, the Newton-Noda iteration (NNI), to compute the positive ground state of single-component BECs. At the $k$th iteration in the alternating minimization scheme \eqref{equ:ALM}, the Newton-Noda iteration (NNI) \cite{du2022newton,huang2022newton} for the finite difference discretized ``single-component BEC" subproblem with respect to $\mathbf{u}$ is given in Algorithm \ref{alg:NNI} as follows.
\begin{algorithm}[htbp]
	\caption{NNI \cite{du2022newton,huang2022newton}}
	\label{alg:NNI}
	\begin{algorithmic}[1]
		\State Given a feasible initial point $\mathbf{u}^0>0$ with $\|\mathbf{u}^0\|=1$.  $\lambda^0=\min(\frac{\mathcal{A}_{\mathbf{v}_k}(\mathbf{u}^0)\mathbf{u}^0}{\mathbf{u}^0})$.
		\For{$l=0,1,2,\cdots$}
		\State Solve the linear system $F'_{\mathbf{v}_k}(\mathbf{u}^l,\lambda^l)\begin{bmatrix}\Delta\mathbf{u}^l\\\delta^l\end{bmatrix}=-F_{\mathbf{v}_k}(\mathbf{u}^l,\lambda^l).$
		\State Let $\theta^l=1.$
		\State Compute $\mathbf{w}^{l+1}=\mathbf{u}^l+\theta^l\Delta\mathbf{u}^l$. Normalize the vector $\hat{\mathbf{u}}^{l+1}=\frac{\mathbf{w}^{l+1}}{\|\mathbf{w}^{l+1}\|}$.
		\State Compute $h_l(\theta^l)=\mathcal{A}_{\mathbf{v}_k}(\hat{\mathbf{u}}^{l+1})\hat{\mathbf{u}}^{l+1}-\lambda^l\hat{\mathbf{u}}^{l+1}$
		\While{$h_l(\theta^l)\not> 0$}
		\State $\theta^l=\frac{\theta^l}{2}$, go back to step 5.
		\EndWhile
		\State $\mathbf{u}^{l+1}=\hat{\mathbf{u}}^{l+1}$, compute $\lambda^{l+1}=\min(\frac{\mathcal{A}_{\mathbf{v}_k}(\mathbf{u}^{l+1})\mathbf{u}^{l+1}}{\mathbf{u}^{l+1}})$.
		\EndFor
		\State\Return $\mathbf{u}_{k+1}=\mathbf{u}^{l}$.
	\end{algorithmic}
\end{algorithm}

\section{Alternating Newton-Noda iteration}\label{sec:ANNI}
In \eqref{equ:ALM}, it may take too much time for solving subproblems. A natural idea for potential improvement is to solve each subproblem inexactly to some extent. For instance, one may wonder whether several steps or even only one step NNI of Algorithm \ref{alg:NNI} is enough, instead of solving the subproblem completely every time. On the other hand, NNI \cite{du2022newton} requires $A_1$ and $A_2$ to be $M$-matrices in order for the convergence solving subproblems in \eqref{equ:ALM}. However, within some scenarios like BEC problems, for accuracy reasons, pseudo-spectral approximation schemes are used. It leads to the constraints problem \eqref{equ:problem}, where $A_1$ and $A_2$ cease to be $M$-matrices. Therefore, NNI itself needs to be modifed for more general applications.

In this section, motivated by NNI, we present an alternating Newton-Noda iteration (ANNI), which only conduct one-step modified Newton-Noda step for $\mathbf{u}$ and $\mathbf{v}$ alternatively in each iteration. Meanwhile, it still guarantees sufficient descent of the objective value and leads to the global convergence even without the finite difference discretization.

In NNI, a key idea is to choose $\theta$ satisfying $h(\theta)>0$ as the step 7 of Algorithm \ref{alg:NNI}, so that the sequence $\{\lambda^l\}$ is strictly increasing and finally converges to an eigenvalue of the NEPv. However, it is unknown that whether a one-step NNI is enough, or even whether such a $\theta$ satisfying $h(\theta)>0$ still exists when $A_1$ and $A_2$ are not $M$-matrices. Motivated by this, we wonder whether an appreciate $\theta$ can be selected and lead to the sufficient descent of the objective value, instead of forcing $h(\theta)>0$. The ANNI for \eqref{equ:problem} is presented below in Algorithm \ref{alg:ANNI}. For each inner one-step modified Newton-Noda step, in addition to the different condition for choosing $\theta$, another difference with the NNI is that we also modify the strategy for selecting $\lambda$ to guarantee the convergence.
\begin{algorithm}[htb]
	\caption{Alternating Newton-Noda iteration (ANNI)}\label{alg:ANNI}
	\begin{algorithmic}[1]
		\State Given feasible initial points $\mathbf{u}_0,~\mathbf{v}_0>0$ with $\|\mathbf{u}_0\|=1,~\|\mathbf{v}_0\|=1$,  $\tau_1<\tau_2<\min\{\lambda_{\min}( \mathcal{A}_{\mathbf{v}}(\mathbf{u})),\lambda_{\min}( \mathcal{A}_{\mathbf{u}}(\mathbf{v}))\}$ for all unit $\mathbf{u},\mathbf{v}$.
		\For{$k=0,1,2,\cdots$}
		\State (One-step modified Newton-Noda iteration for $\mathbf{u}$, step 4.-step 11.)
		\State Select $\lambda_k\in[\tau_1,\tau_2]$.
		\State Solve the linear system $F'_{\mathbf{v}_k}(\mathbf{u}_k,\lambda_k)\begin{bmatrix}\Delta\mathbf{u}_k\\\delta_k\end{bmatrix}=-F_{\mathbf{v}_k}(\mathbf{u}_k,\lambda_k).$
		\State Let $\theta_k=1.$
		\State Compute $\mathbf{w}_{k+1}=\mathbf{u}_k+\theta_k\Delta\mathbf{u}_k$
		and  $d_k(\theta_k)=f_{\mathbf{v}_k}(\frac{\mathbf{w}_{k+1}}{\|\mathbf{w}_{k+1}\|})-f_{\mathbf{v}_k}(\mathbf{u}_k)$.
		\While{$d_k(\theta_k)\ge 0$}
		\State $\theta_k=\frac{\theta_k}{2}$, go back to step 7.
		\EndWhile
		\State $\mathbf{u}_{k+1}=\mathbf{w}_{k+1}/\|\mathbf{w}_{k+1}\|$.
		
		\State (One-step modifed Newton-Noda iteration for $\mathbf{v}$, step 13.-step 20.)
		\State Select $\mu_k\in[\tau_1,\tau_2]$.
		\State Solve the linear system $F'_{\mathbf{u}_{k+1}}(\mathbf{v}_k,\mu_k)\begin{bmatrix}\Delta\mathbf{v}_k\\\delta_k\end{bmatrix}=-F_{\mathbf{u}_{k+1}}(\mathbf{v}_k,\mu_k).$
		\State Let $\theta_k=1.$
		\State Compute $\mathbf{z}_{k+1}=\mathbf{v}_k+\theta_k\Delta\mathbf{v}_k$ and  $d_k(\theta_k)=f_{\mathbf{u}_{k+1}}(\frac{\mathbf{z}_{k+1}}{\|\mathbf{z}_{k+1}\|})-f_{\mathbf{u}_{k+1}}(\mathbf{v}_k)$.
		\While{$d_k(\theta_k)\ge 0$}
		\State $\theta_k=\frac{\theta_k}{2}$, go back to step 16.
		\EndWhile
		\State $\mathbf{v}_{k+1}=\mathbf{z}_{k+1}/\|\mathbf{z}_{k+1}\|$.
		\EndFor
	\end{algorithmic}
\end{algorithm}

\subsection{Properties of $\mathbf{u}_k$ and $\mathbf{v}_k$}
Suppose the sequence$\{(\mathbf{u}_k,\mathbf{v}_k,\lambda_k,\mu_k)\}$ is generated by Algorithm \ref{alg:ANNI}. Due to the symmetry of $\mathbf{u}$ and $\mathbf{v}$ in \eqref{equ:problem}, we only need to derive the property on $\mathbf{u}$ without statement otherwise, and the result with respect to $\mathbf{v}$ follows. First, let us explore some basic properties of the linear system
\begin{equation}\label{equ:linear}
	F'_{\mathbf{v}}(\mathbf{u},\lambda)\begin{bmatrix}\Delta\mathbf{u}\\\delta\end{bmatrix}=\begin{bmatrix}-\mathbf{r}_{\mathbf{v}}(\mathbf{u},\lambda)\\0\end{bmatrix}.
\end{equation}
It is obvious to obtain that
\begin{equation}\label{equ:relation}
	\begin{aligned}	J_{\mathbf{v}}(\mathbf{u},\lambda)\Delta\mathbf{u}&=\delta\mathbf{u}-\mathbf{r}_{\mathbf{v}}(\mathbf{u},\lambda),\\
		\mathbf{u}^{T}\Delta\mathbf{u}&=0,\\
		J_{\mathbf{v}}(\mathbf{u},\lambda)(\mathbf{u}+\Delta\mathbf{u})&=\delta\mathbf{u}+2\beta_{11}\mathbf{u}^{[3]}.
	\end{aligned}
\end{equation}

\begin{lemma}\label{lem:deltabound}
	There exists positive constant $\gamma$, such that
	$\lambda_{\min}(J_{\mathbf{v}_k}(\mathbf{u}_k,\lambda_k))\ge \gamma$ for any $k$. 
	Suppose $\delta_k$ and $\Delta\mathbf{u}_k$ are generated by Algorithm \ref{alg:ANNI}, then 
	\begin{equation}\label{equ:delta2}
		\delta_k=\frac{\mathbf{u}_k^TJ_{\mathbf{v}_k}^{-1}(\mathbf{u}_k,\lambda_k)\mathbf{r}_{\mathbf{v}_k}(\mathbf{u}_k,\lambda_k)}{\mathbf{u}_k^TJ_{\mathbf{v}_k}^{-1}(\mathbf{u}_k,\lambda_k)\mathbf{u}_k},
	\end{equation}
	and 
	\begin{equation}\label{equ:Delta2}
		\Delta\mathbf{u}_k = \delta_kJ_{\mathbf{v}_k}^{-1}(\mathbf{u}_k,\lambda_k)\mathbf{u}_k-J_{\mathbf{v}_k}^{-1}(\mathbf{u}_k,\lambda_k)\mathbf{r}_{\mathbf{v}_k}(\mathbf{u}_k,\lambda_k)
	\end{equation}
	are bounded. 
\end{lemma}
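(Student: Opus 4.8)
The plan is to prove the two assertions of Lemma~\ref{lem:deltabound} in turn: first a uniform lower bound on $\lambda_{\min}(J_{\mathbf{v}_k}(\mathbf{u}_k,\lambda_k))$, and then, using that bound, the boundedness of $\delta_k$ and $\Delta\mathbf{u}_k$ together with the explicit formulas \eqref{equ:delta2}--\eqref{equ:Delta2}.

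\textbf{Step 1: uniform positive-definiteness of $J_{\mathbf{v}_k}(\mathbf{u}_k,\lambda_k)$.} Recall from the preliminaries that
\[
J_{\mathbf{v}}(\mathbf{u},\lambda)=\mathcal{A}_{\mathbf{v}}(\mathbf{u})-\lambda I+2\beta_{11}\mathrm{diag}(\mathbf{u}^{[2]}).
\]
Since $\mathbf{u}_k$ is a unit vector and $\beta_{11}>0$, the term $2\beta_{11}\mathrm{diag}(\mathbf{u}_k^{[2]})$ is positive semidefinite, so
\[
\lambda_{\min}(J_{\mathbf{v}_k}(\mathbf{u}_k,\lambda_k))\ge \lambda_{\min}(\mathcal{A}_{\mathbf{v}_k}(\mathbf{u}_k))-\lambda_k.
\]
By the choice of $\lambda_k$ in Algorithm~\ref{alg:ANNI} we have $\lambda_k\le\tau_2<\lambda_{\min}(\mathcal{A}_{\mathbf{v}}(\mathbf{u}))$ for every unit pair $\mathbf{u},\mathbf{v}$; hence
\[
\lambda_{\min}(J_{\mathbf{v}_k}(\mathbf{u}_k,\lambda_k))\ge \lambda_{\min}(\mathcal{A}_{\mathbf{v}_k}(\mathbf{u}_k))-\tau_2=:\gamma>0,
\]
where $\gamma$ is independent of $k$. (One should note that $\lambda_{\min}(\mathcal{A}_{\mathbf{v}}(\mathbf{u}))$ is a continuous function of the unit pair $(\mathbf{u},\mathbf{v})$ over a compact set, so the infimum defining the bound in Step~1 of the algorithm is attained; this guarantees $\tau_1,\tau_2$ with $\tau_2$ strictly below it exist, and therefore $\gamma>0$.)

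\textbf{Step 2: the formula for $\delta_k$ and the boundedness.} From the first two equations of \eqref{equ:relation}, $J_{\mathbf{v}_k}(\mathbf{u}_k,\lambda_k)$ is invertible by Step~1, so
\[
\Delta\mathbf{u}_k=\delta_k J_{\mathbf{v}_k}^{-1}(\mathbf{u}_k,\lambda_k)\mathbf{u}_k-J_{\mathbf{v}_k}^{-1}(\mathbf{u}_k,\lambda_k)\mathbf{r}_{\mathbf{v}_k}(\mathbf{u}_k,\lambda_k),
\]
which is exactly \eqref{equ:Delta2}. Imposing $\mathbf{u}_k^T\Delta\mathbf{u}_k=0$ and solving for $\delta_k$ gives \eqref{equ:delta2}, provided the denominator $\mathbf{u}_k^TJ_{\mathbf{v}_k}^{-1}(\mathbf{u}_k,\lambda_k)\mathbf{u}_k$ is nonzero — which it is, since $J_{\mathbf{v}_k}^{-1}$ is symmetric positive definite by Step~1, so the denominator is at least $\|\mathbf{u}_k\|^2/\lambda_{\max}(J_{\mathbf{v}_k})\ge 1/\lambda_{\max}(J_{\mathbf{v}_k})>0$. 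For boundedness: $\|\mathcal{A}_{\mathbf{v}_k}(\mathbf{u}_k)\|$ and hence $\|J_{\mathbf{v}_k}(\mathbf{u}_k,\lambda_k)\|$ are uniformly bounded above (the diagonal entries $u_i^2, v_i^2$ lie in $[0,1]$, $\|A_1\|$ is fixed, and $|\lambda_k|\le\max\{|\tau_1|,|\tau_2|\}$), so $\|\mathbf{r}_{\mathbf{v}_k}(\mathbf{u}_k,\lambda_k)\|=\|J_{\mathbf{v}_k}\mathbf{u}_k-2\beta_{11}\mathbf{u}_k^{[3]}\|$ (using \eqref{equ:Jrelation}) is uniformly bounded; combining $\|J_{\mathbf{v}_k}^{-1}\|\le 1/\gamma$ with the upper bounds on $\|J_{\mathbf{v}_k}^{-1}\|$ in the denominator's lower bound, \eqref{equ:delta2} shows $|\delta_k|$ is uniformly bounded, and then \eqref{equ:Delta2} shows $\|\Delta\mathbf{u}_k\|$ is uniformly bounded.

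The only genuine subtlety — the ``hard part,'' though it is more a matter of care than of difficulty — is justifying that the constants implicit in Step~1 of the algorithm (the quantity $\min\{\lambda_{\min}(\mathcal{A}_{\mathbf{v}}(\mathbf{u})),\lambda_{\min}(\mathcal{A}_{\mathbf{u}}(\mathbf{v}))\}$ over all unit $\mathbf{u},\mathbf{v}$) is a genuine positive number, i.e. that this minimum is positive and attained; this follows from continuity of $\lambda_{\min}(\cdot)$ and compactness of the product of unit spheres, together with the hypothesis (used elsewhere in the paper) that $A_1,A_2$ are positive definite $M$-matrices, which makes $\mathcal{A}_{\mathbf{v}}(\mathbf{u})=\beta_{11}\mathrm{diag}(\mathbf{u}^{[2]})+A_1+\beta_{12}\mathrm{diag}(\mathbf{v}^{[2]})$ positive definite. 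Everything else is a direct computation from \eqref{equ:relation}, \eqref{equ:Jrelation} and elementary norm estimates.
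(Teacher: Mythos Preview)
Your proof is correct and follows essentially the same approach as the paper: the uniform lower bound $\gamma$ comes from $\lambda_k\le\tau_2$ being strictly below $\lambda_{\min}(\mathcal{A}_{\mathbf{v}_k}(\mathbf{u}_k))$ together with the positive semidefiniteness of $2\beta_{11}\mathrm{diag}(\mathbf{u}_k^{[2]})$, and the formulas \eqref{equ:delta2}--\eqref{equ:Delta2} are read off from \eqref{equ:relation} once $J_{\mathbf{v}_k}$ is known to be invertible. The paper derives those formulas by writing out the full block inverse of $F'_{\mathbf{v}}(\mathbf{u},\lambda)$, whereas you invert only the first row of \eqref{equ:relation} and substitute into the orthogonality condition; these are equivalent, and your route is marginally more direct. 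Your boundedness argument is also spelled out in more detail than the paper's one-line ``then follows.''

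Two small cosmetic points. First, the line $\lambda_{\min}(\mathcal{A}_{\mathbf{v}_k}(\mathbf{u}_k))-\tau_2=:\gamma$ is notationally awkward because the middle expression depends on $k$; you should define $\gamma:=\inf_{\|\mathbf{u}\|=\|\mathbf{v}\|=1}\lambda_{\min}(\mathcal{A}_{\mathbf{v}}(\mathbf{u}))-\tau_2$, which is what your parenthetical already justifies. Second, your final paragraph invokes the hypothesis that $A_1,A_2$ are positive-definite $M$-matrices to ensure the infimum is positive, but that hypothesis is not in force for this lemma and is not needed: all that matters is that the infimum is \emph{attained} (compactness) so that $\tau_2$ can be chosen strictly below it, making $\gamma>0$ regardless of the sign of the infimum itself.
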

\begin{proof}
	Since $\lambda_k\le\tau_2<\lambda_{\min}(\mathcal{A}_{\mathbf{v}_k}(\mathbf{u}_k))$ for any $k$, $J_{\mathbf{v}_k}(\mathbf{u}_k,\lambda_k)=\mathcal{A}_{\mathbf{v}_k}(\mathbf{u}_k)-\lambda_k I+2\beta_{11}\text{diag}(\mathbf{u}_k^{[2]})$ is always positive semidifinite and $\lambda_{\min}(J_{\mathbf{v}_k}(\mathbf{u}_k,\lambda_k))\ge \gamma>0$ for some $\gamma$. Then
	\[F_{\mathbf{v}}'(\mathbf{u},\lambda)=\begin{bmatrix}I&0\\-\mathbf{u}^T(J_{\mathbf{v}}(\mathbf{u},\lambda))^{-1}&1\end{bmatrix}\begin{bmatrix}J_{\mathbf{v}}(\mathbf{u},\lambda)&-\mathbf{u}\\0&-\mathbf{u}^T(J_{\mathbf{v}}(\mathbf{u},\lambda))^{-1}\mathbf{u}\end{bmatrix}\]
	is nonsigular and 
	\begin{equation*}\label{equ:inverse}
		(F'_{\mathbf{v}}(\mathbf{u},\lambda))^{-1}=
		\begin{bmatrix}
			(J_{\mathbf{v}}(\mathbf{u},\lambda))^{-1}-\frac{(J_{\mathbf{v}}(\mathbf{u},\lambda))^{-1}\mathbf{u}\mathbf{u}^T(J_{\mathbf{v}}(\mathbf{u},\lambda))^{-1}}{\mathbf{u}^T(J_{\mathbf{v}}(\mathbf{u},\lambda))^{-1}\mathbf{u}} & -\frac{(J_{\mathbf{v}}(\mathbf{u},\lambda))^{-1}\mathbf{u}}{\mathbf{u}^T(J_{\mathbf{v}}(\mathbf{u},\lambda))^{-1}\mathbf{u}}\\
			-\frac{\mathbf{u}^T(J_{\mathbf{v}}(\mathbf{u},\lambda))^{-1}}{\mathbf{u}^T(J_{\mathbf{v}}(\mathbf{u},\lambda))^{-1}\mathbf{u}} & -\frac{1}{\mathbf{u}^T(J_{\mathbf{v}}(\mathbf{u},\lambda))^{-1}\mathbf{u}}
		\end{bmatrix}.
	\end{equation*}
	Combined with \eqref{equ:relation}, we have 
	\begin{equation*}\label{equ:delta}
		\delta_k=\frac{\mathbf{u}_k^TJ_{\mathbf{v}_k}^{-1}(\mathbf{u}_k,\lambda_k)\mathbf{r}_{\mathbf{v}_k}(\mathbf{u}_k,\lambda_k)}{\mathbf{u}_k^TJ_{\mathbf{v}_k}^{-1}(\mathbf{u}_k,\lambda_k)\mathbf{u}_k},
	\end{equation*} 
	\begin{equation*}\label{equ:Delta}
		\Delta\mathbf{u}_k = \delta_kJ_{\mathbf{v}_k}^{-1}(\mathbf{u}_k,\lambda_k)\mathbf{u}_k-J_{\mathbf{v}_k}^{-1}(\mathbf{u}_k,\lambda_k)\mathbf{r}_{\mathbf{v}_k}(\mathbf{u}_k,\lambda_k).
	\end{equation*}
	The boundedness of $\delta_k$ and $\Delta\mathbf{u}_k$ then follows.
\end{proof}

\begin{lemma}\label{lem:delta}
	Suppose $\delta_k$ is generated by Algorithm \ref{alg:ANNI}, and furthermore
	\begin{equation} \label{equ:tau_for_positive}
		\tau_2<\min\{\lambda_{\min}( \mathcal{A}_{\mathbf{v}}(\mathbf{u})),\lambda_{\min}( \mathcal{A}_{\mathbf{u}}(\mathbf{v}))\}-2\max\{\beta_{11},\beta_{22}\}
	\end{equation}
 for any unit $\mathbf{u},~\mathbf{v}$, then $\delta_k>0$.
\end{lemma}
\begin{proof}
	According to \eqref{equ:Jrelation}, we have that
	\begin{equation*}
		\begin{aligned}
			\mathbf{u}_k^TJ_{\mathbf{v}_k}^{-1}(\mathbf{u}_k,\lambda_k)\mathbf{r}_{\mathbf{v}_k}(\mathbf{u}_k,\lambda_k)&=\mathbf{u}_k^TJ_{\mathbf{v}_k}^{-1}(\mathbf{u}_k,\lambda_k)(J_{\mathbf{v}_k}(\mathbf{u}_k,\lambda_k)\mathbf{u}_k-2\beta_{11}\mathbf{u}_k^{[3]})\\
			&=1-\mathbf{u}_k^TJ_{\mathbf{v}_k}^{-1}(\mathbf{u}_k,\lambda_k)2\beta_{11}\text{diag}(\mathbf{u}_k^{[2]})\mathbf{u}_k.\\
		\end{aligned}
	\end{equation*}
Combined with \eqref{equ:tau_for_positive}, we have 
\begin{equation*}
	\begin{aligned}
	\lambda_{\min}(J_{\mathbf{v}_k}(\mathbf{u}_k,\lambda_k))&=\lambda_{\min}\left(\mathcal{A}_{\mathbf{v}_k}(\mathbf{u}_k)-\lambda_kI+2\beta_{11}\text{diag}(\mathbf{u}_k^{[2]})\right)\\
	&\ge \lambda_{\min}(\mathcal{A}_{\mathbf{v}_k}(\mathbf{u}_k))-\lambda_k\ge \lambda_{\min}(\mathcal{A}_{\mathbf{v}_k}(\mathbf{u}_k))-\tau_2\\
	&> 2\beta_{11}.
	\end{aligned}
\end{equation*}
Then $\mathbf{u}_k^TJ_{\mathbf{v}_k}^{-1}(\mathbf{u}_k,\lambda_k)\mathbf{r}_{\mathbf{v}_k}(\mathbf{u}_k,\lambda_k)>0$. From \eqref{equ:delta2}, we get $\delta_k>0$.
\end{proof}

\begin{remark}
	\eqref{equ:tau_for_positive} is only a sufficient condition for making $\delta_k\ge 0$. For instance, from the proof of Lemma \ref{lem:M of J}, if $J_{\mathbf{v}_k}(\mathbf{u}_k,\lambda_k)$ is a nonsingular $M$-matrix, $\delta_k\ge 0$ with $\lambda_k=\min\left(\mathcal{A}_{\mathbf{v}_k}(\mathbf{u}_k)\mathbf{u}_k/\mathbf{u}_k\right)$. If $J_{\mathbf{v}_k}^{-1}(\mathbf{u}_k,\lambda_k)2\beta_{11}\text{diag}(\mathbf{u}_k^{[2]})$ is still a symmetric matrix, then $\delta_k$ is also positive from Lemma \ref{lem:matrix analysis} without \eqref{equ:tau_for_positive}.
\end{remark}

Through \eqref{equ:relation} and the positive definiteness of $J_{\mathbf{v}_k}(\mathbf{u}_k,\lambda_k)$, it is obvious to obtain the following lemma.
\begin{lemma}\label{lem:urelation}
	$\Delta\mathbf{u}_k=0$ if and only if $\mathbf{u}_k$ is an eigenvector of $\mathcal{A}_{\mathbf{v}_k}(\mathbf{u_k})$.
\end{lemma}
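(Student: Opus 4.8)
\textbf{Proof proposal for Lemma \ref{lem:urelation}.}

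The plan is to read off the equivalence directly from the characterizations \eqref{equ:relation} of the Newton direction, using the strict positive definiteness of $J_{\mathbf{v}_k}(\mathbf{u}_k,\lambda_k)$ established in Lemma \ref{lem:deltabound} (namely $\lambda_{\min}(J_{\mathbf{v}_k}(\mathbf{u}_k,\lambda_k))\ge\gamma>0$), so that $J_{\mathbf{v}_k}(\mathbf{u}_k,\lambda_k)$ is invertible and $\mathbf{u}_k^T J_{\mathbf{v}_k}^{-1}(\mathbf{u}_k,\lambda_k)\mathbf{u}_k>0$.

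First I would prove the forward direction. Suppose $\Delta\mathbf{u}_k=0$. The first equation of \eqref{equ:relation} then gives $0=\delta_k\mathbf{u}_k-\mathbf{r}_{\mathbf{v}_k}(\mathbf{u}_k,\lambda_k)$, i.e. $\mathbf{r}_{\mathbf{v}_k}(\mathbf{u}_k,\lambda_k)=\delta_k\mathbf{u}_k$. By the definition of $\mathbf{r}_{\mathbf{v}_k}$ in \eqref{equ:F}, this reads $\mathcal{A}_{\mathbf{v}_k}(\mathbf{u}_k)\mathbf{u}_k-\lambda_k\mathbf{u}_k=\delta_k\mathbf{u}_k$, hence $\mathcal{A}_{\mathbf{v}_k}(\mathbf{u}_k)\mathbf{u}_k=(\lambda_k+\delta_k)\mathbf{u}_k$, so $\mathbf{u}_k$ is an eigenvector of $\mathcal{A}_{\mathbf{v}_k}(\mathbf{u}_k)$ with eigenvalue $\lambda_k+\delta_k$.

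Conversely, suppose $\mathbf{u}_k$ is an eigenvector of $\mathcal{A}_{\mathbf{v}_k}(\mathbf{u}_k)$, say $\mathcal{A}_{\mathbf{v}_k}(\mathbf{u}_k)\mathbf{u}_k=\nu\mathbf{u}_k$ for some scalar $\nu$. Then $\mathbf{r}_{\mathbf{v}_k}(\mathbf{u}_k,\lambda_k)=(\nu-\lambda_k)\mathbf{u}_k$, so from \eqref{equ:delta2} we get $\delta_k=\nu-\lambda_k$, and from \eqref{equ:Delta2},
\[
\Delta\mathbf{u}_k=\delta_k J_{\mathbf{v}_k}^{-1}(\mathbf{u}_k,\lambda_k)\mathbf{u}_k-(\nu-\lambda_k)J_{\mathbf{v}_k}^{-1}(\mathbf{u}_k,\lambda_k)\mathbf{u}_k=0.
\]
Alternatively, one can argue without the explicit formulas: from the first and second equations of \eqref{equ:relation}, $J_{\mathbf{v}_k}(\mathbf{u}_k,\lambda_k)\Delta\mathbf{u}_k=\delta_k\mathbf{u}_k-\mathbf{r}_{\mathbf{v}_k}(\mathbf{u}_k,\lambda_k)=(\delta_k-\nu+\lambda_k)\mathbf{u}_k$, and pairing with $\Delta\mathbf{u}_k$ while using $\mathbf{u}_k^T\Delta\mathbf{u}_k=0$ gives $\Delta\mathbf{u}_k^T J_{\mathbf{v}_k}(\mathbf{u}_k,\lambda_k)\Delta\mathbf{u}_k=0$, whence $\Delta\mathbf{u}_k=0$ by positive definiteness. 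I do not expect any real obstacle here — the statement is a short algebraic consequence of \eqref{equ:relation} and the positive definiteness already in hand; the only point to be careful about is to invoke Lemma \ref{lem:deltabound} (rather than Lemma \ref{lem:matrix analysis} directly) for the strict positive definiteness of $J_{\mathbf{v}_k}$, and to note the symmetric statement for $\mathbf{v}_k$ follows verbatim.
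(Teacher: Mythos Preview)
Your proposal is correct and follows exactly the approach the paper indicates: the paper simply remarks that the lemma is obvious from \eqref{equ:relation} together with the positive definiteness of $J_{\mathbf{v}_k}(\mathbf{u}_k,\lambda_k)$, and you have spelled out precisely those details (including the alternative argument via $\Delta\mathbf{u}_k^T J_{\mathbf{v}_k}(\mathbf{u}_k,\lambda_k)\Delta\mathbf{u}_k=0$). Nothing further is needed.
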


For the two-component BEC problem \eqref{equ:problem} discretized by the finite difference method, both $A_1$ and $A_2$ are irreducible nonsingular $M$-matrices. In this case, ANNI further preserves the positivity of $\mathbf{u}_k$, $\mathbf{v}_k$.
\begin{theorem}(The positivity preserving property)\label{thm:positivity}
	If both $A_1$ and $A_2$ are irreducible nonsingular $M$-matrices and $\tau_2$ satisfies \eqref{equ:tau_for_positive}, then for any $\theta_k\in(0,1]$, $\mathbf{u}_k,~\mathbf{v}_k>0$.
\end{theorem}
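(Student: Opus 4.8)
The plan is to prove the positivity claim by induction on $k$, using at each step the structure of the Newton update $\mathbf{w}_{k+1}=\mathbf{u}_k+\theta_k\Delta\mathbf{u}_k$ together with the third identity in \eqref{equ:relation}, namely $J_{\mathbf{v}_k}(\mathbf{u}_k,\lambda_k)(\mathbf{u}_k+\Delta\mathbf{u}_k)=\delta_k\mathbf{u}_k+2\beta_{11}\mathbf{u}_k^{[3]}$. The base case $k=0$ is the hypothesis $\mathbf{u}_0,\mathbf{v}_0>0$. For the inductive step, assume $\mathbf{u}_k>0$ and $\mathbf{v}_k>0$; I want to conclude $\mathbf{u}_{k+1}>0$, and then $\mathbf{v}_{k+1}>0$ follows by the same argument applied to the $\mathbf{v}$-block (with $\mathbf{u}_{k+1}>0$ now available). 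Since normalization by a positive scalar preserves positivity, it suffices to show $\mathbf{w}_{k+1}>0$ for every $\theta_k\in(0,1]$ that the backtracking loop can produce.

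First I would record the key sign facts already available: by Lemma \ref{lem:delta}, $\delta_k>0$; by Lemma \ref{lem:deltabound}, $J_{\mathbf{v}_k}(\mathbf{u}_k,\lambda_k)$ is positive definite with $\lambda_{\min}\ge\gamma>0$. Crucially, under the $M$-matrix hypothesis, $J_{\mathbf{v}_k}(\mathbf{u}_k,\lambda_k)=\mathcal{A}_{\mathbf{v}_k}(\mathbf{u}_k)-\lambda_k I+2\beta_{11}\operatorname{diag}(\mathbf{u}_k^{[2]})$ has the form $(\text{diagonal})-(\text{nonnegative off-diagonal})$: indeed $\mathcal{A}_{\mathbf{v}_k}(\mathbf{u}_k)=\beta_{11}\operatorname{diag}(\mathbf{u}_k^{[2]})+A_1+\beta_{12}\operatorname{diag}(\mathbf{v}_k^{[2]})$, and $A_1$ being an $M$-matrix means its off-diagonal entries are $\le 0$, so $-A_1$ has nonnegative off-diagonal part and the diagonal shifts do not affect off-diagonal entries; subtracting $\lambda_k I$ only changes the diagonal. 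Being symmetric positive definite with nonpositive off-diagonal entries, $J_{\mathbf{v}_k}(\mathbf{u}_k,\lambda_k)$ is itself a nonsingular $M$-matrix, hence by Theorem \ref{thm:M-matrix}(ii) its inverse is nonnegative; and since $A_1$ is irreducible, so is $J_{\mathbf{v}_k}$, so by the last sentence of Theorem \ref{thm:M-matrix} we in fact get $J_{\mathbf{v}_k}^{-1}(\mathbf{u}_k,\lambda_k)>0$ (entrywise strictly positive).

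Now I would chain these together. From the third line of \eqref{equ:relation},
\[
\mathbf{w}_{k+1}=\mathbf{u}_k+\theta_k\Delta\mathbf{u}_k=(1-\theta_k)\mathbf{u}_k+\theta_k(\mathbf{u}_k+\Delta\mathbf{u}_k)=(1-\theta_k)\mathbf{u}_k+\theta_k\,J_{\mathbf{v}_k}^{-1}(\mathbf{u}_k,\lambda_k)\bigl(\delta_k\mathbf{u}_k+2\beta_{11}\mathbf{u}_k^{[3]}\bigr).
\]
For $\theta_k\in(0,1)$, the first term $(1-\theta_k)\mathbf{u}_k$ is nonnegative (strictly positive if $\mathbf{u}_k>0$), and the vector $\delta_k\mathbf{u}_k+2\beta_{11}\mathbf{u}_k^{[3]}$ is a strictly positive vector because $\delta_k>0$, $\beta_{11}>0$, $\mathbf{u}_k>0$; multiplying a strictly positive vector by the entrywise-positive matrix $J_{\mathbf{v}_k}^{-1}$ yields a strictly positive vector, so $\mathbf{w}_{k+1}>0$. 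For the boundary value $\theta_k=1$ the first term vanishes but the second term alone is already strictly positive, so $\mathbf{w}_{k+1}>0$ there too. Hence $\mathbf{w}_{k+1}>0$ for all admissible $\theta_k\in(0,1]$, and after normalization $\mathbf{u}_{k+1}>0$. Running the identical argument on the $\mathbf{v}$-subproblem with $J_{\mathbf{u}_{k+1}}(\mathbf{v}_k,\mu_k)$ (now using that $A_2$ is an irreducible nonsingular $M$-matrix and $\mu_k<\lambda_{\min}(\mathcal{A}_{\mathbf{u}_{k+1}}(\mathbf{v}_k))$, plus $\delta_k$ for the $\mathbf{v}$-step positive by the analogue of Lemma \ref{lem:delta}) gives $\mathbf{v}_{k+1}>0$, completing the induction.

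The main obstacle, and the step I would be most careful about, is verifying that $J_{\mathbf{v}_k}(\mathbf{u}_k,\lambda_k)$ genuinely qualifies as an \emph{irreducible} nonsingular $M$-matrix so that $J_{\mathbf{v}_k}^{-1}>0$ strictly, rather than merely $\ge 0$: if one only has $J_{\mathbf{v}_k}^{-1}\ge 0$, then for $\theta_k=1$ one gets $\mathbf{w}_{k+1}\ge 0$ but a priori not strictly positive. This is where the irreducibility of $A_1$ (and $A_2$) is essential — it transfers to $J_{\mathbf{v}_k}$ because adding diagonal matrices and subtracting $\lambda_k I$ does not alter which off-diagonal entries are nonzero — and then the symmetric-positive-definite-irreducible case of Theorem \ref{thm:M-matrix} upgrades nonnegativity of the inverse to strict positivity. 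The remaining point worth a line is that the backtracking \textbf{while} loop in Algorithm \ref{alg:ANNI} always terminates with some $\theta_k>0$: since $\Delta\mathbf{u}_k=0$ would mean $\mathbf{u}_k$ is already an eigenvector (Lemma \ref{lem:urelation}), in the nontrivial case $\Delta\mathbf{u}_k\neq 0$ and one argues $d_k(\theta_k)<0$ for all sufficiently small $\theta_k$ via a first-order expansion of $f_{\mathbf{v}_k}$ along the normalized Newton direction — but this is a descent/well-definedness issue that I expect is handled separately, so for the positivity statement I would simply take as given that $\theta_k\in(0,1]$.
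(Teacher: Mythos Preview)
Your proof is correct and follows essentially the same approach as the paper: both use the identity $\mathbf{u}_k+\Delta\mathbf{u}_k=J_{\mathbf{v}_k}^{-1}(\mathbf{u}_k,\lambda_k)(\delta_k\mathbf{u}_k+2\beta_{11}\mathbf{u}_k^{[3]})$ from \eqref{equ:relation}, argue that $J_{\mathbf{v}_k}$ inherits the irreducible $M$-matrix structure from $A_1$ so that $J_{\mathbf{v}_k}^{-1}>0$ via Theorem~\ref{thm:M-matrix}, and combine this with $\delta_k>0$ (Lemma~\ref{lem:delta}) and the induction hypothesis $\mathbf{u}_k>0$. Your version is simply more explicit about the convex-combination decomposition $\mathbf{w}_{k+1}=(1-\theta_k)\mathbf{u}_k+\theta_k(\mathbf{u}_k+\Delta\mathbf{u}_k)$ and about why irreducibility transfers to $J_{\mathbf{v}_k}$, points the paper leaves implicit.
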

\begin{proof}
	From \eqref{equ:relation}, one has
	\[\mathbf{u}_k+\Delta\mathbf{u}_k=J^{-1}_{\mathbf{v}_k}(\mathbf{u}_k,\lambda_k)(\delta_k\mathbf{u}_k+2\beta_{11}\mathbf{u}_k^{[3]}).\]
	Since $A_1$ is an irreducible nonsingular M-matrix, $J_{\mathbf{v}_k}(\mathbf{u}_k,\lambda_k)$ is also an irreducible positive definite $M$-matrix, and $J^{-1}_{\mathbf{v}_k}(\mathbf{u}_k,\lambda_k)>0$ according to Theorem \ref{thm:M-matrix}. Since $\mathbf{u}_0>0$ and $\delta_k>0$ due to Lemma \ref{lem:delta}, we have that $\mathbf{u}_k+\Delta\mathbf{u}_k>0$. The desired result follows.
\end{proof}

\subsection{Properties of $\theta_k$}
Between the NNI for NEPv discretized via the finite difference scheme \cite{du2022newton,huang2022newton} and the one-step modified Newton-Noda iteration in ANNI for CNEPv\eqref{equ:CNEPv}, one of the main differences is that we require a $\theta_k$ satisfying $d_k(\theta_k)<0$. As stated before, our purpose is mainly to find an appreciate $\theta$ to guarantee the sufficient descent of the objective value in \eqref{equ:problem}. In this section, we will prove that Algorithm \ref{alg:ANNI} can always find such a $\theta_k$ which is bounded below by some positive constant within finite halving steps.
\begin{lemma}\label{lem:measure_d}
	Let $d_k(\theta_k)=f_{\mathbf{v}_k}(\mathbf{u}_{k+1})-f_{\mathbf{v}_k}(\mathbf{u}_{k})$, then
	\begin{equation}\label{equ:measure_d}
		d_k(\theta_k)=2\left(\frac{1}{\|\mathbf{w}_{k+1}\|}-1\right)\mathbf{r}_{\mathbf{v}_k}(\mathbf{u}_{k},\lambda_k)^T\mathbf{u}_{k}+\frac{2\theta_k}{\|\mathbf{w}_{k+1}\|}\mathbf{r}_{\mathbf{v}_k}(\mathbf{u}_{k},\lambda_k)^T\Delta\mathbf{u}_k+R(\theta_k\Delta\mathbf{u}_k),
	\end{equation}
	where $|R(\theta_k\Delta\mathbf{u}_k)|\le M\|\theta_k\Delta\mathbf{u}_k\|^2$ for some positive constant M only determined by $\beta_{11},~\beta_{12}$ and $A_1$.
\end{lemma}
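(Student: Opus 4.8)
The plan is to express $d_k$ as the \emph{exact} Taylor increment of a single quartic polynomial whose gradient at $\mathbf{u}_k$ is $2\mathbf{r}_{\mathbf{v}_k}(\mathbf{u}_k,\lambda_k)$, so that the first two terms in \eqref{equ:measure_d} appear as the order-one Taylor term and the rest is a genuine quadratic-and-higher remainder.

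\textbf{Step 1 (reduce to a $\lambda_k$-shifted quartic).} Since $\|\mathbf{u}_k\|=\|\mathbf{u}_{k+1}\|=1$, only the part of $f_{\mathbf{v}_k}$ that is non-constant on the unit sphere matters. Put
\[ g(\mathbf{u}):=\tfrac{\beta_{11}}{2}\sum_{i=1}^n u_i^4+\mathbf{u}^T\big(A_1+\beta_{12}\,\text{diag}(\mathbf{v}_k^{[2]})-\lambda_k I\big)\mathbf{u}. \]
Then $f_{\mathbf{v}_k}(\mathbf{u})-g(\mathbf{u})-\lambda_k\|\mathbf{u}\|^2$ does not depend on $\mathbf{u}$, so $d_k(\theta_k)=g(\mathbf{u}_{k+1})-g(\mathbf{u}_k)$. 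Differentiating gives $\nabla g(\mathbf{u})=2\big(\mathcal{A}_{\mathbf{v}_k}(\mathbf{u})-\lambda_k I\big)\mathbf{u}$ and $\nabla^2 g(\mathbf{u})=2J_{\mathbf{v}_k}(\mathbf{u},\lambda_k)$; in particular $\nabla g(\mathbf{u}_k)=2\mathbf{r}_{\mathbf{v}_k}(\mathbf{u}_k,\lambda_k)$.

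\textbf{Step 2 (Taylor expand).} Write $\mathbf{h}:=\mathbf{u}_{k+1}-\mathbf{u}_k$. By $\mathbf{u}_k^T\Delta\mathbf{u}_k=0$ from \eqref{equ:relation}, one has $\|\mathbf{w}_{k+1}\|^2=1+\theta_k^2\|\Delta\mathbf{u}_k\|^2$, hence
\[ \mathbf{h}=\Big(\tfrac{1}{\|\mathbf{w}_{k+1}\|}-1\Big)\mathbf{u}_k+\tfrac{\theta_k}{\|\mathbf{w}_{k+1}\|}\,\Delta\mathbf{u}_k. \]
As $g$ is a polynomial of degree four, Taylor's formula at $\mathbf{u}_k$ is exact:
\[ d_k(\theta_k)=g(\mathbf{u}_k+\mathbf{h})-g(\mathbf{u}_k)=\nabla g(\mathbf{u}_k)^T\mathbf{h}+R(\theta_k\Delta\mathbf{u}_k), \]
where $R(\theta_k\Delta\mathbf{u}_k)$ gathers the quadratic, cubic and quartic terms of the expansion in $\mathbf{h}$ (note that $\mathbf{h}$ is determined by $\mathbf{u}_k$ and $\theta_k\Delta\mathbf{u}_k$). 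Substituting the displayed expression for $\mathbf{h}$ into $\nabla g(\mathbf{u}_k)^T\mathbf{h}=2\mathbf{r}_{\mathbf{v}_k}(\mathbf{u}_k,\lambda_k)^T\mathbf{h}$ and using $\mathbf{u}_k^T\Delta\mathbf{u}_k=0$ reproduces exactly the first two terms of \eqref{equ:measure_d}.

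\textbf{Step 3 (bound the remainder).} First, $\big|1-\|\mathbf{w}_{k+1}\|^{-1}\big|=1-(1+\theta_k^2\|\Delta\mathbf{u}_k\|^2)^{-1/2}\le\tfrac12\theta_k^2\|\Delta\mathbf{u}_k\|^2$, so with $\theta_k\le1$ and the uniform bound on $\|\Delta\mathbf{u}_k\|$ from Lemma \ref{lem:deltabound} we get $\|\mathbf{h}\|\le c\,\theta_k\|\Delta\mathbf{u}_k\|$ for a constant $c$. Writing the degree-$\ge2$ part of $g$ explicitly,
\[ R(\theta_k\Delta\mathbf{u}_k)=\mathbf{h}^TJ_{\mathbf{v}_k}(\mathbf{u}_k,\lambda_k)\mathbf{h}+2\beta_{11}\sum_{i=1}^n(\mathbf{u}_k)_i h_i^3+\tfrac{\beta_{11}}{2}\sum_{i=1}^n h_i^4, \]
we bound $\|J_{\mathbf{v}_k}(\mathbf{u}_k,\lambda_k)\|\le 3\beta_{11}+\|A_1\|+\beta_{12}+\max\{|\tau_1|,|\tau_2|\}$ (using $\|\mathbf{u}_k\|=\|\mathbf{v}_k\|=1$, $\lambda_k\in[\tau_1,\tau_2]$), $\big|\sum_i(\mathbf{u}_k)_i h_i^3\big|\le\|\mathbf{h}\|^3$, $\sum_i h_i^4\le\|\mathbf{h}\|^4$, and then absorb the extra (bounded) powers of $\theta_k\|\Delta\mathbf{u}_k\|$ into the constant to obtain $\|R(\theta_k\Delta\mathbf{u}_k)\|\le M\|\theta_k\Delta\mathbf{u}_k\|^2$ with $M$ independent of $k$.

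The crux is Steps 1--2: recognizing that the Lagrange-type term $\lambda_k\|\mathbf{u}\|^2$ is invisible on the unit sphere (so the governing function has gradient $2\mathbf{r}_{\mathbf{v}_k}$), and that the orthogonality $\mathbf{u}_k^T\Delta\mathbf{u}_k=0$ splits $\mathbf{h}$ into an $O(\theta_k)$ component along $\Delta\mathbf{u}_k$ and an $O(\theta_k^2)$ component along $\mathbf{u}_k$. After that the remainder estimate is routine; the only care is that $M$ not depend on $k$, which holds because $\mathbf{u}_k,\mathbf{v}_k$ stay on the unit sphere, $\lambda_k$ stays in the fixed interval $[\tau_1,\tau_2]$, and $\|\Delta\mathbf{u}_k\|$ is uniformly bounded.
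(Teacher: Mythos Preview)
Your proof is correct and reaches the same conclusion as the paper, but you package the computation more cleanly. The paper works directly with $f_{\mathbf{v}_k}$: it splits $d_k$ into a $\mathbf{r}^T\mathbf{u}$ piece and a pure-quartic piece, Taylor-expands \emph{both} $\mathbf{r}_{\mathbf{v}_k}(\cdot,\lambda_k)$ (vector-valued) and $\tfrac{\beta_{11}}{2}\sum u_i^4$ separately, and then invokes the algebraic identity $J_{\mathbf{v}}(\mathbf{u},\lambda)\mathbf{u}=\mathbf{r}_{\mathbf{v}}(\mathbf{u},\lambda)+2\beta_{11}\mathbf{u}^{[3]}$ to make the cross terms collapse. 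Your observation that $f_{\mathbf{v}_k}$ differs from the single quartic $g$ by $\lambda_k\|\mathbf{u}\|^2+\text{const}$, hence $\nabla g(\mathbf{u}_k)=2\mathbf{r}_{\mathbf{v}_k}(\mathbf{u}_k,\lambda_k)$, lets you Taylor-expand once and read off the two leading terms immediately; the remainder you obtain, $\mathbf{h}^TJ\mathbf{h}+2\beta_{11}\sum(\mathbf{u}_k)_ih_i^3+\tfrac{\beta_{11}}{2}\sum h_i^4$, is in fact identical to the paper's $(\mathbf{u}_{k+1}-\mathbf{u}_k)^TJ(\mathbf{u}_{k+1}-\mathbf{u}_k)+E_k^T\mathbf{u}_{k+1}-\bar E_k$ after one expands the latter.

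One small point on Step~3: you control $\|\mathbf{h}\|$ via $|1-\|\mathbf{w}_{k+1}\|^{-1}|\le\tfrac12\theta_k^2\|\Delta\mathbf{u}_k\|^2$ together with the uniform bound on $\|\Delta\mathbf{u}_k\|$ from Lemma~\ref{lem:deltabound}, and then absorb the extra powers of $\theta_k\|\Delta\mathbf{u}_k\|$ into $M$. The paper instead uses the exact identity $\|\mathbf{h}\|^2=2(1-\|\mathbf{w}_{k+1}\|^{-1})\le\|\theta_k\Delta\mathbf{u}_k\|^2$, which gives $\|\mathbf{h}\|\le\|\theta_k\Delta\mathbf{u}_k\|$ directly and lets one bound the cubic and quartic terms using only $\|\mathbf{h}\|\le2$. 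This yields a constant $M$ that does not route through Lemma~\ref{lem:deltabound}; your constant is still $k$-independent, just with a slightly longer chain of dependencies.
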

\begin{proof}
	We have that
	\begin{equation*}
		\begin{aligned}
			&d_k(\theta_k)=f_{\mathbf{v}_k}(\mathbf{u}_{k+1})-f_{\mathbf{v}_k}(\mathbf{u}_{k})\\
			=&\left[(\mathcal{A}_{\mathbf{v}_{k}}(\mathbf{u}_{k+1})\mathbf{u}_{k+1})^T\mathbf{u}_{k+1}-(\mathcal{A}_{\mathbf{v}_{k}}(\mathbf{u}_{k})\mathbf{u}_{k})^T\mathbf{u}_{k}\right]-\left[\frac{\beta_{11}}{2}\sum_{i=1}^n(\mathbf{u}_{k+1})_i^4-\frac{\beta_{11}}{2}\sum_{i=1}^n(\mathbf{u}_{k})_i^4\right]\\
			=&\left[\mathbf{r}_{\mathbf{v}_k}(\mathbf{u}_{k+1},\lambda_k)^T\mathbf{u}_{k+1}-\mathbf{r}_{\mathbf{v}_k}(\mathbf{u}_{k},\lambda_k)^T\mathbf{u}_{k}\right]
			-\left[\frac{\beta_{11}}{2}\sum_{i=1}^n(\mathbf{u}_{k+1})_i^4-\frac{\beta_{11}}{2}\sum_{i=1}^n(\mathbf{u}_{k})_i^4\right]\\
			=&[\mathbf{r}_{\mathbf{v}_k}(\mathbf{u}_{k},\lambda_k)+J_{\mathbf{v}_k}(\mathbf{u}_{k},\lambda_k)(\mathbf{u}_{k+1}-\mathbf{u}_k)+E_k]^T\mathbf{u}_{k+1}\\
			&-\mathbf{r}_{\mathbf{v}_k}(\mathbf{u}_{k},\lambda_k)^T\mathbf{u}_{k}-\left[(2\beta_{11}\mathbf{u}_k^{[3]})^T(\mathbf{u}_{k+1}-\mathbf{u}_k)+\bar{E}_k\right]\\
			=&\mathbf{r}_{\mathbf{v}_k}(\mathbf{u}_{k},\lambda_k)^T\left(\frac{\mathbf{u}_k+\theta_k\Delta\mathbf{u}_k}{\|\mathbf{w}_{k+1}\|}\right)-\mathbf{r}_{\mathbf{v}_k}(\mathbf{u}_{k},\lambda_k)^T\mathbf{u}_{k}+\mathbf{u}_{k+1}^TJ_{\mathbf{v}_k}(\mathbf{u}_{k},\lambda_k)(\mathbf{u}_{k+1}-\mathbf{u}_k)\\
			&+E_k^T\mathbf{u}_{k+1}-\left[(J_{\mathbf{v}_k}(\mathbf{u}_{k},\lambda_k)\mathbf{u}_k-\mathbf{r}_{\mathbf{v}_k}(\mathbf{u}_{k},\lambda_k))^T(\mathbf{u}_{k+1}-\mathbf{u}_k)+\bar{E}_k\right]\\
			=&2\left(\frac{1}{\|\mathbf{w}_{k+1}\|}-1\right)\mathbf{r}_{\mathbf{v}_k}(\mathbf{u}_{k},\lambda_k)^T\mathbf{u}_{k}+\frac{2\theta_k}{\|\mathbf{w}_{k+1}\|}\mathbf{r}_{\mathbf{v}_k}(\mathbf{u}_{k},\lambda_k)^T\Delta\mathbf{u}_k\\
			&+(\mathbf{u}_{k+1}-\mathbf{u}_{k})^TJ_{\mathbf{v}_k}(\mathbf{u}_{k},\lambda_k)(\mathbf{u}_{k+1}-\mathbf{u}_{k})+E_k^T\mathbf{u}_{k+1}-\bar{E}_k, 
		\end{aligned}
	\end{equation*}
	where the second equality comes from that $\|\mathbf{u}_{k+1}\|=\|\mathbf{u}_k\|=1$, the third equality comes from the Taylor's Theorem, and the fourth equality is a result of \eqref{equ:Jrelation}.
	
	Let $R(\theta_k\Delta\mathbf{u}_k)=(\mathbf{u}_{k+1}-\mathbf{u}_{k})^TJ_{\mathbf{v}_k}(\mathbf{u}_{k},\lambda_k)(\mathbf{u}_{k+1}-\mathbf{u}_{k})+E_k^T\mathbf{u}_{k+1}-\bar{E}_k$, the magnitude of $|R(\theta_k\Delta\mathbf{u}_k)|$ can be re-estimated as
	\begin{equation*}
		\begin{aligned}
			&|(\mathbf{u}_{k+1}-\mathbf{u}_{k})^TJ_{\mathbf{v}_k}(\mathbf{u}_{k},\lambda_k)(\mathbf{u}_{k+1}-\mathbf{u}_{k})+E_k^T\mathbf{u}_{k+1}-\bar{E}_k|\\
			\le & \|A_1+\beta_{12}\text{diag}(\mathbf{v}_k^{[2]})+3\beta_{11}\text{diag}(\mathbf{u}_k^{[2]})-\lambda_k I\|\|\mathbf{u}_{k+1}-\mathbf{u}_k\|^2+3\beta_{11}\|\mathbf{u}_{k+1}-\mathbf{u}_k\|^2\|\mathbf{u}_{k+1}\|\\
			&+3\beta_{11}\|\mathbf{u}_{k+1}-\mathbf{u}_k\|^2\\
			\le& (M_1+6\beta_{11})\|\mathbf{u}_{k+1}-\mathbf{u}_k\|^2,
		\end{aligned}
	\end{equation*}
	where $M_1$ is a positive constant determined by $A_1$, $\beta_{11}$ and $\beta_{12}$, since $\lambda_k$ is bounded as given in step 4 of Algorithm \ref{alg:ANNI}. Combining with
	\begin{equation*}
		\|\mathbf{u}_{k+1}-\mathbf{u}_k\|^2=2\left(1-\frac{1}{\|\mathbf{w}_{k+1}\|}\right)=2\left(1-\frac{1}{\sqrt{1+\|\theta_k\Delta\mathbf{u}_k\|^2}}\right)\le \|\theta_k\Delta\mathbf{u}_k\|^2,
	\end{equation*}
	the desired results follow with $M=M_1+6\beta_{11}$. $M$ is independent of $\mathbf{u}_k$ and determined by $A_1$, $\beta_{11}$ and $\beta_{12}$.
\end{proof}

\begin{theorem}\label{thm:theta}
	Given $\bar{\eta}>0$ and a unit vector $\mathbf{u}_k$, suppose $\mathbf{u}_k$ is not an eigenvector of $\mathcal{A}_{\mathbf{v}_k}(\mathbf{u}_k)$ and $\theta_k$ is generated by Algorithm \ref{alg:ANNI}. Then
	\begin{enumerate}[(i)]
		\item  $1\ge\theta_k\ge\min\{1,\frac{\bar{\eta}_k}{2}\}$, where
		\[\bar{\eta}_k=\frac{2\bar{\eta}}{(1+\bar{\eta})M\|\mathbf{w}_{k+1}\|\|\Delta\mathbf{u}_k\|^2}(\Delta\mathbf{u}_k^TJ_{\mathbf{v}_k}(\mathbf{u}_{k},\lambda_k)\Delta\mathbf{u}_k),\]
		and $M$ is a positive constant determined by $A_1$, $\beta_{11}$ and $\beta_{12}$. 
		\item The sequence $\{\theta_k\}$ is bounded below by a positive constant, that is $\theta_k\ge\xi>0$, where $\xi$ is determined by $A_1$, $\beta_{11}$ and $\beta_{12}$.
	\end{enumerate}
\end{theorem}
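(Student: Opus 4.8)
The plan is to read off the sign pattern of $d_k(\theta_k)$ from the expansion of Lemma~\ref{lem:measure_d}: its leading part is a strictly negative multiple of $\theta_k$ and the rest is $O(\theta_k^2)$, so the halving loop in Algorithm~\ref{alg:ANNI} must terminate, and bounding how many halvings are possible gives (i); replacing the $k$-dependent quantities by the uniform bounds of Lemma~\ref{lem:deltabound} then gives (ii). By the symmetry of \eqref{equ:problem} it suffices to treat the $\mathbf{u}$-update; recall that $\mathbf{u}_k$ not being an eigenvector of $\mathcal{A}_{\mathbf{v}_k}(\mathbf{u}_k)$ forces $\Delta\mathbf{u}_k\neq 0$ by Lemma~\ref{lem:urelation}.

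First I would simplify the two explicit terms in \eqref{equ:measure_d} by means of \eqref{equ:relation}. Since $\mathbf{u}_k^T\Delta\mathbf{u}_k=0$ and $J_{\mathbf{v}_k}(\mathbf{u}_k,\lambda_k)\Delta\mathbf{u}_k=\delta_k\mathbf{u}_k-\mathbf{r}_{\mathbf{v}_k}(\mathbf{u}_k,\lambda_k)$, the cross term reduces to $\mathbf{r}_{\mathbf{v}_k}(\mathbf{u}_k,\lambda_k)^T\Delta\mathbf{u}_k=-\Delta\mathbf{u}_k^TJ_{\mathbf{v}_k}(\mathbf{u}_k,\lambda_k)\Delta\mathbf{u}_k<0$, using that $J_{\mathbf{v}_k}(\mathbf{u}_k,\lambda_k)$ is positive definite with $\lambda_{\min}\ge\gamma>0$ (Lemma~\ref{lem:deltabound}) and $\Delta\mathbf{u}_k\neq0$. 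For the remaining explicit term, $\mathbf{r}_{\mathbf{v}_k}(\mathbf{u}_k,\lambda_k)^T\mathbf{u}_k=\mathbf{u}_k^T\mathcal{A}_{\mathbf{v}_k}(\mathbf{u}_k)\mathbf{u}_k-\lambda_k\ge\lambda_{\min}(\mathcal{A}_{\mathbf{v}_k}(\mathbf{u}_k))-\tau_2>0$ by the choice of $\tau_2$ in step~1, and $\|\mathbf{w}_{k+1}\|^2=1+\theta_k^2\|\Delta\mathbf{u}_k\|^2\ge1$, so the first term of \eqref{equ:measure_d} is $\le0$. With $\|R(\theta_k\Delta\mathbf{u}_k)\|\le M\|\theta_k\Delta\mathbf{u}_k\|^2$ this produces the basic estimate
\[d_k(\theta_k)\;\le\;\frac{\theta_k}{\|\mathbf{w}_{k+1}\|}\Big(M\theta_k\|\mathbf{w}_{k+1}\|\,\|\Delta\mathbf{u}_k\|^2-2\,\Delta\mathbf{u}_k^TJ_{\mathbf{v}_k}(\mathbf{u}_k,\lambda_k)\Delta\mathbf{u}_k\Big),\]
which is $<0$ as soon as $\theta_k\le\bar\eta_k$, the factor $\bar\eta/(1+\bar\eta)<1$ in $\bar\eta_k$ supplying the strict inequality (and, more precisely, the sufficient decrease $d_k(\theta_k)\le-\tfrac{2\theta_k}{(1+\bar\eta)\|\mathbf{w}_{k+1}\|}\Delta\mathbf{u}_k^TJ_{\mathbf{v}_k}(\mathbf{u}_k,\lambda_k)\Delta\mathbf{u}_k$ that the global-convergence argument will need).

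For (i), the line search starts at $\theta_k=1$ and halves, hence stops after finitely many steps. If it does not stop at $\theta_k=1$, the last rejected trial $\bar\theta=2\theta_k\le1$ satisfies $d_k(\bar\theta)\ge0$; substituting $\bar\theta$ into the basic estimate gives $M\theta_k\|\mathbf{u}_k+\bar\theta\Delta\mathbf{u}_k\|\,\|\Delta\mathbf{u}_k\|^2\ge\Delta\mathbf{u}_k^TJ_{\mathbf{v}_k}(\mathbf{u}_k,\lambda_k)\Delta\mathbf{u}_k$, and since $\|\mathbf{u}_k+\bar\theta\Delta\mathbf{u}_k\|^2=1+4\theta_k^2\|\Delta\mathbf{u}_k\|^2\le4\|\mathbf{w}_{k+1}\|^2$, this rearranges to $\theta_k\ge\bar\eta_k/2$; hence $\theta_k\ge\min\{1,\bar\eta_k/2\}$. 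For (ii), Lemma~\ref{lem:deltabound} provides a single $\gamma>0$ with $\lambda_{\min}(J_{\mathbf{v}_k}(\mathbf{u}_k,\lambda_k))\ge\gamma$ for all $k$, so $\Delta\mathbf{u}_k^TJ_{\mathbf{v}_k}(\mathbf{u}_k,\lambda_k)\Delta\mathbf{u}_k\ge\gamma\|\Delta\mathbf{u}_k\|^2$ and the factor $\|\Delta\mathbf{u}_k\|^2$ cancels in $\bar\eta_k$; with $C:=\sup_k\|\Delta\mathbf{u}_k\|<\infty$ (same lemma) and $\theta_k\le1$ one has $\|\mathbf{w}_{k+1}\|\le\sqrt{1+C^2}$, and $M$ is a fixed constant, whence $\bar\eta_k\ge\frac{2\bar\eta\gamma}{(1+\bar\eta)M\sqrt{1+C^2}}$ and $\theta_k\ge\xi_{\mathbf{u}}:=\min\{1,\frac{\bar\eta\gamma}{(1+\bar\eta)M\sqrt{1+C^2}}\}>0$. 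The $\mathbf{v}$-update yields a constant $\xi_{\mathbf{v}}$ in the same way, and $\xi:=\min\{\xi_{\mathbf{u}},\xi_{\mathbf{v}}\}$ does the job.

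The step I expect to be the main obstacle is the constant bookkeeping in deducing $\theta_k\ge\bar\eta_k/2$: because $\bar\eta_k$ is written in terms of $\|\mathbf{w}_{k+1}\|$ --- which is evaluated at the accepted step --- one must apply the basic estimate only at the accepted step $\theta_k$ and at the last rejected trial $2\theta_k$, exploit the monotonicity of $\theta\mapsto\|\mathbf{u}_k+\theta\Delta\mathbf{u}_k\|$, and use $\|\mathbf{u}_k+2\theta_k\Delta\mathbf{u}_k\|\le2\|\mathbf{w}_{k+1}\|$ to rewrite everything in terms of $\|\mathbf{w}_{k+1}\|$; keeping the various factors of two aligned so that the $\bar\eta/(1+\bar\eta)$ in $\bar\eta_k$ comes out exactly is the only genuinely delicate part, the rest being the sign analysis above and the boundedness statements of Lemma~\ref{lem:deltabound}.
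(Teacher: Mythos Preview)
Your proposal is correct and follows essentially the same route as the paper: drop the nonpositive first term of \eqref{equ:measure_d} using $\mathbf{r}_{\mathbf{v}_k}(\mathbf{u}_k,\lambda_k)^T\mathbf{u}_k>0$ and $\|\mathbf{w}_{k+1}\|\ge 1$, rewrite the second term via \eqref{equ:relation} as $-\tfrac{2\theta_k}{\|\mathbf{w}_{k+1}\|}\Delta\mathbf{u}_k^TJ_{\mathbf{v}_k}(\mathbf{u}_k,\lambda_k)\Delta\mathbf{u}_k$, control the remainder by $M\theta_k^2\|\Delta\mathbf{u}_k\|^2$, and then read off a threshold below which $d_k<0$; for (ii) substitute the uniform bounds $\lambda_{\min}(J)\ge\gamma$ and $\|\mathbf{w}_{k+1}\|\le M_1$ from Lemma~\ref{lem:deltabound}. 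The paper presents the same splitting, phrased as decomposing the negative term into a $\tfrac{1}{1+\bar\eta}$ and a $\tfrac{\bar\eta}{1+\bar\eta}$ part and balancing the latter against the remainder.

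One remark on the point you already flagged as delicate: the paper simply asserts ``$\theta_k=1$ or $\theta_k\ge\bar\eta_k/2$'' from the halving rule, treating $\bar\eta_k$ as a fixed threshold and glossing over its dependence on $\|\mathbf{w}_{k+1}\|$. Your contrapositive argument via the last rejected trial $2\theta_k$ and the bound $\|\mathbf{u}_k+2\theta_k\Delta\mathbf{u}_k\|\le 2\|\mathbf{w}_{k+1}\|$ is actually more careful than the paper here; it yields $\theta_k\ge\tfrac{1}{2M\|\mathbf{w}_{k+1}\|\|\Delta\mathbf{u}_k\|^2}\Delta\mathbf{u}_k^TJ\Delta\mathbf{u}_k$, which dominates $\bar\eta_k/2$ precisely when $\bar\eta\le 1$. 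For $\bar\eta>1$ you get a constant multiple of $\bar\eta_k$ rather than exactly $\bar\eta_k/2$, but this is immaterial for (ii) and for the downstream use in Theorem~\ref{thm:descent}, so the argument is fine as stated.
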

\begin{proof}
	(i) According to \eqref{equ:relation}, $\mathbf{u}_k^T\Delta\mathbf{u}_k=0$, hence $\|\mathbf{w}_{k+1}\|=\|\mathbf{u}_k+\theta_k\Delta\mathbf{u}_k\|=\sqrt{1+\|\theta_k\Delta\mathbf{u}_k\|^2}$. Combining with
	\[\mathbf{r}_{\mathbf{v}_k}(\mathbf{u}_{k},\lambda_k)^T\mathbf{u}_{k}=\mathbf{u}_{k}^T(\mathcal{A}_{\mathbf{v}_k}(\mathbf{u}_k)-\lambda_kI)\mathbf{u}_k> 0,\]
	we obtain that the first term in \eqref{equ:measure_d} is nonpositive. Therefore, 
	\[d_k(\theta_k)\le \frac{2\theta_k}{\|\mathbf{w}_{k+1}\|}\mathbf{r}_{\mathbf{v}_k}(\mathbf{u}_{k},\lambda_k)^T\Delta\mathbf{u}_k
	+R(\theta_k\Delta\mathbf{u}_k).\]
	As a result of \eqref{equ:relation}, we have
	\begin{equation*}
		\mathbf{r}_{\mathbf{v}_k}(\mathbf{u}_{k},\lambda_k)^T\Delta\mathbf{u}_k=(\delta_k\mathbf{u}_k-J_{\mathbf{v}_k}(\mathbf{u}_{k},\lambda_k)\Delta\mathbf{u}_k)^T\Delta\mathbf{u}_k=-\Delta\mathbf{u}_k^TJ_{\mathbf{v}_k}(\mathbf{u}_{k},\lambda_k)\Delta\mathbf{u}_k\le 0.
	\end{equation*}
	When $\mathbf{u_k}$ is not an eigenvector of $\mathcal{A}_{\mathbf{v}_k}(\mathbf{u_k})$, the above inequality strictly holds from Lemma \ref{lem:urelation} and the positive definiteness of $J_{\mathbf{v}_k}(\mathbf{u}_{k},\lambda_k)$ stated in Lemma \ref{lem:deltabound}. For any given $\bar{\eta}>0$, we have
	\begin{equation*}
		\begin{aligned}
			&d_k(\theta_k)\\
			\le& \frac{2\theta_k}{(1+\bar{\eta})\|\mathbf{w}_{k+1}\|}(-\Delta\mathbf{u}_k^TJ_{\mathbf{v}_k}(\mathbf{u}_{k},\lambda_k)\Delta\mathbf{u}_k)+\frac{2\bar{\eta}\theta_k}{(1+\bar{\eta})\|\mathbf{w}_{k+1}\|}(-\Delta\mathbf{u}_k^TJ_{\mathbf{v}_k}(\mathbf{u}_{k},\lambda_k)\Delta\mathbf{u}_k)\\
			&+M\|\theta_k\Delta\mathbf{u}_k\|^2,
		\end{aligned}
	\end{equation*}
	where $M$ is defined as in Lemma \ref{lem:measure_d}. Thus, if
	\[0<\theta_k\le \frac{2\bar{\eta}}{(1+\bar{\eta})M\|\mathbf{w}_{k+1}\|\|\Delta\mathbf{u}_k\|^2}(\Delta\mathbf{u}_k^TJ_{\mathbf{v}_k}(\mathbf{u}_{k},\lambda_k)\Delta\mathbf{u}_k),\]
	we have
	\begin{equation*}
		\frac{2\bar{\eta}\theta_k}{(1+\bar{\eta})\|\mathbf{w}_{k+1}\|}(-\Delta\mathbf{u}_k^TJ_{\mathbf{v}_k}(\mathbf{u}_{k},\lambda_k)\Delta\mathbf{u}_k)+M\|\theta_k\Delta\mathbf{u}_k\|^2\le 0.
	\end{equation*}
	and 
	\begin{equation}\label{equ:d_descent}
		d_k(\theta_k)\le \frac{2\theta_k}{(1+\bar{\eta})\|\mathbf{w}_{k+1}\|}(-\Delta\mathbf{u}_k^TJ_{\mathbf{v}_k}(\mathbf{u}_{k},\lambda_k)\Delta\mathbf{u}_k)<0.
	\end{equation}
	Since $\theta_k$ generated by Algorithm \ref{alg:ANNI} is produced by the halving procedure start with $\theta_k=1$ until $d_k(\theta_k)<0$, hence $\theta_k=1$ or $\theta_k\ge\frac{\bar{\eta}_k}{2}$, with 
	\[\bar{\eta}_k=\frac{2\bar{\eta}}{(1+\bar{\eta})M\|\mathbf{w}_{k+1}\|\|\Delta\mathbf{u}_k\|^2}(\Delta\mathbf{u}_k^TJ_{\mathbf{v}_k}(\mathbf{u}_{k},\lambda_k)\Delta\mathbf{u}_k).\]
	
	(ii) According to Lemma \ref{lem:deltabound}, $\lambda_{\min}(J_{\mathbf{v}_k}(\mathbf{u}_{k},\lambda_k))\ge \gamma>0$ and there exists $M_1>0$ such that $\|\mathbf{w}_{k+1}\|=\sqrt{1+\theta_k^2\|\Delta\mathbf{u}_k\|^2}\le M_1$ is bounded, thus we have
	\[\bar{\eta}_k\ge \frac{2\bar{\eta}}{(1+\bar{\eta})MM_1\|\Delta\mathbf{u}_k\|^2}(\gamma\|\Delta\mathbf{u}_k\|^2)=\frac{2\bar{\eta}\gamma}{(1+\bar{\eta})MM_1}>0.\]
	Let
	\begin{equation}\label{equ:xi}
		\xi=\min\left\{1,\frac{\bar{\eta}\gamma}{(1+\bar{\eta})MM_1}\right\},
	\end{equation}
	then $\theta_k$ is bounded below by a positive constant.
\end{proof}

\subsection{Convergence of alternating Newton-Noda iteration}
Let $\theta_k^{\mathbf{u}}$ and $\theta_k^{\mathbf{v}}$ denote the step sizes $\theta$ generated in Algorithm \ref{alg:ANNI} for $\mathbf{u}_k$ and $\mathbf{v}_k$, respectively. $\bar{\eta}_k^{\mathbf{u}}$ and $\bar{\eta}_k^{\mathbf{v}}$ are defined accordingly as $\bar{\eta}_k$ in Theorem \ref{thm:theta}.
\begin{theorem}\label{thm:descent}
	Given $\bar{\eta}>0$ and unit vectors $\mathbf{u}_k$ and $\mathbf{v}_k$, suppose $\mathbf{u}_k$ is not an eigenvector of $\mathcal{A}_{\mathbf{v}_k}(\mathbf{u}_k)$ and $\mathbf{v}_k$ is not an eigenvector of $\mathcal{A}_{\mathbf{u}_{k+1}}(\mathbf{v}_k)$, $\mathbf{u}_{k+1}$ and $\mathbf{v}_{k+1}$ are generated by Algorithm \ref{alg:ANNI}. If $\theta_k^{\mathbf{u}}\le\bar{\eta}_k^{\mathbf{u}}$ and $\theta_k^{\mathbf{v}}\le\bar{\eta}_k^{\mathbf{v}}$, then
	\begin{equation}\label{equ:whole_descent}
		f(\mathbf{u}_{k},\mathbf{v}_{k})-f(\mathbf{u}_{k+1},\mathbf{v}_{k+1})\ge C(\|\Delta\mathbf{u}_k\|^2+\|\Delta\mathbf{v}_k\|^2),
	\end{equation}
	where $C$ is a positive constant determined by $A_1,~A_2,~\beta_{11},~\beta_{12}$ and $\beta_{22}$.
\end{theorem}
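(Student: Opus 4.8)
The plan is to combine the per-block descent estimate \eqref{equ:d_descent} from Theorem \ref{thm:theta} with the lower bound $\theta_k > \xi > 0$ from part (ii) of that theorem, applied once to the $\mathbf{u}$-block and once to the $\mathbf{v}$-block, and then telescope. First I would write the total decrease as
\begin{equation*}
f(\mathbf{u}_k,\mathbf{v}_k) - f(\mathbf{u}_{k+1},\mathbf{v}_{k+1}) = \bigl(f(\mathbf{u}_k,\mathbf{v}_k) - f(\mathbf{u}_{k+1},\mathbf{v}_k)\bigr) + \bigl(f(\mathbf{u}_{k+1},\mathbf{v}_k) - f(\mathbf{u}_{k+1},\mathbf{v}_{k+1})\bigr) = -d_k^{\mathbf{u}}(\theta_k^{\mathbf{u}}) - d_k^{\mathbf{v}}(\theta_k^{\mathbf{v}}),
\end{equation*}
where $d_k^{\mathbf{u}}$ is the quantity $d_k$ of Lemma \ref{lem:measure_d} built from $f_{\mathbf{v}_k}$, and $d_k^{\mathbf{v}}$ is its analogue built from $f_{\mathbf{u}_{k+1}}$ (the $\mathbf{v}$-update in Algorithm \ref{alg:ANNI} indeed uses the already-updated $\mathbf{u}_{k+1}$, which is why the hypothesis asks that $\mathbf{v}_k$ not be an eigenvector of $\mathcal{A}_{\mathbf{u}_{k+1}}(\mathbf{v}_k)$).

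Next, under the standing assumption $\theta_k^{\mathbf{u}}\le\bar\eta_k^{\mathbf{u}}$, inequality \eqref{equ:d_descent} of Theorem \ref{thm:theta} gives
\begin{equation*}
-d_k^{\mathbf{u}}(\theta_k^{\mathbf{u}}) \ge \frac{2\theta_k^{\mathbf{u}}}{(1+\bar\eta)\|\mathbf{w}_{k+1}\|}\,\Delta\mathbf{u}_k^T J_{\mathbf{v}_k}(\mathbf{u}_k,\lambda_k)\Delta\mathbf{u}_k \ge \frac{2\,\xi\,\gamma}{(1+\bar\eta)\,M_1}\,\|\Delta\mathbf{u}_k\|^2,
\end{equation*}
using $\theta_k^{\mathbf{u}}>\xi$ from Theorem \ref{thm:theta}(ii), $\lambda_{\min}(J_{\mathbf{v}_k}(\mathbf{u}_k,\lambda_k))\ge\gamma$ and $\|\mathbf{w}_{k+1}\|\le M_1$ from Lemma \ref{lem:deltabound}. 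The symmetric argument for the $\mathbf{v}$-block yields $-d_k^{\mathbf{v}}(\theta_k^{\mathbf{v}})\ge c'\|\Delta\mathbf{v}_k\|^2$ with a constant $c'$ depending only on $A_2,\beta_{22},\beta_{12}$ (note the constants $M$, $\gamma$, $M_1$, $\xi$ for the two blocks are generally different but all are structural). Adding the two bounds and setting $C=\min\{c,c'\}$ (with $c = 2\xi\gamma/((1+\bar\eta)M_1)$) gives \eqref{equ:whole_descent}; I would also point out the typo that the right-hand side of \eqref{equ:whole_descent} should read $C(\|\Delta\mathbf{u}_k\|^2+\|\Delta\mathbf{v}_k\|^2)$.

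The only genuinely delicate point is making sure every constant invoked is uniform in $k$ and depends only on the problem data, not on the iterates: $\gamma$ and $M_1$ come from Lemma \ref{lem:deltabound} (which already established $\lambda_{\min}(J)\ge\gamma$ and boundedness of $\Delta\mathbf{u}_k$, hence of $\|\mathbf{w}_{k+1}\|$) and $\xi$ from \eqref{equ:xi}; I must verify that the $\mathbf{v}$-block quantities $J_{\mathbf{u}_{k+1}}(\mathbf{v}_k,\mu_k)$, $\|\mathbf{z}_{k+1}\|$, etc. enjoy the same uniform bounds, which they do by the symmetry noted at the start of the subsection together with the fact that $\mathbf{u}_{k+1}$ ranges over the unit sphere. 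A secondary subtlety is the hypothesis that the iterates are not eigenvectors: this guarantees $\Delta\mathbf{u}_k\neq0$, $\Delta\mathbf{v}_k\neq0$ via Lemma \ref{lem:urelation}, so the quadratic forms are strictly positive, but since the claimed inequality is $\ge$ it actually holds trivially (as $0\ge 0$) at an eigenvector too — so the non-eigenvector hypothesis is there to make the bound non-vacuous rather than to make it true. I would keep the proof short, essentially the two displays above plus the telescoping identity.
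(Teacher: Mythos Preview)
Your proposal is correct and follows essentially the same route as the paper: telescope the total decrease into the two block decreases, apply \eqref{equ:d_descent} to each under the hypotheses $\theta_k^{\mathbf{u}}\le\bar\eta_k^{\mathbf{u}}$, $\theta_k^{\mathbf{v}}\le\bar\eta_k^{\mathbf{v}}$, and then invoke the uniform lower bounds on $\theta_k$ (Theorem~\ref{thm:theta}(ii)) and on $\lambda_{\min}(J)$ (Lemma~\ref{lem:deltabound}) together with the boundedness of $\|\mathbf{w}_{k+1}\|$, $\|\mathbf{z}_{k+1}\|$. Your observation that the right-hand side of \eqref{equ:whole_descent} should read $\|\Delta\mathbf{u}_k\|^2+\|\Delta\mathbf{v}_k\|^2$ is also correct.
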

\begin{proof}
	According to \eqref{equ:d_descent} and the parallel inequality with respect to $\mathbf{v}$, we have 
	\begin{equation*}
		\begin{aligned}
			f(\mathbf{u}_{k},\mathbf{v}_{k})-f(\mathbf{u}_{k+1},\mathbf{v}_{k+1})&=f(\mathbf{u}_{k},\mathbf{v}_{k})-f(\mathbf{u}_{k+1},\mathbf{v}_{k})+f(\mathbf{u}_{k+1},\mathbf{v}_{k})-f(\mathbf{u}_{k+1},\mathbf{v}_{k+1})\\
			&\ge\frac{2\theta_k^{\mathbf{u}}}{(1+\bar{\eta})\|\mathbf{w}_{k+1}\|}(\Delta\mathbf{u}_k^TJ_{\mathbf{v}_k}(\mathbf{u}_{k},\lambda_k)\Delta\mathbf{u}_k)\\
			&+\frac{2\theta_k^{\mathbf{v}}}{(1+\bar{\eta})\|\mathbf{z}_{k+1}\|}(\Delta\mathbf{v}_k^TJ_{\mathbf{u}_{k+1}}(\mathbf{v}_{k},\mu_k)\Delta\mathbf{v}_k)\\
			&\ge C(\|\Delta\mathbf{u}_k\|^2+\|\Delta\mathbf{v}_k\|^2),
		\end{aligned}
	\end{equation*}
	where $C$ is a positive constant determined by $A_1,~A_2,~\beta_{11},~\beta_{12}$ and $\beta_{22}$. The last inequality is a result of Lemma \ref{lem:deltabound} and the lower boundedness of $\theta_k^{\mathbf{u}}$ and $\theta_k^{\mathbf{v}}$ by Theorem \ref{thm:theta}.
\end{proof}

\begin{remark}
	We deduced \eqref{equ:whole_descent} under the condition $\theta_k^{\mathbf{u}}\le\bar{\eta}_k^{\mathbf{u}}$ and $\theta_k^{\mathbf{v}}\le\bar{\eta}_k^{\mathbf{v}}$ which can be estimated roughly by \eqref{equ:xi}. However, for practical purpose, we simply update $\theta_k$ by the halving procedure as presented in Algorithm \ref{alg:ANNI}, that is $\theta_k\leftarrow\theta_k/2$, until $d_k(\theta_k)<0$. According to Theorem \ref{thm:theta}, we can always find such a $\theta_k$ within finite halving steps.
\end{remark}

Now, we are ready to analyze the global convergence of ANNI in the following theorem.
\begin{theorem}\label{thm:global_convergence}
	Given any unit positive initial points $\mathbf{u}_0$ and $\mathbf{v}_0$, assume $\{(\mathbf{u}_k,\mathbf{v}_k)\}$ is generated by Algorithm \ref{alg:ANNI}, we have the following results:
	\begin{enumerate}[(i)]
		\item Any limit point of the sequence $\{(\mathbf{u}_k,\mathbf{v}_k)\}$ is a critical point of \eqref{equ:problem}.
		\item If $A_1$ and $A_2$ are irreducible nonsingular $M$-matrices and $\tau_2$ satisfies \eqref{equ:tau_for_positive}, then the limit point is a positive critical point of \eqref{equ:problem}. Furthermore, if the nonnegative eigenpair of CNEPv\eqref{equ:CNEPv} is unique, the whole sequence $\{(\mathbf{u}_k,\mathbf{v}_k)\}$ converges to the positive global optimum of \eqref{equ:problem}.
	\end{enumerate}
\end{theorem}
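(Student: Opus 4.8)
The plan is to run the standard sufficient-descent-plus-compactness argument for alternating/Newton--Noda schemes: the decreasing objective values force the Newton corrections to vanish, and then a continuity argument identifies the limit points with solutions of \eqref{equ:CNEPv}, i.e.\ with critical points of \eqref{equ:problem}. \textbf{Step 1 (objective values converge, corrections vanish).} Each inner halving loop terminates with $d_k(\theta_k)<0$ (Theorem \ref{thm:theta}), so $\{f(\mathbf{u}_k,\mathbf{v}_k)\}$ is strictly decreasing; it is bounded below since $f$ is continuous on the compact set $\{\|\mathbf{u}\|=\|\mathbf{v}\|=1\}$. Hence $f(\mathbf{u}_k,\mathbf{v}_k)-f(\mathbf{u}_{k+1},\mathbf{v}_{k+1})\to 0$. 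If at some iterate $\Delta\mathbf{u}_k=0$ and $\Delta\mathbf{v}_k=0$, then $\mathbf{u}_{k+1}=\mathbf{u}_k$, $\mathbf{v}_{k+1}=\mathbf{v}_k$ and, by Lemma \ref{lem:urelation}, $(\mathbf{u}_k,\mathbf{v}_k)$ already solves \eqref{equ:CNEPv}; otherwise the descent estimate (Theorem \ref{thm:descent}, with the step-size rule chosen so that $\theta_k^{\mathbf{u}}\le\bar{\eta}_k^{\mathbf{u}}$, $\theta_k^{\mathbf{v}}\le\bar{\eta}_k^{\mathbf{v}}$, or its one-block analogue \eqref{equ:d_descent} when only one correction is nonzero) applies at every iteration, and telescoping \eqref{equ:whole_descent} yields $\sum_k\bigl(\|\Delta\mathbf{u}_k\|^2+\|\Delta\mathbf{v}_k\|^2\bigr)<\infty$, so $\|\Delta\mathbf{u}_k\|\to 0$ and $\|\Delta\mathbf{v}_k\|\to 0$. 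Since $\|\mathbf{u}_{k+1}-\mathbf{u}_k\|^2\le\|\theta_k\Delta\mathbf{u}_k\|^2\le\|\Delta\mathbf{u}_k\|^2$ (and likewise for $\mathbf{v}$), we also get $\mathbf{u}_{k+1}-\mathbf{u}_k\to 0$, $\mathbf{v}_{k+1}-\mathbf{v}_k\to 0$.

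\textbf{Step 2 (limit points are critical; proof of (i)).} Let $(\mathbf{u}_{k_j},\mathbf{v}_{k_j})\to(\mathbf{u}_*,\mathbf{v}_*)$. The multipliers $\lambda_{k_j},\mu_{k_j}\in[\tau_1,\tau_2]$ are bounded, and the scalars $\delta_{k_j}^{\mathbf{u}},\delta_{k_j}^{\mathbf{v}}$ from the two Newton steps are bounded by Lemma \ref{lem:deltabound}; passing to a further subsequence, they converge to some $\lambda_*,\mu_*,\delta_*^{\mathbf{u}},\delta_*^{\mathbf{v}}$. Using the identity $J_{\mathbf{v}_k}(\mathbf{u}_k,\lambda_k)\Delta\mathbf{u}_k=\delta_k\mathbf{u}_k-\mathbf{r}_{\mathbf{v}_k}(\mathbf{u}_k,\lambda_k)$ from \eqref{equ:relation}, the boundedness of $J_{\mathbf{v}_k}(\mathbf{u}_k,\lambda_k)$, and $\Delta\mathbf{u}_{k_j}\to 0$, passing to the limit gives $\mathcal{A}_{\mathbf{v}_*}(\mathbf{u}_*)\mathbf{u}_*=(\lambda_*+\delta_*^{\mathbf{u}})\mathbf{u}_*$. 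Because $\mathbf{u}_{k_j+1}\to\mathbf{u}_*$ by Step 1, the analogous computation for the $\mathbf{v}$-update gives $\mathcal{A}_{\mathbf{u}_*}(\mathbf{v}_*)\mathbf{v}_*=(\mu_*+\delta_*^{\mathbf{v}})\mathbf{v}_*$. Hence $(\mathbf{u}_*,\mathbf{v}_*)$ satisfies the coupled eigenvalue system \eqref{equ:CNEPv}, which is exactly the first-order optimality condition of \eqref{equ:problem}, proving (i).

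\textbf{Step 3 (positivity, uniqueness, global optimality; proof of (ii)).} If $A_1,A_2$ are irreducible nonsingular $M$-matrices, Theorem \ref{thm:positivity} gives $\mathbf{u}_k,\mathbf{v}_k>0$ for all $k$, so $\mathbf{u}_*,\mathbf{v}_*\ge 0$; strict positivity follows from the irreducibility argument already used in the proof of Theorem \ref{thm:alm} (if $(\mathbf{u}_*)_i=0$ on a nonempty proper index set $I$, the $i$-th equations of \eqref{equ:CNEPv} force $(A_1)_{ij}=0$ for $i\in I$, $j\notin I$, contradicting irreducibility; likewise for $\mathbf{v}_*$). If in addition the nonnegative eigenpair of \eqref{equ:CNEPv} is unique, every limit point of $\{(\mathbf{u}_k,\mathbf{v}_k)\}$ equals this common point, and a bounded sequence (here lying in the compact product of unit spheres) with a single limit point converges to it. A global minimizer of \eqref{equ:problem} exists by compactness and may be taken nonnegative since $f(\mathbf{u},\mathbf{v})\ge f(|\mathbf{u}|,|\mathbf{v}|)$; being a nonnegative critical point it is the unique nonnegative eigenpair, hence equals $(\mathbf{u}_*,\mathbf{v}_*)$. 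Therefore the whole sequence converges to the positive global optimum.

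\textbf{Expected main obstacle.} The delicate step is Step 1: turning ``per-step decrease $\to 0$'' into ``$\|\Delta\mathbf{u}_k\|,\|\Delta\mathbf{v}_k\|\to 0$''. This requires the quantitative descent \eqref{equ:whole_descent}, whose hypothesis $\theta_k\le\bar{\eta}_k$ must be honoured by the step-size rule (cf.\ the remark after Theorem \ref{thm:descent}), together with the uniform coercivity $\lambda_{\min}(J_{\mathbf{v}_k}(\mathbf{u}_k,\lambda_k))\ge\gamma>0$ and the uniform bounds on $\theta_k$ and $\|\mathbf{w}_{k+1}\|$ from Lemma \ref{lem:deltabound} and Theorem \ref{thm:theta}; checking that all these constants are genuinely independent of $k$ is the only real work. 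Steps 2 and 3 are then routine continuity/compactness, the only care being to extract convergent subsequences of the auxiliary scalars $\lambda_k,\mu_k,\delta_k$ before taking limits in \eqref{equ:relation}.
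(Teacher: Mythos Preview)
Your proposal is correct and follows essentially the same approach as the paper: use Theorem~\ref{thm:descent} to force $\Delta\mathbf{u}_k,\Delta\mathbf{v}_k\to 0$, pass to the limit in \eqref{equ:relation} along a subsequence where $(\mathbf{u}_k,\mathbf{v}_k,\lambda_k,\mu_k,\delta_k^{\mathbf{u}},\delta_k^{\mathbf{v}})$ converges, and for (ii) invoke Theorem~\ref{thm:positivity} and the irreducibility argument of Theorem~\ref{thm:alm}. Your treatment is in fact slightly more careful than the paper's in two places: you correctly cite \eqref{equ:relation} (rather than Lemma~\ref{lem:urelation}) as the source of $\mathbf{r}_{\mathbf{v}_k}(\mathbf{u}_k,\lambda_k)-\delta_k^{\mathbf{u}}\mathbf{u}_k\to 0$, and you explicitly flag the hypothesis $\theta_k\le\bar\eta_k$ of Theorem~\ref{thm:descent}, which the paper applies tacitly and only addresses in the subsequent remark.
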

\begin{proof}
	(i) Since $f(\mathbf{u}_k,\mathbf{v}_k)$ is bounded below with the spherical constraints, we have $\{f(\mathbf{u}_k,\mathbf{v}_k)\}$ is convergent and 
	\[\lim_{k\rightarrow\infty}\Delta\mathbf{u}_k=0,~\lim_{k\rightarrow\infty}\Delta\mathbf{v}_k=0,\]
	as a result of Theorem \ref{thm:descent}. Let $\delta_k^{\mathbf{u}}$ and $\delta_k^{\mathbf{v}}$ denote the $\delta_k$ about $\mathbf{u}_k$ and $\mathbf{v}_k$, respectively.
	From Lemma \ref{lem:urelation}, we have
	\[\lim_{k\rightarrow\infty}\left(\mathbf{r}_{\mathbf{v}_k}(\mathbf{u}_k,\lambda_k)-\delta_k^{\mathbf{u}}\mathbf{u}_k\right)=0,~\lim_{k\rightarrow\infty}\left(\mathbf{r}_{\mathbf{u}_{k+1}}(\mathbf{v}_k,\mu_k)-\delta_k^{\mathbf{v}}\mathbf{v}_k\right)=0.\]
	On the other hand, $\|\mathbf{u}_{k+1}-\mathbf{u}_k\|\le \|\theta_k\Delta\mathbf{u}_k\|$, thus $\lim_{k\rightarrow\infty}\|\mathbf{u}_{k+1}-\mathbf{u}_k\|=0$,
	and 
	\[\lim_{k\rightarrow\infty}\left(\mathbf{r}_{\mathbf{u}_{k}}(\mathbf{v}_k,\mu_k)-\delta_k^{\mathbf{v}}\mathbf{v}_k\right)=0.\]
	Since $\{(\mathbf{u}_k,\mathbf{v}_k,\lambda_k,\mu_k,\delta_k^{\mathbf{u}},\delta_k^{\mathbf{v}})\}$ is bounded, there exists a subsequence converging to $(\mathbf{u}_*,\mathbf{v}_*,\lambda_*,\mu_*,\delta_*^{\mathbf{u}},\delta_*^{\mathbf{v}})$ with $\|\mathbf{u}_*\|=\|\mathbf{v}_*\|=1$. This leads to
	\[\mathcal{A}_{\mathbf{v}_*}(\mathbf{u}_*)\mathbf{u}_*=(\lambda_*+\delta_*^{\mathbf{u}})\mathbf{u}_*,~\mathcal{A}_{\mathbf{u}_*}(\mathbf{v}_*)\mathbf{v}_*=(\mu_*+\delta_*^{\mathbf{v}})\mathbf{v}_*.\]
	Hence, $(\mathbf{u}_*,\mathbf{v}_*)$ is a critical point of \eqref{equ:problem}.
	
	(ii) If $A_1$ and $A_2$ are irreducible nonsingular $M$-matrices and $\tau_2$ satisfies \eqref{equ:tau_for_positive}, then $\mathbf{u}_k$ and $\mathbf{v}_k$ are positive according to Theorem \ref{thm:positivity}. Thus the limit point $(\mathbf{u}_*,\mathbf{v}_*)$ of $\{(\mathbf{u}_k,\mathbf{v}_k)\}$ is a nonnegative critical point of $\eqref{equ:problem}$. The rest of the proof is similar to that in Theorem \ref{thm:alm}.
	
\end{proof}

\section{Extensions}\label{sec:general}
\subsection{The strategy for choosing $\lambda_k$ and $\mu_k$}
Through the above arguments, the convergence of Algorithm \ref{alg:ANNI} for \eqref{equ:problem} is a direct consequence of the sufficient descent of $f(\mathbf{u},\mathbf{v})$ in each iteration as \eqref{equ:whole_descent}. To reach this purpose, the key is to guarantee that both $\lambda_k$ and $\mu_k$ are bounded and  $\lambda_{\min}(J_{\mathbf{v}_k}(\mathbf{u}_k,\lambda_k))$, $\lambda_{\min}(J_{\mathbf{u}_{k+1}}(\mathbf{v}_k,\mu_k))\ge\gamma$ for some positive constant, according to the proof procedure. In Algorithm \ref{alg:ANNI}, these conditions are satisfied by choosing $\lambda_k,~\mu_k\in[\tau_1,\tau_2]$, where $$\tau_1<\tau_2<\min\{\lambda_{\min}( \mathcal{A}_{\mathbf{v}}(\mathbf{u})),\lambda_{\min}( \mathcal{A}_{\mathbf{u}}(\mathbf{v}))\}$$ for all unit $\mathbf{u},\mathbf{v}$.

In particular, if the two-component BEC problem \eqref{equ:problem} is a result of the  finite difference scheme discretizing \eqref{equ:spin-1/2}, see section \ref{sec:numerical} for details, then $A_1$ and $A_2$ are irreducible nonsingular $M$-matrices. We can select the $\lambda_k$ and $\mu_k$ as 
\begin{equation}\label{equ:select}
	\begin{aligned}
		\lambda_k&=\max\left\{\tau_1,\min\left(\frac{\mathcal{A}_{\mathbf{v}_k}(\mathbf{u}_k)\mathbf{u}_k}{\mathbf{u}_k}\right)\right\},\\
		\mu_k&=\max\left\{\tau_1,\min\left(\frac{\mathcal{A}_{\mathbf{u}_{k+1}}(\mathbf{v}_k)\mathbf{v}_k}{\mathbf{v}_k}\right)\right\},
	\end{aligned}
\end{equation}
where $\tau_1<\min\{\lambda_{\min}( \mathcal{A}_{\mathbf{v}}(\mathbf{u})),\lambda_{\min}( \mathcal{A}_{\mathbf{u}}(\mathbf{v}))\}$ for all positive unit $\mathbf{u},\mathbf{v}$. We can estimate
$$\lambda_{\min}( \mathcal{A}_{\mathbf{v}}(\mathbf{u}))=\lambda_{\min}\left(\beta_{11}\text{diag}(\mathbf{u}^{[2]})+(A_1+\beta_{12}\text{diag}(\mathbf{v}^{[2]}))\right)\ge \lambda_{\min}(A_1),$$  and let $\tau_1=\lambda_{\min}(A_1)-\epsilon$, where $\epsilon$ is a given positive constant. Since $\mathcal{A}_{\mathbf{v}}(\mathbf{u})$ and $\mathcal{A}_{\mathbf{u}}(\mathbf{v})$ are always positive definite in the BEC problems considered here, $\tau_1=0$ can be an option.

\begin{lemma}\label{lem:M of J}
	Let $\tau_1\le \tau_2$ satisfies \eqref{equ:tau_for_positive}, suppose $\{(\mathbf{u}_k,\mathbf{v}_k)\}$ is generated by ANNI with $\lambda_k$ and $\mu_k$ selected as \eqref{equ:select}, solving \eqref{equ:problem} with the finite difference scheme. Then $J_{\mathbf{v}_k}(\mathbf{u}_k,\lambda_k)$ is a positive definite irreducible $M$-matrix for any $k$.
\end{lemma}
\begin{proof}
	Since \eqref{equ:problem} is presented with the finite difference scheme, $A_1$ is a irreducible $M$-matrix. Then for a given unit vector $\mathbf{u}_k>0$,  $\mathcal{A}_{\mathbf{v}_k}(\mathbf{u}_k)=\beta_{11}\text{diag}(\mathbf{u}_k^{[2]})+(A_1+\beta_{12}\text{diag}(\mathbf{v}_k^{[2]}))$ is also a nonsingular irreducible $M$-matrix.
	
	If $\lambda_k=\tau_1$, we have
	$$J_{\mathbf{v}_k}(\mathbf{u}_k,\lambda_k)=\mathcal{A}_{\mathbf{v}_k}(\mathbf{u}_k)-\tau_1 I+2\beta_{11}\text{diag}(\mathbf{u}^{[2]}).$$ Then $J_{\mathbf{v}_k}(\mathbf{u}_k,\lambda_k)$ is obviously a positive definite $M$-matrix since $$\tau_1<\min\{\lambda_{\min}( \mathcal{A}_{\mathbf{v}}(\mathbf{u})),\lambda_{\min}( \mathcal{A}_{\mathbf{u}}(\mathbf{v}))\}$$ for all positive unit $\mathbf{u},\mathbf{v}$. Hence, $\mathbf{u}_{k+1}>0$ is a result of Theorem \ref{thm:positivity}.
	
	If $\lambda_k=\min\left(\frac{\mathcal{A}_{\mathbf{v}_k}(\mathbf{u}_k)\mathbf{u}_k}{\mathbf{u}_k}\right)$, for given $\mathbf{u}_k>0$, we have
	\[J_{\mathbf{v}_k}(\mathbf{u}_k,\lambda_k)\mathbf{u}_k=\mathcal{A}_{\mathbf{v}_k}(\mathbf{u}_k)\mathbf{u}_k-\lambda_k\mathbf{u}_k+2\beta_{11}\mathbf{u}_k^{[3]}\ge 2\beta_{11}\mathbf{u}_k^{[3]}>0.\]
	Then $J_{\mathbf{v}_k}(\mathbf{u}_k,\lambda_k)$ is also a positive definite irreducible $M$-matrix from Theorem \ref{thm:M-matrix}. On the other hand, 
	$$\mathbf{r}_{\mathbf{v}_k}(\mathbf{u}_k,\lambda_k)=\mathcal{A}_{\mathbf{v}_k}(\mathbf{u}_k)\mathbf{u}_k-\lambda_k\mathbf{u}_k\ge0.$$ We have $\mathbf{u}_k^TJ_{\mathbf{v}_k}^{-1}(\mathbf{u}_k,\lambda_k)\mathbf{r}_{\mathbf{v}_k}(\mathbf{u}_k,\lambda_k)\ge0$ by Theorem 2.3, then $\delta_k\ge 0$ from \eqref{equ:delta2}. Hence we can also obtain $\mathbf{u}_{k+1}>0$ as the proof for Theorem \ref{thm:positivity}. Therefore, $J_{\mathbf{v}_k}(\mathbf{u}_k,\lambda_k)$ is a positive definite irreducible $M$-matrix for any $k$ given $\mathbf{u}_0>0$.
\end{proof}

\begin{corollary}\label{cor:finite}
	Let $\tau_1\le \tau_2$ satisfying \eqref{equ:tau_for_positive}, suppose $\{(\mathbf{u}_k,\mathbf{v}_k)\}$ is generated by ANNI with $\lambda_k$ and $\mu_k$ selected as \eqref{equ:select}, solving \eqref{equ:problem} with the finite difference scheme. Then any limit point of this sequence is a positive critical point of $\eqref{equ:problem}.$
\end{corollary}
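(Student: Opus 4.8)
The plan is to verify that the specific choice of $\lambda_k$ and $\mu_k$ in \eqref{equ:select} satisfies the two hypotheses that drove the convergence analysis, namely that the sequences $\{\lambda_k\}$, $\{\mu_k\}$ are bounded and that $\lambda_{\min}(J_{\mathbf{v}_k}(\mathbf{u}_k,\lambda_k))$, $\lambda_{\min}(J_{\mathbf{u}_{k+1}}(\mathbf{v}_k,\mu_k))$ are uniformly bounded below by some $\gamma>0$. Once this is established, the conclusion follows immediately by invoking Theorem \ref{thm:global_convergence}(i) (for the critical-point property) together with Theorem \ref{thm:positivity} (for positivity), exactly as in the proof of Theorem \ref{thm:global_convergence}(ii). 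So the real content is a short verification of these two bullet points for the particular formula \eqref{equ:select}.

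First I would establish boundedness. The lower bound is built in: $\lambda_k\ge\tau_1$ by construction. For the upper bound, note that for a positive unit vector $\mathbf{u}$ and a positive vector $\mathbf{w}=\mathcal{A}_{\mathbf{v}}(\mathbf{u})\mathbf{u}$ one has the classical Collatz–Wielandt type estimate $\min_i (w_i/u_i)\le \mathbf{u}^T\mathbf{w} = \mathbf{u}^T\mathcal{A}_{\mathbf{v}}(\mathbf{u})\mathbf{u}\le \|\mathcal{A}_{\mathbf{v}}(\mathbf{u})\|$, and since $\mathcal{A}_{\mathbf{v}}(\mathbf{u})=\beta_{11}\mathrm{diag}(\mathbf{u}^{[2]})+A_1+\beta_{12}\mathrm{diag}(\mathbf{v}^{[2]})$ with $\|\mathbf{u}\|=\|\mathbf{v}\|=1$, its norm is bounded by $\beta_{11}+\|A_1\|+\beta_{12}=:\tau_2'$ uniformly. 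Hence $\lambda_k\le\max\{\tau_1,\tau_2'\}$, and the analogous bound holds for $\mu_k$ using $A_2$, $\beta_{22}$, $\beta_{12}$. So both sequences lie in a fixed bounded interval.

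Next I would establish the uniform positive-definiteness of the Jacobians. Here the finite-difference structure is what is used: since $A_1$ is an irreducible nonsingular $M$-matrix and $\mathbf{u}_k>0$ (by Theorem \ref{thm:positivity}, which applies because $\theta_k\in(0,1]$), the matrix $\mathcal{A}_{\mathbf{v}_k}(\mathbf{u}_k)\mathbf{u}_k>0$, so each ratio $(\mathcal{A}_{\mathbf{v}_k}(\mathbf{u}_k)\mathbf{u}_k)_i/(\mathbf{u}_k)_i$ is finite and positive, and $\lambda_k=\max\{\tau_1,\min_i(\cdot)\}$ is well defined. The key inequality is $\lambda_k\le\min_i\big(\mathcal{A}_{\mathbf{v}_k}(\mathbf{u}_k)\mathbf{u}_k/\mathbf{u}_k\big)$ when this min exceeds $\tau_1$, and in any case $\lambda_k\le\tau_2<\lambda_{\min}(\mathcal{A}_{\mathbf{v}_k}(\mathbf{u}_k))$ is \emph{not} automatic for this choice — rather, one uses directly that if $\mathbf{w}=\mathcal{A}_{\mathbf{v}_k}(\mathbf{u}_k)\mathbf{u}_k>0$ and $s=\min_i(w_i/u_{k,i})$, then $(\mathcal{A}_{\mathbf{v}_k}(\mathbf{u}_k)-sI)\mathbf{u}_k\ge 0$ with $\mathbf{u}_k>0$, so $\mathcal{A}_{\mathbf{v}_k}(\mathbf{u}_k)-sI$ is a (possibly singular) $M$-matrix and hence positive \emph{semidefinite}; adding $2\beta_{11}\mathrm{diag}(\mathbf{u}_k^{[2]})$ and using that $\mathbf{u}_k$ stays bounded away from having all small entries — more carefully, using irreducibility and the uniform bound $\|\mathbf{u}_k\|=1$ so that $\max_i u_{k,i}^2\ge 1/n$ — gives $J_{\mathbf{v}_k}(\mathbf{u}_k,\lambda_k)=\mathcal{A}_{\mathbf{v}_k}(\mathbf{u}_k)-\lambda_k I+2\beta_{11}\mathrm{diag}(\mathbf{u}_k^{[2]})$ positive definite. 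The uniform lower bound $\gamma>0$ on $\lambda_{\min}$ is the delicate point, since $\mathbf{u}_k$ could in principle approach the boundary of the positive orthant along a subsequence; I would argue that $\lambda_k\le\tau_2$ holds because $s=\min_i(w_i/u_{k,i})\le \mathbf{u}_k^T\mathcal{A}_{\mathbf{v}_k}(\mathbf{u}_k)\mathbf{u}_k$, which need not be below $\lambda_{\min}(\mathcal{A}_{\mathbf{v}_k})$; instead the cleaner route is to observe that the choice \eqref{equ:select} is designed precisely so that $\lambda_k\le\tau_2$ whenever $\tau_1<\tau_2<\lambda_{\min}$ — equivalently, one simply additionally caps $\lambda_k$ at $\tau_2$, or argues via $\lambda_k\le \max_i(w_i/u_{k,i})$ combined with the $M$-matrix semidefiniteness to retain $\lambda_{\min}(J)\ge 2\beta_{11}\min_i u_{k,i}^2$, and then invokes that along any convergent subsequence $\mathbf{u}_*>0$ by the boundary argument in the proof of Theorem \ref{thm:alm}, so $\liminf\lambda_{\min}(J_{\mathbf{v}_k})>0$.

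The main obstacle is exactly this last uniform lower bound $\gamma$: for the generic choice $\lambda_k\in[\tau_1,\tau_2]$ in Algorithm \ref{alg:ANNI} it was free, but for \eqref{equ:select} one must rule out $\lambda_{\min}(J_{\mathbf{v}_k}(\mathbf{u}_k,\lambda_k))\to 0$. I expect the resolution to combine (a) the positivity and boundedness of $\{\mathbf{u}_k\}$, (b) the $M$-matrix semidefiniteness of $\mathcal{A}_{\mathbf{v}_k}(\mathbf{u}_k)-\lambda_k I$ guaranteed by the Collatz–Wielandt choice, and (c) a compactness argument showing any limit point $\mathbf{u}_*$ is strictly positive (as in Theorem \ref{thm:alm}), so that the finitely many Jacobians along a convergent subsequence are uniformly positive definite; since every subsequence has a further convergent subsequence, this yields a global $\gamma>0$. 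With $\gamma$ and boundedness in hand, the descent estimate \eqref{equ:whole_descent} goes through verbatim, and Theorem \ref{thm:global_convergence} together with Theorem \ref{thm:positivity} delivers that every limit point is a positive critical point of \eqref{equ:problem}, completing the proof of the corollary.
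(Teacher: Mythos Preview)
Your overall strategy matches the paper's: establish (i) boundedness of $\lambda_k,\mu_k$ and (ii) a uniform lower bound $\lambda_{\min}(J_{\mathbf{v}_k}(\mathbf{u}_k,\lambda_k))\ge\gamma>0$, then invoke the descent argument of Theorem~\ref{thm:global_convergence} together with the positivity of Theorem~\ref{thm:positivity}. Your treatment of (i) and your identification of the main obstacle in (ii) are both correct, as is the key observation (your point (b)) that $\mathcal{A}_{\mathbf{v}_k}(\mathbf{u}_k)-\lambda_k I$ is a positive-semidefinite $M$-matrix whenever $\lambda_k$ equals the Collatz--Wielandt minimum, which yields $\lambda_{\min}(J_{\mathbf{v}_k}(\mathbf{u}_k,\lambda_k))\ge 2\beta_{11}\min_i(\mathbf{u}_k)_i^{2}$. (The aside about $\max_i u_{k,i}^2\ge 1/n$ is a false start: it is $\min_i u_{k,i}^2$ that controls $\lambda_{\min}$ of the diagonal part, and you rightly abandon that line.)

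The one genuine gap is in step (c). Invoking ``the boundary argument in the proof of Theorem~\ref{thm:alm}'' as written is circular: that argument obtains $\sum_{j\notin I}a_{ij}(\mathbf{u}_*)_j=0$ from the \emph{eigenvalue equation} $\mathcal{A}_{\mathbf{v}_*}(\mathbf{u}_*)\mathbf{u}_*=\lambda_*\mathbf{u}_*$ at the limit, i.e., from the critical-point property---which is exactly what you are trying to prove via the descent estimate, and the descent estimate requires $\gamma>0$ first. The paper breaks the circle by using directly the componentwise inequality that \emph{defines} $\lambda_k$: along a subsequence with $\lambda_{k_j}=\min_i\bigl((\mathcal{A}_{\mathbf{v}_{k_j}}(\mathbf{u}_{k_j})\mathbf{u}_{k_j})_i/(\mathbf{u}_{k_j})_i\bigr)>\tau_1$, one has $(\mathcal{A}_{\mathbf{v}_{k_j}}(\mathbf{u}_{k_j})\mathbf{u}_{k_j})_i\ge\lambda_{k_j}(\mathbf{u}_{k_j})_i$ for every $i$. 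Passing to the limit at an index $i\in I$ where $(\mathbf{u}_*)_i=0$ gives $(A_1\mathbf{u}_*)_i\ge 0$; but since $A_1$ is an $M$-matrix with nonpositive off-diagonal entries and $\mathbf{u}_*\ge 0$ is nonzero, irreducibility furnishes some $i\in I$ with $(A_1\mathbf{u}_*)_i<0$, a contradiction. This is the same irreducibility mechanism you have in mind, but the input is the Collatz--Wielandt \emph{inequality} (your ingredient~(b)), not the critical-point equation. Once you make that substitution explicit, your proof coincides with the paper's.
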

\begin{proof}
	From Lemma \ref{lem:M of J}, $J_{\mathbf{v}_k}(\mathbf{u}_k,\lambda_k)$ is always a positive definite and irreducible $M$-matrix and $\{\mathbf{u}_{k}\}$ is a positive sequence.
	We only need to prove that there exists a positive constant $\gamma$, such that for any $k$ $$\lambda_{\min}(J_{\mathbf{v}_k}(\mathbf{u}_k,\lambda_k))\ge\gamma.$$
	
	If $\lambda_k=\tau_1$, then $$\lambda_{\min}(J_{\mathbf{v}_k}(\mathbf{u}_k,\lambda_k))\ge\lambda_{\min}(\mathcal{A}_{\mathbf{v}_k}(\mathbf{u}_k))-\tau_1>\gamma_1>0$$ for some constant $\gamma_1$. 
	
	If $\lambda_k=\tau_1$ in only a finite number of iterations, we must have that $\min(\mathbf{u}_k)>\gamma_2$ for some positive constant $\gamma_2$. Otherwise, there exists a subsequence $\{\mathbf{u}_{k_j}\}$ converging to $\mathbf{u}_*$ with $(\mathbf{u_*})_i=0$ for any $i\in I$, where $I$ is a nonempty subset of $\{1,2,\cdots,n\}$ and $\{\mathbf{v}_{k_j}\}$ converging to $\mathbf{v}_*$. With $\lambda_{k_j}=\min\left(\frac{\mathcal{A}_{\mathbf{v}_{k_j}}(\mathbf{u}_{k_j})\mathbf{u}_{k_j}}{\mathbf{u}_{k_j}}\right)$, then we have for any $i\in I$,
	\[0\ge(A_1\mathbf{u}_*)_i=\lim_{j\rightarrow\infty} \left(\mathcal{A}_{\mathbf{v}_{k_j}}(\mathbf{u}_{k_j})\mathbf{u}_{k_j}\right)_i\ge\lim_{j\rightarrow\infty}\lambda_{k_j}(\mathbf{u}_{k_j})_i\ge\lim_{j\rightarrow\infty}\tau_1(\mathbf{u}_{k_j})_i =0,\]
	which contradicts the irreducibility of $A_1$. Therefore, there exists ${\gamma_2}$, such that $$\lambda_{\min}(J_{\mathbf{v}_k}(\mathbf{u}_k,\lambda_k))\ge\lambda_{\min}(2\beta_{11}\text{diag}(\mathbf{u}_k^{[2]}))\ge 2\beta_{11}{\gamma_2}^2$$ whenever $\lambda_k=\min\left(\frac{\mathcal{A}_{\mathbf{v}_k}(\mathbf{u}_k)\mathbf{u}_k}{\mathbf{u}_k}\right)$.
	
	Finally, we can conclude that there exists a positive constant $\gamma$ such that $$\lambda_{\min}(J_{\mathbf{v}_k}(\mathbf{u}_k,\lambda_k))\ge\gamma.$$ The rest of the proof is the same as that in section \ref{sec:ANNI}.
\end{proof}

\begin{remark}
	According to Corollary \ref{cor:finite}, the limit point $(\mathbf{u}_*,\mathbf{v}_*)$ of $\{(\mathbf{u}_k,\mathbf{v}_k)\}$ is a positive critical point of \eqref{equ:problem}, which satisfies the CNEPv\eqref{equ:CNEPv}. Then $\lambda_*=\lambda_{\min}(\mathcal{A}_{\mathbf{v}_*}(\mathbf{u}_*))=\min\left(\frac{\mathcal{A}_{\mathbf{v}_*}(\mathbf{u}_*)\mathbf{u}_*}{\mathbf{u}_*}\right)>\tau_1$. Therefore, if $(\mathbf{u}_k,\mathbf{v}_k)$ is sufficiently close to $(\mathbf{u}_*,\mathbf{v}_*)$, $\lambda_k$ can be computed by $\min\left(\frac{\mathcal{A}_{\mathbf{v}_k}(\mathbf{u}_k)\mathbf{u}_k}{\mathbf{u}_k}\right)$, which is the same as the $\lambda_k$ used in NNI. From this point of view, ANNI with \eqref{equ:select} may be promising to achieve a locally quadratically convergence rate as \cite{liu2020positivity} in some cases, which needs further study.
\end{remark}

\subsection{Multi-block problems}

We have discussed the algorithms and their convergence properties for the ``two-block" quartic-quadratic nonconvex minimization problem with spherical constraints in the form of \eqref{equ:two-block}:
\begin{equation}\label{equ:two-block}
	\begin{aligned}
		\underset{\mathbf{u}^1,\mathbf{u}^2\in\mathbb{R}^n}{\min}~&f(\mathbf{u}^1,\mathbf{u}^2)=\sum_{j=1}^2h(\mathbf{u}^j)+\beta_{12}\sum_{i=1}^n(\mathbf{u}^1)_i^2(\mathbf{u}^2)_i^2\\	
		\text{s.t.}~&{\mathbf{u}^1}^T\mathbf{u}^1=1,~{\mathbf{u}^2}^T\mathbf{u}^2=1,
	\end{aligned}
\end{equation}
where $$h(\mathbf{u})=\frac{\beta}{2}\sum_{i=1}^nu_i^4+\mathbf{u}^TA\mathbf{u}.$$ The ``two-block" here refers to the structure of $\sum_{j=1}^2h(\mathbf{u}^j)$. In this section, we will consider the  ``multi-block" spherical constraints nonconvex problem with more general $h(\mathbf{u})$ presented as
\begin{equation}\label{equ:multi-block}
	\begin{aligned}
		\underset{\mathbf{u}^j\in\mathbb{R}^n}{\min}~&f(\mathbf{u}^1,\mathbf{u}^2,\cdots,\mathbf{u}^m)=\sum_{j=1}^mh(\mathbf{u}^j)+\sum_{j,s=1,j\neq s}^m\frac{\beta_{js}}{2}\sum_{i=1}^n(\mathbf{u}^j)_i^2(\mathbf{u}^s)_i^2\\	
		\text{s.t.}~&{\mathbf{u}^j}^T\mathbf{u}^j=1,~j=1,2,\cdots,m,
	\end{aligned}	
\end{equation}
where
\begin{equation*}
	h(\mathbf{u})=2\tilde{h}(\mathbf{u})+\mathbf{u}^TA\mathbf{u}.
\end{equation*}

It is straightforward to generalize ANNI to \eqref{equ:multi-block} under mild conditions. At the $k$th iteration, we apply the one-step modified NNI to $\mathbf{u}^j,~j=1,2,\cdots,m$ alternatively as Algorithm \ref{alg:ANNI}, which consists of four main steps:
\begin{equation*}\label{equ:ANNI}
	\begin{aligned}
		\lambda_k^j&\in[\tau_1,\tau_2],\\
		F'_{\mathbf{v}_k^j}(\mathbf{u}_k^j,\lambda_k^j)\begin{bmatrix}\Delta\mathbf{u}_k^j\\\delta_k^j\end{bmatrix}&=-F_{\mathbf{v}_k^j}(\mathbf{u}_k^j,\lambda_k^j),\\
		d_k^j(\theta_k^j)&={f}_{\mathbf{v}^j_k}(\mathbf{u}_{k+1}^j)-{f}_{\mathbf{v}^j_k}(\mathbf{u}_{k}^j)<0,\\
		\mathbf{u}_{k+1}^j&=\frac{\mathbf{u}_k^j+\theta_k^j\Delta\mathbf{u}_k^j}{\|\mathbf{u}_k^j+\theta_k^j\Delta\mathbf{u}_k^j\|},
	\end{aligned}
\end{equation*}
where $\mathbf{v}_k^j=(\mathbf{u}_{k+1}^1,\cdots,\mathbf{u}_{k+1}^{j-1},\mathbf{u}_{k}^{j+1},\cdots,\mathbf{u}_k^m)$. Let 
\[\mathcal{A}_{\mathbf{v}_k^j}(\mathbf{u})=\text{diag}\left(\frac{\nabla\tilde{h}(\mathbf{u})}{\mathbf{u}}\right)+A+\sum_{s\neq j}\frac{\beta_{sj}+\beta_{js}}{2}\text{diag}\left((\mathbf{v}_k^j)_s^{[2]}\right).\]
Then $\tau_1,~\tau_2$, $F'_{\mathbf{v}_k^j}(\mathbf{u},\lambda)$, $\mathbf{r}_{\mathbf{v}_k^j}(\mathbf{u},\lambda),~J_{\mathbf{v}_k^j}(\mathbf{u},\lambda)$ and ${f}_{\mathbf{v}_k^j}(\mathbf{u})$ are defined correspondingly. Hence, it is natural to obtain the following theorem about the convergence result of ANNI for \eqref{equ:multi-block}.

\begin{theorem}\label{thm:multi-block}
	Suppose that $\tilde{h}$ is twice continuously differentiable and $\nabla^2\tilde{h}$ is locally Lipschitz continuous over the unit ball. Given any unit positive initial points $\mathbf{u}^j,~j=1,2,\cdots,m$, assume the sequence $\{(\mathbf{u}_k^1,\cdots,\mathbf{u}_k^m)\}$ is generated by ANNI for \eqref{equ:multi-block}, we have the following results:
	\begin{enumerate}[(i)]
		\item Any limit point of the sequence $\{(\mathbf{u}_k,\mathbf{v}_k)\}$ is a critical point of \eqref{equ:multi-block}.
		\item If $A$ is an irreducible nonsingular $M$-matrix, $J_{\mathbf{v}}(\mathbf{u},\lambda)$ is a nonsingular $M$-matrix, and $\nabla^2\tilde{h}(\mathbf{u})\mathbf{u}-\nabla\tilde{h}(\mathbf{u})>0$ for any $\mathbf{u}>0$, then the limit point is a nonnegative critical point of \eqref{equ:multi-block}. 
	\end{enumerate}
\end{theorem}
\begin{proof}
	From the proof process of Theorem \ref{thm:global_convergence}, it is critical to obtain the sufficient descent of $d_k^j(\theta_k^j)$. And the key step for this is to obtain the formulation as \eqref{equ:measure_d}. For any $\mathbf{u}_k^j,~\|\mathbf{u}_k^j\|=1$, let $\mathbf{w}_{k+1}^j=\mathbf{u}_k^j+\theta_k^j\Delta\mathbf{u}_k^j$, we have
	\begin{equation*}
		\begin{aligned}
			&d_k^j(\theta_k^j)={f}_{\mathbf{v}^j_k}(\mathbf{u}_{k+1}^j)-{f}_{\mathbf{v}^j_k}(\mathbf{u}_{k}^j)\\
			=&\left[(\mathcal{A}_{\mathbf{v}_{k}^j}(\mathbf{u}_{k+1}^j)\mathbf{u}_{k+1}^j)^T\mathbf{u}_{k+1}^j-(\mathcal{A}_{\mathbf{v}_{k}^j}(\mathbf{u}_{k}^j)\mathbf{u}_{k}^j)^T\mathbf{u}_{k}^j\right]\\
			&+\left[2\tilde{h}(\mathbf{u}_{k+1}^j)-\nabla\tilde{h}(\mathbf{u}_{k+1}^j)^T\mathbf{u}_{k+1}^j-\left(2\tilde{h}(\mathbf{u}_{k}^j)-\nabla\tilde{h}(\mathbf{u}_{k}^j)^T\mathbf{u}_{k}^j\right)\right]\\
			=&\left[\mathbf{r}_{\mathbf{v}_k^j}(\mathbf{u}_{k+1}^j,\lambda_k^j)^T\mathbf{u}_{k+1}^j-\mathbf{r}_{\mathbf{v}_k^j}(\mathbf{u}_{k}^j,\lambda_k^j)^T\mathbf{u}_{k}^j\right]
			+\left[\left(\nabla\tilde{h}(\mathbf{u}_k^j)-\nabla^2\tilde{h}(\mathbf{u}_k^j)\mathbf{u}_k^j\right)^T(\mathbf{u}_{k+1}^j-\mathbf{u}_k^j)+\bar{E}_k^j\right]\\
			=&[\mathbf{r}_{\mathbf{v}_k^j}(\mathbf{u}_{k}^j,\lambda_k^j)+J_{\mathbf{v}_k^j}(\mathbf{u}_{k}^j,\lambda_k^j)(\mathbf{u}_{k+1}^j-\mathbf{u}_k^j)+E_k^j]^T\mathbf{u}_{k+1}^j-\mathbf{r}_{\mathbf{v}_k^j}(\mathbf{u}_{k}^j,\lambda_k^j)^T\mathbf{u}_{k}^j\\
			&-\left[(J_{\mathbf{v}_k^j}(\mathbf{u}_{k}^j,\lambda_k^j)\mathbf{u}_k^j-\mathbf{r}_{\mathbf{v}_k^j}(\mathbf{u}_{k}^j,\lambda_k))^T(\mathbf{u}_{k+1}^j-\mathbf{u}_k^j)-\bar{E}_k^j\right]\\
			=&2\left(\frac{1}{\|\mathbf{w}_{k+1}^j\|}-1\right)\mathbf{r}_{\mathbf{v}_k^j}(\mathbf{u}_{k}^j,\lambda_k^j)^T\mathbf{u}_{k}^j+\frac{2\theta_k^j}{\|\mathbf{w}_{k+1}^j\|}\mathbf{r}_{\mathbf{v}_k^j}(\mathbf{u}_{k}^j,\lambda_k^j)^T\Delta\mathbf{u}_k^j\\
			&+(\mathbf{u}_{k+1}^j-\mathbf{u}_{k}^j)^TJ_{\mathbf{v}_k^j}(\mathbf{u}_{k}^j,\lambda_k^j)(\mathbf{u}_{k+1}^j-\mathbf{u}_{k}^j)+E_k^T\mathbf{u}_{k+1}^j+\bar{E}_k^j\\
		\end{aligned}
	\end{equation*}
	and $\mathbf{r}_{\mathbf{v}_k^j}(\mathbf{u}_{k}^j,\lambda_k^j)^T\Delta\mathbf{u}_k^j=-(\Delta\mathbf{u}_k^j)^TJ_{\mathbf{v}_k^j}(\mathbf{u}_{k}^j,\lambda_k^j)\Delta\mathbf{u}_k^j<0$. The rest of the proof is similar to that of ANNI for \eqref{equ:two-block}, so we omit it.
\end{proof}

\begin{remark}
	It is obvious that $\tilde{h}(\mathbf{u})=\sum_{i=1}^nu_i^p$ for any $p>2$ satisfies the conditions stated in Theorem \ref{thm:multi-block} (ii). According to \cite[Lemma 3.1]{du2022newton}, these conditions are also satisfied by $\tilde{h}(\mathbf{u})=(\mathbf{u}^{[2]})^TA\mathbf{u}^{[2]}$ which comes from the modified GPEs \cite{bao2019computing}, whose gradient $\nabla\tilde{h}(\mathbf{u})=(A\mathbf{u}^{[2]})\circ\mathbf{u}$ and ``$\circ$" is the Hadamard product, as well as $\tilde{h}(\mathbf{u})=\sum_{i=1}^n(u_i^2-\ln(1+\frac{u_i^2}{a_i}))$ with $a_i>0$ arising from the saturable nonlinear Schr\"odinger equation \cite{liu2020positivity}. 
\end{remark}


\section{Numerical experiments}\label{sec:numerical}
In this section, we evaluate the numerical performance of ALM \eqref{equ:ALM} and ANNI. We first compare the ALM \eqref{equ:ALM} and ANNI with the popular gradient flow method implemented by the backward Euler finite difference BEFD \cite{bao2004ground},  through testing some spin-1/2 BEC problems discretized by the finite difference method. Then we apply ALM \eqref{equ:ALM} and ANNI to compute the ground states of special spin-1 and spin-2 BEC problems discretized by the Fourier pseudo-spectral scheme. Since it has been pointed out that the state-of-art Riemannian optimization algorithm ARNT \cite{tian2020ground} is more efficient than other methods proposed for computing ground states of spin-$F$ multi-component BECs in the literature, a comparison between ARNT with our proposed algorithms is also presented. All experiments were performed on a Lenovo laptop with Gen Intel(R) Core(TM) i7 at 2.3 GHz and 32 GB memory using Matlab R2022a.

\subsection{Implementation details}
We use NNI \cite{du2022newton} and NRI \cite{huang2022newton} to solve each subproblem in ALM \eqref{equ:ALM} for problems with the finite difference scheme and the pseudo-spectral scheme, respectively. Unless specifically mentioned, the stopping criterion used for the numerical tests can be described as $\|\text{grad}f(\mathbf{u}_k,\mathbf{v}_k)\|_2\le 10^{-6}$, where $\|\text{grad}f(\mathbf{u}_k,\mathbf{v}_k)\|_2$ is the norm of the Riemannian gradient, which can be calculated as:
\[2\sqrt{\|\mathcal{A}_{\mathbf{v}_k}(\mathbf{u}_k)\mathbf{u}_k-\left(\mathbf{u}_k^T\mathcal{A}_{\mathbf{v}_k}(\mathbf{u}_k)\mathbf{u}_k\right)\mathbf{u}_k\|^2+\|\mathcal{A}_{\mathbf{u}_k}(\mathbf{v}_k)\mathbf{v}_k-\left(\mathbf{v}_k^T\mathcal{A}_{\mathbf{u}_k}(\mathbf{v}_k)\mathbf{v}_k\right)\mathbf{v}_k\|^2}.\]
The maximum number of iterations for ANNI and ALM is 200. For convenience, the initial data is $\mathbf{u}_0=\frac{1}{\sqrt{n}}[1,\cdots,1]^T$, $\mathbf{v}_0=\frac{1}{\sqrt{n}}[1,\cdots,1]^T\in\mathbb{R}^n$.

\subsubsection{Preconditioner}
The main step in ANNI is to solve the linear system \eqref{equ:linear} involving $J_{\mathbf{v}}(\mathbf{u},\lambda)$. According to \eqref{equ:delta2} and \eqref{equ:Delta2}, we can compute \eqref{equ:linear} through solving 
\begin{equation}\label{equ:computing}
	J_{\mathbf{v}}(\mathbf{u},\lambda)\mathbf{y}_1=\mathbf{u},~J_{\mathbf{v}}(\mathbf{u},\lambda)\mathbf{y}_2=\mathbf{r}_{\mathbf{v}}(\mathbf{u},\lambda).
\end{equation}
For the BEC problem, note that $J_{\mathbf{v}}(\mathbf{u},\lambda)$ is the sum of a discretized negative Laplace operator matrix and diagonal matrices. Hence, we compute \eqref{equ:computing} as follows:
\begin{enumerate}[(1)]
	\item If we consider problems with the finite difference scheme, $J_{\mathbf{v}}(\mathbf{u},\lambda)$ has the nice block tridiagonal structure and there are mature methods such as block LU \cite{golub2013matrix} to compute it directly. 
	\item As for the discretized problem with the Fourier pseudo-spectral scheme, we use the conjugate gradient (CG) method \cite[chapter 7.2.4]{yang2010nonlinear} to compute it, and different  precondition matrices are used to improve the performance of algorithms. We simply list three types of preconditioner, which are usually used solving BEC problems \cite{antoine2017efficient} as follows.
\end{enumerate}

\begin{itemize}
	\item  \textbf{Kinetic energy preconditioner.} $P=(c-\Delta/2)^{-1}$, where $\Delta$ is a discretized matrix of the Laplace operator and $c$ is a positive shifting constant to construct an invertible operator. In our experiments, $c$ is selected to be 3 or 30.
	\item \textbf{Potential energy preconditioner.} We just give the form of such preconditioner for $\mathbf{u}$. $P_{V}=\left(c+V+\beta_{12}\text{diag}(\mathbf{v}^{[2]})+\beta_{11}\text{diag}(\mathbf{u}^{[2]})-\lambda I\right)^{-1}$, where $c$ is the same as the one in $P_{\Delta}$, $V$ is the discretized diagonal matrix of potential function and $\lambda$ is given as in Algorithm \ref{alg:ANNI}. $P_{V}$ is diagonal in real space.
	\item \textbf{Combined preconditioner.} $P_C=P_V^{1/2}PP_V^{1/2}$.
\end{itemize}

The preconditioner will affect not only the convergence rate but also the efficiency of solving linear systems. There is a trade-off between them. For most of our tests, we find that the combined preconditioner is the most stable and effective. While for the examples in 3D cases with a large computational domain, the potential energy preconditioner seems to perform better. We have not figured out the best preconditioner for all examples. For a detailed discussion of the properties and numerical performance of these three preconditioners, please refer to \cite{antoine2017efficient}.


\subsubsection{Choice of $\lambda_k$ and $\mu_k$}
In addition to the linear system solver, $\lambda_k$ and $\mu_k$ are the major factors that influence the convergence rate of ANNI by affecting the condition of $J_{\mathbf{v}_k}(\mathbf{u}_k,\lambda_k)$ and $J_{\mathbf{u}_k}(\mathbf{v}_k,\mu_k)$. It is a key step in our Algorithm \ref{alg:ANNI} to select an appropriate $\lambda_k$ and $\mu_k$. According to the convergence requirement, $\lambda_k$ and $\mu_k$ should be bounded, such that $\lambda_{\min}(J_{\mathbf{v}_k}(\mathbf{u}_k,\lambda_k))$, $\lambda_{\min}(J_{\mathbf{u}_{k+1}}(\mathbf{v}_k,\mu_k))\ge\gamma$ for some positive constant. Although we can estimate a suitable one satisfying $\tau_1\le\lambda_k,~\mu_k<\min\{\lambda_{\min}( \mathcal{A}_{\mathbf{v}}(\mathbf{u})),\lambda_{\min}( \mathcal{A}_{\mathbf{u}}(\mathbf{v}))\}$ for all unit $\mathbf{u}$ and $\mathbf{v}$, it may result in slow convergence in practice. 

In this subsection, we discuss the strategy of selecting $\lambda_k$ and $\mu_k$ for ANNI and present parts of $\lambda_k$ we have tried in Example \ref{exm:spin-1} with $U=[-16,16]^2$, $\beta_0=300$, $\beta_1=100$, $M=0$. Figure \ref{fig:tau} shows the results of ANNI with different strategies for choosing $\lambda_k$. We have compared three types of strategies. In Figure \ref{fig:tau}, `$\lambda_k=0$' represents that we use a fixed $\lambda_k=\tau_1$, and $\tau_1=-$1e1 (0, 5, etc.) represents that $\lambda_k=\max\left\{\tau_1,\min\left(\frac{\mathcal{A}_{\mathbf{v}_k}(\mathbf{u}_k)\mathbf{u}_k}{\mathbf{u}_k}\right)\right\}$. Then $\tau_1=$`-Inf' indicates that $\lambda_k=\min\left(\frac{\mathcal{A}_{\mathbf{v}_k}(\mathbf{u}_k)\mathbf{u}_k}{\mathbf{u}_k}\right)$ as used by Algorithm \ref{alg:NNI} (NNI). `st3' means that we set 
$$\lambda_k=\max\left\{\lambda_k,\min\left(\frac{\mathcal{A}_{\mathbf{v}_k}(\mathbf{u}_k)\mathbf{u}_k}{\mathbf{u}_k}\right)\right\}.$$ In general, the strategy setting $\lambda_k=\max\left\{\tau_1,\min\left(\frac{\mathcal{A}_{\mathbf{v}_k}(\mathbf{u}_k)\mathbf{u}_k}{\mathbf{u}_k}\right)\right\}$ will be more robust and efficient than a fixed $\lambda_k=\tau_1$. The larger $\tau_1$ is, the faster the algorithm converges while $J_{\mathbf{v}_k}(\mathbf{u}_k,\lambda_k)$ remains positive definite. On the other hand, the average iterations solving each linear system may also increase as $\tau_1$ increases. `st3' shows the best convergence rate and results in efficient solution of linear systems for most of our numerical experiments. In addition, for some cases set $\lambda_k=\mathbf{u}_k^T\mathcal{A}_{\mathbf{v}_k}(\mathbf{u}_k)\mathbf{u}_k$ when 
$\min\left(\frac{\mathcal{A}_{\mathbf{v}_k}(\mathbf{u}_k)\mathbf{u}_k}{\mathbf{u}_k}\right)<0$ may improve the results. 
\begin{figure}[htb]
	\centering
	\subfigure[nrmG versus the number of iterations]{
		\begin{minipage}[t]{0.6\linewidth}
			\centering
			\includegraphics[width=3in]{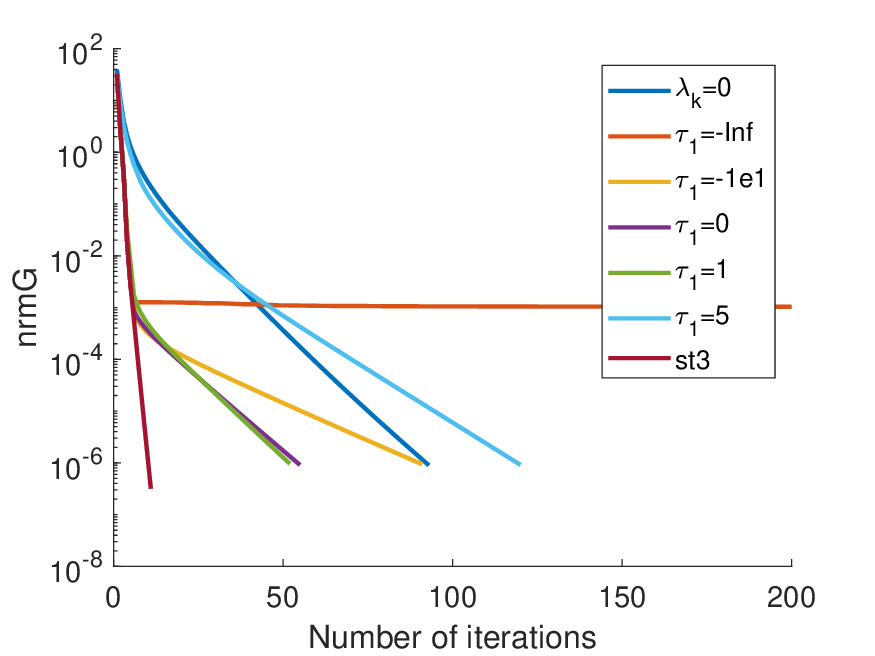}
		\end{minipage}%
	}%
	\subfigure[objective value versus the number of iterations]{
		\begin{minipage}[t]{0.5\linewidth}
			\centering
			\includegraphics[width=3in]{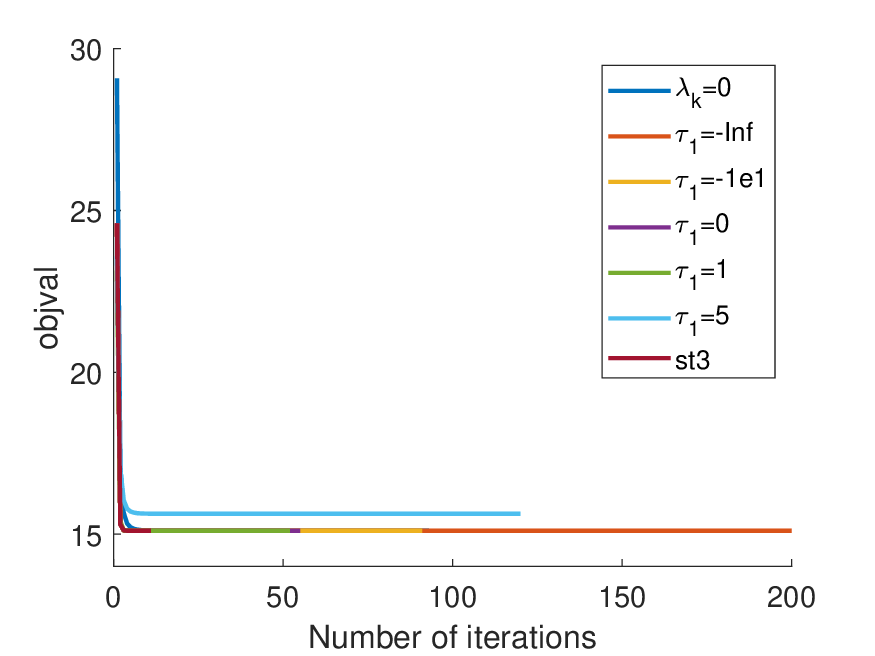}
		\end{minipage}%
	}%
	\centering
	\caption{nrmG and objective value versus the number of iterations with different $\lambda_k$.}
	\label{fig:tau}
\end{figure}


It should be noted that the performance of ANNI applying for the Fourier pseudo-spectral scheme discretized problem is greatly influenced by the computation of \eqref{equ:computing}. See Figure \ref{fig:convergence}, ANNI seems to have the possibility to converge nearly quadratically, while in the worst case it may converge quite slowly, for the discretized BEC problem in 3D cases with a relatively large computational domain $U$. One important reason for this phenomena can be selecting an inappropriate $\lambda_k$ or $\mu_k$ and the inexact computation of \eqref{equ:computing} by CG. Better preconditioners or other efficient linear system solvers, and in particular a strategy for selecting better $\lambda_k,~\mu_k$ adaptively worth further study, which we would like to explore in our follow-up work.
\begin{figure}[htbp]
	\centering
	\subfigure[converge quadratically]{
		\begin{minipage}[t]{0.6\linewidth}
			\centering
			\includegraphics[width=3in]{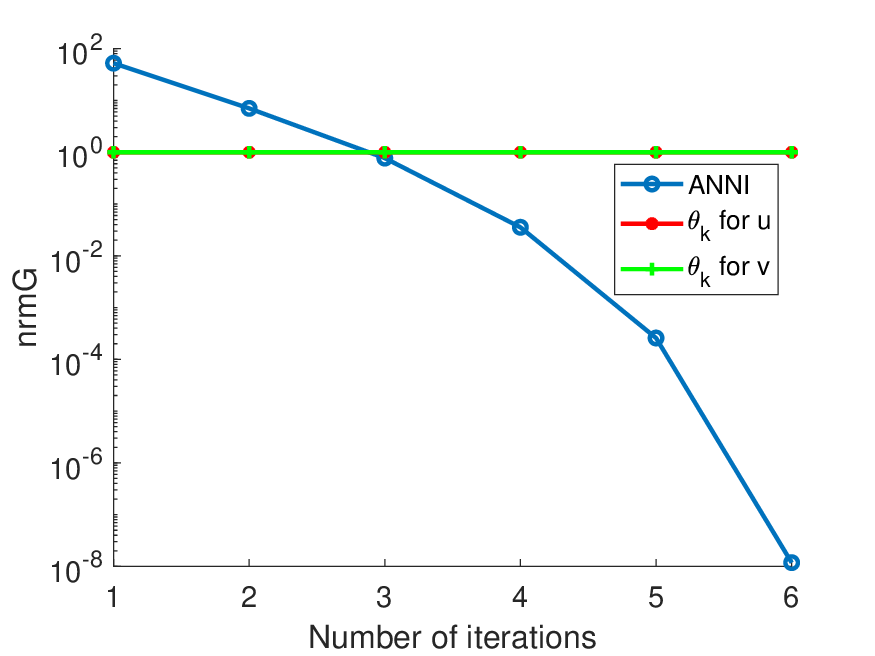}
		\end{minipage}%
	}%
	\subfigure[converge rather slowly]{
		\begin{minipage}[t]{0.5\linewidth}
			\centering
			\includegraphics[width=3in]{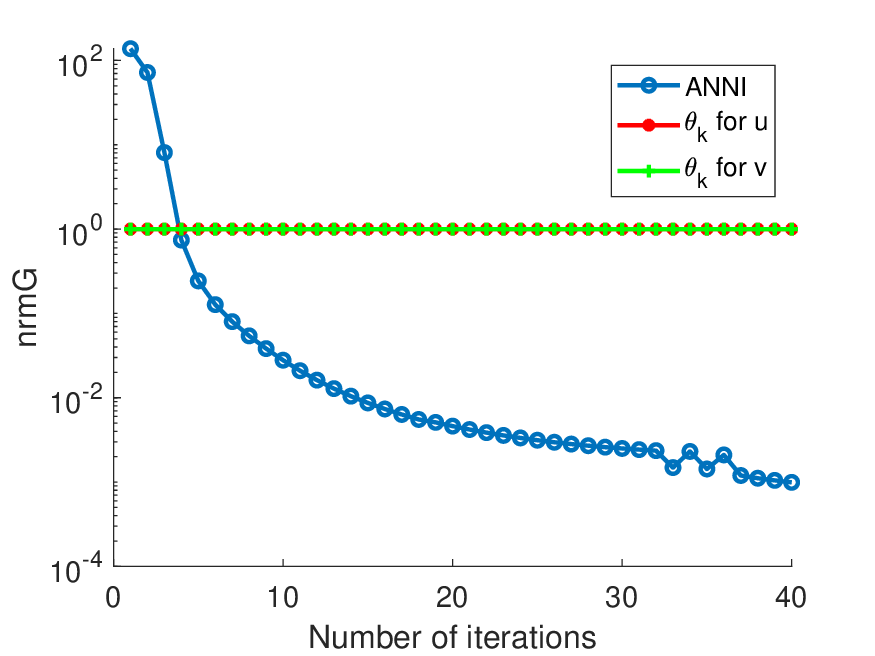}
		\end{minipage}%
	}%
	\centering
	\caption{The convergence behaviour of ANNI. The left is for the 2D case of Example  \ref{exm:spin-1} with $U=[-16,16]^2,~\beta_0=0.3,~\beta_1=0.1,~M=0.9$. In this case, ANNI shows quadratically convergence. The right is for the 3D case of Example \ref{exm:spin-2} with $U=[-16,16]^3,~\beta_0=243,~\beta_1=12.1,~\beta_2=-13,~M=0.5$. In this case, ANNI converges rather slowly.}
	\label{fig:convergence}
\end{figure}

Nonetheless, we point out that regardless of the speed of convergence, the step size $\theta_k$ is almost always equal to one and hardly needs halving steps, as presented in Figure \ref{fig:convergence}. Whether it will eventually converge to 1 theoretically needs further analysis.

\subsection{Application in pseudo spin-1/2 BECs}

The two-component pseudo spin-1/2 BEC problem \eqref{equ:spin-1/2} can be transformed into \eqref{equ:problem} via different discretization schemes. As an example, we introduce the finite difference discretization of the energy functional \eqref{equ:spin-1/2} for the 1D case. Extensions to 2D and 3D are straightforward for tensor grids and details are omitted here for brevity. Due to the external trapping potential, the ground state of \eqref{equ:spin-1/2} decays exponentially as $|\mathbf{x}|\rightarrow\infty$. Thus we can truncate the energy functional from the whole space $\mathbb{R}^d$ to a bounded computational domain $U$ which is large enough such that the truncation error is negligible.

Let $U=[-L,L]$, $h=2L/n$ be the spatial mesh size and denote $x_j=-L+jh$ for $j=0,1,\cdots,n$. Let $\phi_{jl}$ be the numerical approximation of $\phi_l(x_j)$ for $j=0,1,\cdots,n$ and $l=1,2$ satisfying $\phi_{0l}=\phi_{nl}=0$, and denote $\mathbf{u}=(\sqrt{h/\alpha}\phi_{j1})$, $\mathbf{v}=(\sqrt{h/(1-\alpha)}\phi_{j2})\in\mathbb{R}^{n-1}~(j=1,\cdots,n-1)$. We have
\begin{equation*}
	\begin{aligned}
		E(\Phi)&\approx \sum_{j=0}^{n-1}\int_{x_j}^{x_{j+1}}\left[\sum_{l=1}^2\left(-\frac{1}{2}\phi_l(x)\phi_l^{''}(x)+V(x)\phi_l(x)^2\right)+\frac{\beta_{11}}{2}\phi_1(x)^4+\frac{\beta_{22}}{2}\phi_2(x)^4+\beta_{12}\phi_1(x)^2\phi_2(x)^2\right]dx\\
		&\approx h\sum_{j=1}^{n-1}\left[\sum_{l=1}^2\left(-\frac{1}{2}\phi_{jl}\frac{\phi_{j+1,l}-2\phi_{jl}+\phi_{j-1,l}}{h^2}+V(x_j)\phi_{jl}^2\right)+\frac{\beta_{11}}{2}\phi_{j1}^4++\frac{\beta_{22}}{2}\phi_{j2}^4+\beta_{12}\phi_{j1}^2\phi_{j2}^2\right]\\
		&=h\left[\frac{\alpha}{h}\mathbf{u}^TA\mathbf{u}+\frac{\beta_{11}}{2}\frac{\alpha^2}{h^2}\sum_{j=1}^{n-1}u_j^4
		+\frac{(1-\alpha)}{h}\mathbf{v}^TA\mathbf{v}+\frac{\beta_{22}}{2}\frac{(1-\alpha)^2}{h^2}\sum_{j=1}^{n-1}v_j^4+\beta_{12}\frac{\alpha(1-\alpha)}{h^2}\sum_{j=1}^{n-1}u_j^2v_j^2\right]\\
		&=\mathbf{u}^TA_1\mathbf{u}+\frac{\alpha^2\beta_{11}}{2h}\sum_{j=1}^{n-1}u_j^4+
		\mathbf{v}^TA_2\mathbf{v}+\frac{(1-\alpha)^2\beta_{22}}{2h}\sum_{j=1}^{n-1}v_j^4
		+\frac{\alpha(1-\alpha)\beta_{12}}{h}\sum_{j=1}^{n-1}u_j^2v_j^2,
	\end{aligned}
\end{equation*}
where $A_1=\alpha A$, $A_2=(1-\alpha)A$ and $A=(a_{jk})\in\mathbb{R}^{(n-1)\times(n-1)}$ given by
\begin{equation*}
	a_{jk}=
	\left\{
	\begin{array}{lrc}
		\frac{1}{h^2}+V(x_j), &j=k,\\
		-\frac{1}{2h^2},& |j-k|=1,\\
		0, &\text{otherwise}.
	\end{array}
	\right.
\end{equation*}
The constraints can be discretized as $\mathbf{u}^T\mathbf{u}=1$ and $\mathbf{v}^T\mathbf{v}=1$ similarly. Therefore $A_1$ and $A_2$ of \eqref{equ:problem} are irreducible nonsingular $M$-matrices via such discretization.

We compare the performance of ALM \eqref{equ:ALM} using Algorithm \ref{alg:NNI} (NNI) to solve subproblems (denoted by ALM$_{NNI}$), ANNI and BEFD \cite{bao2004ground} by testing on spin-1/2 BECs in 1D case stated at Example \ref{exm:1}. For BEFD , we set the step size $\Delta t$ to be $10$, and add another stopping rule as $\|\Phi_{k+1}-\Phi_k\|\le 10^{-7}$ or the iteration number $k\ge 1000$. 

In the subsequent tables, the columns ``f", ``nrmG" and ``CPU(s)" display the final objective function value, the final norm of the Riemannian gradient, and the total CPU time each algorithm spent to reach the stopping criterion. The column ``Iter" reports the number of iterations (the total numbers of inner iterations).

\begin{example}\label{exm:1}
	We consider the spin-1/2 BEC \cite{bao2004ground,bao2011ground}:
	\begin{itemize}
		\item 1D, $V(x)=\frac{1}{2}x^2+24\cos^2(x)$, $\beta_{11}:\beta_{12}:\beta_{22}=(1.03:0.97:1)\beta$ with $\beta>0$ in \eqref{equ:spin-1/2} , $U=[-16,16]$,  $n=2^{10}$. 
	\end{itemize}
\end{example}
The comparison results for ANNI, ALM$_{NNI}$ and BEFD are reported in Table \ref{tab:BEFD}. It does not come as a surprise that ANNI takes fewer time than ALM$_{NNI}$ since ALM \eqref{equ:ALM} conducts more iteration steps solving subproblems. Both ANNI and ALM \eqref{equ:ALM} show much higher efficiency than BEFD. An interesting phenomenon is that $$\hat{\lambda}_k=\min\left(\frac{\mathcal{A}_{\mathbf{v}_k}(\mathbf{u}_k)\mathbf{u}_k}{\mathbf{u}_k}\right)$$ might be still monotonically increasing as in NNI \cite{du2022newton}, see Figure \ref{fig:monotonic_1d}, where $\bar{\lambda}_k=\mathbf{u}_k^{T}\mathcal{A}_{\mathbf{v}_k}(\mathbf{u}_k)\mathbf{u}_k$ and $\hat{\mu}_k,~\bar{\mu}_k$ are defined similarly for $\mathbf{v}$. However, for problems with the pseudo-spectral discretization, the computed numerical $\hat{\lambda}_k$ will no longer have such a monotonous nature and even will not be positive anymore. The wave functions of the gound states computed by ANNI are given in Figure \ref{fig:spin-1/2}. 


\begin{table}[htb]
	\begin{center}
		\caption{Comparisons between ANNI, ALM$_{NNI}$ and BEFD for spin-1/2 BECs in 1D}
		\label{tab:BEFD}
		\setlength{\tabcolsep}{1mm}{
			\begin{tabular}{|cclccccclclcc|}
				\hline
				\multicolumn{1}{|c|}{\multirow{2}{*}{$\alpha$}} & \multicolumn{4}{c|}{ANNI}                             & \multicolumn{4}{c|}{ALM$_{NNI}$}                                   & \multicolumn{4}{c|}{BEFD}         \\ \cline{2-13} 
				\multicolumn{1}{|c|}{}                          & f       & nrmG  & Iter & \multicolumn{1}{c|}{CPU(s)} & f       & nrmG   & Iter     & \multicolumn{1}{l|}{CPU(s)}  & f       & nrmG   & Iter & CPU(s)  \\ \hline
				\multicolumn{13}{|c|}{$\beta=10$}                                                                                                                                                                        \\ \hline
				\multicolumn{1}{|c|}{0.2}                       & 6.8651  & 8.1e-8 & 7    & \multicolumn{1}{c|}{0.4089} & 6.8651  & 4.4e-7 & 5(23)   & \multicolumn{1}{l|}{0.6093}  & 6.8651  & 7.3e-7 & 34   & 1.3209  \\
				\multicolumn{1}{|c|}{0.5}                       & 6.8670  & 2.9e-7 & 7    & \multicolumn{1}{c|}{0.3708} & 6.8670  & 4.4e-7 & 6(27)   & \multicolumn{1}{l|}{0.7121}  & 6.8670  & 6.7e-7 & 34   & 1.3071  \\
				\multicolumn{1}{|c|}{0.8}                       & 6.9029  & 1.7e-7 & 6    & \multicolumn{1}{c|}{0.3824} & 6.9029  & 5.7e-7 & 5(22)   & \multicolumn{1}{l|}{0.5504}  & 6.9029  & 9.8e-7 & 34   & 1.3356  \\
				\multicolumn{1}{|c|}{0.9}                       & 6.9224  & 9.9e-8 & 6    & \multicolumn{1}{c|}{0.2928} & 6.9224  & 3.9e-8 & 5(20)   & \multicolumn{1}{l|}{0.5052}  & 6.9224  & 7.4e-7 & 35   & 1.3776  \\ \hline
				\multicolumn{13}{|c|}{$\beta=100$}                                                                                                                                                                       \\ \hline
				\multicolumn{1}{|c|}{0.2}                       & 17.1842 & 9.9e-7 & 87   & \multicolumn{1}{c|}{4.5023} & 17.1842 & 9.9e-7 & 108(323) & \multicolumn{1}{l|}{8.1111} & 17.1842 & 1.3e-6 & 739  & 28.5574 \\
				\multicolumn{1}{|c|}{0.5}                       & 17.1901 & 9.2e-7 & 134  & \multicolumn{1}{c|}{6.8237} & 17.1901 & 9.7e-7 & 129(368) & \multicolumn{1}{l|}{9.2166} & 17.1901 & 2.4e-6 & 437  & 16.9251 \\
				\multicolumn{1}{|c|}{0.8}                       & 17.3046 & 1.0e-6 & 95   & \multicolumn{1}{c|}{4.8613} & 17.3046 & 9.5e-7 & 87(279)  & \multicolumn{1}{l|}{6.9934} & 17.3046 & 4.6e-5 & 1000 & 38.9092 \\
				\multicolumn{1}{|c|}{0.9}                       & 17.3717 & 6.2e-7 & 29   & \multicolumn{1}{c|}{1.4786} & 17.3717 & 8.3e-7 & 22(87)  & \multicolumn{1}{l|}{2.1785}  & 17.3717 & 1.3e-6 & 1000  & 39.0564 \\ \hline
		\end{tabular}}
	\end{center}
\end{table}

\begin{figure}[htb]
	\centering
	\includegraphics[scale=0.6]{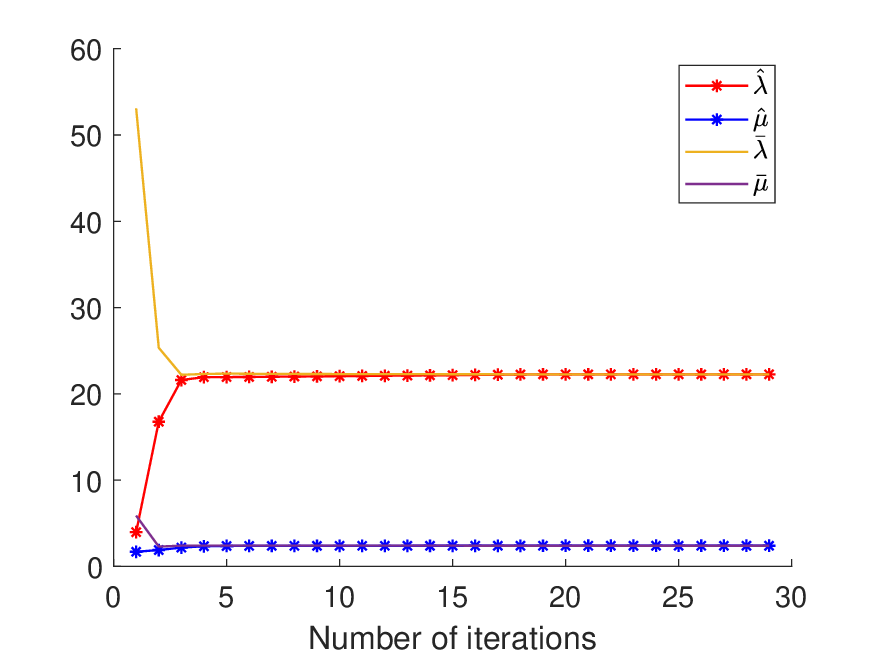}
	\caption{$\hat{\lambda}_k$, $\hat{\mu}_k$, $\bar{\lambda}_k$, $\bar{\mu}_k$ versus the number of iterations for Example \ref{exm:1} with $\beta=100$, $
		\alpha=0.9$.}
	\label{fig:monotonic_1d}
\end{figure}

\begin{figure}[htb]
	\centering
	\subfigure[$\alpha=0.2$]{
		\begin{minipage}[t]{0.6\linewidth}
			\centering
			\includegraphics[width=3in]{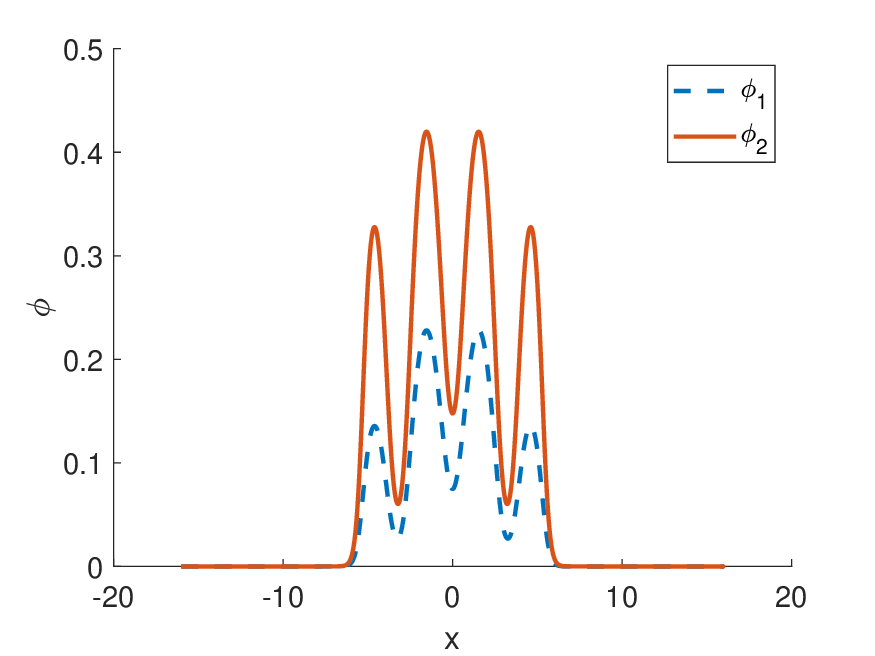}
		\end{minipage}%
	}%
	\subfigure[$\alpha=0.9$]{
		\begin{minipage}[t]{0.5\linewidth}
			\centering
			\includegraphics[width=3in]{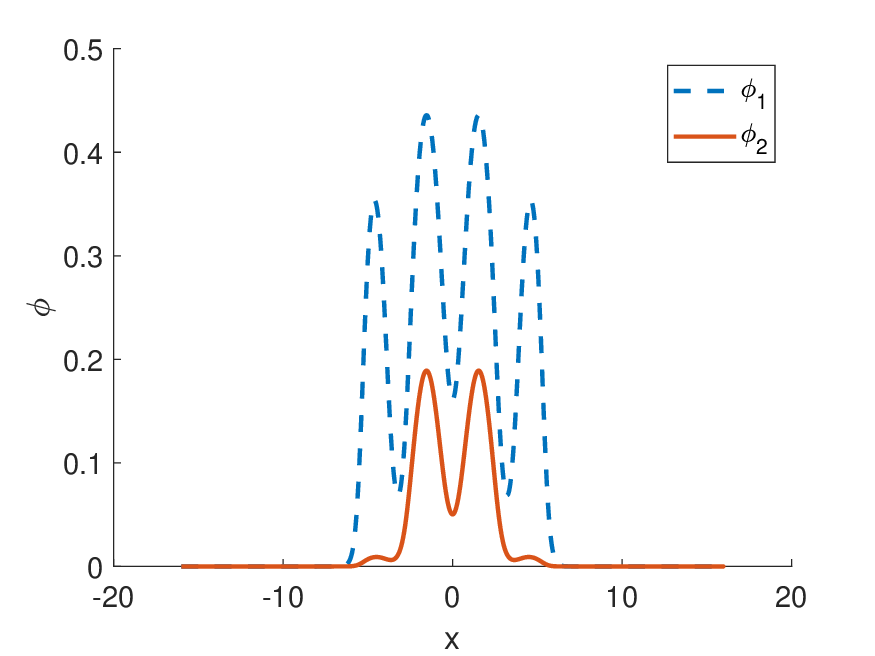}
		\end{minipage}%
	}%
	\centering
	\caption{Wave functions of the ground state of a spin-1/2 BEC for Example \ref{exm:1} with $\beta=100$, i.e., $\phi_1(x)$ (dash line) and $\phi_2(x)$ (solid line). The left is for $\alpha=0.2$, and the right is for $\alpha=0.9$.}
	\label{fig:spin-1/2}
\end{figure}

\subsection{Application in anti-ferromagnetic spin-1 BECs}
In this subsection, we apply the ANNI and ALM with the Fourier pseudo-spectral scheme to compute the ground states of the spin-1 BEC in 2D and 3D cases with different interactions. We also compare them with the ARNT method \cite{tian2020ground}. It should be noted that ARNT is used to solve the discretized spin-$F$ BEC problems \eqref{equ:spin-1} and \eqref{equ:spin-2} directly instead of \eqref{equ:problem}. Hence, the initial discretized $\Phi_0=(\phi_{-F},\cdots,\phi_{F})~(F=1,2)$ of ARNT is chosen as $\frac{1}{\sqrt{n}}[1,\cdots,1]^T\left(\sqrt{\frac{1+M}{2}},0,\sqrt{\frac{1-M}{2}}\right)$ and $\frac{1}{\sqrt{n}}[1,\cdots,1]^T\left(\frac{\sqrt{2+M}}{2},0,0,0,\frac{\sqrt{2-M}}{2}\right)$ for \eqref{equ:spin-1} and \eqref{equ:spin-2}, respectively. The stopping criterion for ARNT then is set to be the norm of the Riemannian gradient $\|\text{grad}\tilde{E}(\mathbf{u}_k)\|_2\le 10^{-6}$ on the finest mesh; please refer to \cite{tian2020ground} for the formulation of $\tilde{E}(\mathbf{u})$. In ARNT we use all of the default parameters given in the article \cite{tian2020ground}. 

The specific formulation of the minimization problem for computing the ground states of spin-1 BECs is stated as follows \cite{tian2020ground,bao2018mathematical}:
\begin{equation}\label{equ:spin-1}
	\begin{aligned}
		\min~& E(\Phi) = \int_{\mathbb{R}^d}\left\{\sum_{l=-1}^1\left(\frac{1}{2}|\nabla\phi_l|^2+(V(\mathbf{x})-pl+ql^2)|\phi_l|^2\right)+\frac{\beta_0}{2}|\Phi|^4+\frac{\beta_1}{2}|\mathbf{F}|^2\right\}d\mathbf{x}\\
		\text{s.t.}~&\int_{\mathbb{R}^d}\sum_{l=-1}^1|\phi_l(\mathbf{x})|^2d\mathbf{x}=1,~\int_{\mathbb{R}^d}\sum_{l=-1}^1l|\phi_l(\mathbf{x})|^2d\mathbf{x}=M,
	\end{aligned}
\end{equation}
where $M\in[-1,1]$ is the magnetization constraint, and $p,~q$ are the linear and quadratic Zeeman engergy shift. $\mathbf{F}:=\mathbf{F}(\Phi)=(\Phi^*f_x\Phi,\Phi^*f_y\Phi,\Phi^*f_z\Phi)^T$ is the spin vector where $f_x,f_y,f_z$ are Hermitian spin-1 matrices. $\beta_0$ and $\beta_1$ are density-dependent and spin-dependent interaction strength, respectively. According to \cite{bao2018mathematical,bao2013efficient}, when $q=0$, $M\in(-1,1)$, for the anti-ferromagnatic system $\beta_1>0$, \eqref{equ:spin-1} can be reduced to the two-component BEC problem as \eqref{equ:spin-1/2} with 
\[\beta_{11}=\beta_{22}=\beta_0+\beta_1,~\beta_{12}=\beta_0-\beta_1,~\alpha=\frac{1+M}{2}.\]
Hence, we can apply our algorithms to this kind of special anti-ferromagnetic spin-1 BECs. In all examples, we take $p=0$. Please refer \cite{tian2020ground,bao2018mathematical} for the detailed Fourier pseudo-spectral discretization procedure transforming \eqref{equ:spin-1/2} to the finite-dimensional optimization problem. In this way, $A_1$ and $A_2$ in \eqref{equ:problem} are positive definite Hermitian matrices instead of $M$-matrices. Hence, we use NRI \cite{huang2022newton} in ALM to solve subproblems (denoted by ALM$_{NRI}$).

We point out that although we replace $A_i~(i=1,2)$ with $(A_i+\bar{A_i})/2$ when $A_i$ is Hermitian in \eqref{equ:problem} for theoretical analysis, in practice, we still directly solve \eqref{equ:computing} with $A_i~(i=1,2)$ instead of $(A_i+\bar{A_i})/2$.

\begin{example}\label{exm:spin-1}
	The following cases are considered:
	\begin{itemize}
		\item 2D, $V(x,y)=\frac{1}{2}(x^2+y^2)+10[\sin^2(\frac{\pi x}{2})+\sin^2(\frac{\pi y}{2})]$, $n=2^9$. 
		\item 3D,  $V(x,y,z)=\frac{1}{2}(x^2+y^2+z^2)+100[\sin^2(\frac{\pi x}{2})+\sin^2(\frac{\pi y}{2})+\sin^2(\frac{\pi z}{2})]$, $n=2^7$.
	\end{itemize}
\end{example}

For Example \ref{exm:spin-1}, Table \ref{tab:spin-1} reports the results obtained by ANNI, ALM and ARNT with different $\beta_0$, $\beta_1$ and computational domains in 2D cases. Table \ref{tab:spin-1-3d} reports the results in 3D cases with a small computational domain. In general, ANNI can converge within around 12 iterations and shows the best performance taking the time cost and total iteration numbers into consideration. 

On the other hand, for a numerical solution to the continuous BEC problem, the computational domain in Table \ref{tab:spin-1-3d} for 3D cases with large interactions $\beta_0,~\beta_1$ may not be large enough. We present the results of 3D cases with a more reasonable large computational domain in Table \ref{tab:spin-1-3d-large}. In this case, the stopping criterion is set to be $\|\text{grad}f(\mathbf{u}_k,\mathbf{v}_k)\|_2\le 10^{-3}$ for ANNI and $\|\text{grad}\tilde{E}(\mathbf{u}_k)\|_2\le 10^{-3}$ for ARNT. ANNI may need to conduct more than one step modified NNI to obtain a better performance, while ALM$_{NRI}$ may stop with a larger function value. Therefore, we only present the results of ANNI and ARNT in Table \ref{tab:spin-1-3d-large}. The number in the bracket for ANNI then shows the total number conducting one step modified NNI. 

\begin{table}[htb]\small
	\caption{Numerical results of spin-1 BECs in 2D}
	\label{tab:spin-1}
	\begin{center}
		\setlength{\tabcolsep}{1mm}{
			\begin{tabular}{|ccccccccccccc|}
				\hline
				\multicolumn{1}{|c|}{}    & \multicolumn{4}{c|}{ANNI}                                  & \multicolumn{4}{c|}{ALM$_{NRI}$}                      & \multicolumn{4}{c|}{ARNT}         \\ \hline
				\multicolumn{1}{|c|}{$M$} & f       & nrmG   & Iter    & \multicolumn{1}{c|}{CPU(s)}   & f & nrmG & Iter & \multicolumn{1}{c|}{CPU(s)} & f       & nrmG   & Iter & CPU(s)  \\ \hline
				\multicolumn{13}{|c|}{$U=[-14,14]^2,~\beta_0=3,\beta_1=1$}                                                                                                                   \\ \hline
				\multicolumn{1}{|c|}{0.0} & 7.5123  & 3.9e-7 & 10  &  \multicolumn{1}{c|}{13.9275} & 7.5123& 2.5e-7& 10(224) &  \multicolumn{1}{c|}{31.5794} & 7.5123  & 9.9e-7 & 3(101)   & 30.4055 \\ \hline
				\multicolumn{1}{|c|}{0.2} & 7.5185  & 5.0e-7 & 10  & \multicolumn{1}{c|}{14.0306} & 7.5185  & 4.5e-7 & 9(215) & \multicolumn{1}{c|}{31.1417}  & 7.5185  &9.7e-7 & 3(203) & 38.7149 \\ \hline
				\multicolumn{1}{|c|}{0.5} & 7.5547  & 2.5e-7 & 9 & \multicolumn{1}{c|}{13.4479} & 7.5547 & 3.1e-7 & 7(193) & \multicolumn{1}{c|}{27.7404} & 7.5547  & 9.4e-7 & 3(202) &  38.3336 \\ \hline
				\multicolumn{1}{|c|}{0.9} & 7.6712  & 1.5e-8 & 7 & \multicolumn{1}{c|}{13.6484} & 7.6712 & 2.6e-7 & 5(166) & \multicolumn{1}{c|}{27.2636} & 7.6712  & 9.9e-7 & 3(105) & 29.9869  \\ \hline
				\multicolumn{13}{|c|}{$U=[-14,14]^2,~\beta_0=300,\beta_1=100$}                                                                                                               \\ \hline
				\multicolumn{1}{|c|}{0.0} & 15.1032 & 3.6e-7 & 11  & \multicolumn{1}{c|}{14.7718}  & 15.1032 & 9.5e-8 & 12(184) & \multicolumn{1}{c|}{18.4382}& 15.1032 & 9.8e-7 & 3(103)   & 22.4362  \\ \hline
				\multicolumn{1}{|c|}{0.2} & 15.1411 & 8.2e-7 & 11  & \multicolumn{1}{c|}{14.7785} & 15.1411 & 6.8e-7 & 13(180) & \multicolumn{1}{c|}{18.6051} & 15.1411 & 9.3e-7 & 3(107)  & 26.7064 \\ \hline
				\multicolumn{1}{|c|}{0.5} & 15.3436 & 8.8e-7 & 12 & \multicolumn{1}{c|}{18.0985} & 15.3436 & 8.4e-7 & 16(192) & \multicolumn{1}{c|}{25.0727} & 15.3436 & 9.4e-7 & 3(193) & 35.9765 \\ \hline
				\multicolumn{1}{|c|}{0.9} & 15.9621 & 7.5e-7 & 12 & \multicolumn{1}{c|}{21.5566} & 15.9621 & 7.7e-7 & 13(174) & \multicolumn{1}{c|}{29.0626} & 15.9621 & 1.0e-6 & 3(175) & 32.8107 \\ \hline
				\multicolumn{13}{|c|}{$U=[-16,16]^2,~\beta_0=0.3,\beta_1=0.1$}                                                                                                                 \\ \hline
				\multicolumn{1}{|c|}{0.0} & 6.5529 & 2.9e-7 & 6 & \multicolumn{1}{c|}{8.4577}  &  6.5529 & 3.9e-8 & 3(223)& \multicolumn{1}{c|}{30.8691}  & 6.5529 & 9.2e-7 & 3(65)  &17.9508 \\ \hline
				\multicolumn{1}{|c|}{0.2} & 6.5546 & 1.8e-7& 6  & \multicolumn{1}{c|}{8.6600}  & 6.5546  & 2.4e-8 &  3(221) & \multicolumn{1}{c|}{31.1892}  & 6.5546  & 9.9e-7 & 3(107) & 21.9349 \\ \hline
				\multicolumn{1}{|c|}{0.5} & 6.5639 & 3.8e-7 & 6 & \multicolumn{1}{c|}{8.9636}  &  6.5639 &  3.2e-8 & 3(332) & \multicolumn{1}{c|}{53.4762}  & 6.5639 & 9.2e-7 & 3(112)  & 21.8099 \\ \hline
				\multicolumn{1}{|c|}{0.9} & 6.5886  & 1.2e-8 & 6 & \multicolumn{1}{c|}{11.0125}  & 6.5886  & 4.0e-7 & 2(392)& \multicolumn{1}{c|}{67.6734}  & 6.5886 & 9.3e-7 & 3(92) & 21.1741\\ \hline
				\multicolumn{13}{|c|}{$U=[-16,16]^2,~\beta_0=3,\beta_1=1$}                                                                                                                   \\ \hline
				\multicolumn{1}{|c|}{0.0} & 7.5123  & 3.9e-7 & 10  &  \multicolumn{1}{c|}{13.9706} & 7.5123& 2.5e-7& 10(230) &  \multicolumn{1}{c|}{34.6489} & 7.5123  & 9.5e-7 & 3(97)   & 25.5942 \\ \hline
				\multicolumn{1}{|c|}{0.2} & 7.5185  & 5.0e-7 & 10  & \multicolumn{1}{c|}{14.1533} & 7.5185  & 4.6e-7 & 9(221) & \multicolumn{1}{c|}{34.3997}  & 7.5185  &9.9e-7 & 3(193) & 37.2005 \\ \hline
				\multicolumn{1}{|c|}{0.5} & 7.5547  & 1.1e-7 & 10 & \multicolumn{1}{c|}{14.7976} & 7.5547 & 1.7e-7 & 9(206) & \multicolumn{1}{c|}{33.2801} & 7.5547  & 8.8e-7 & 3(146) &  34.5899 \\ \hline
				\multicolumn{1}{|c|}{0.9} & 7.6712  & 1.4e-8 & 7 & \multicolumn{1}{c|}{12.9909} & 7.6712 & 3.3e-7 & 5(172) & \multicolumn{1}{c|}{32.1430} & 7.6712  & 8.5e-7 & 3(148) &  31.0586 \\ \hline
				\multicolumn{13}{|c|}{$U=[-16,16]^2,~\beta_0=300,\beta_1=100$}                                                                                                             \\ \hline
				\multicolumn{1}{|c|}{0.0} & 15.1032 & 3.2e-7 & 11   & \multicolumn{1}{c|}{14.2073}  &  15.1032 &9.6e-8 & 12(193) & \multicolumn{1}{c|}{19.6196}  & 15.1032 & 9.4e-7 & 3(95) & 19.2313 \\ \hline
				\multicolumn{1}{|c|}{0.2} & 15.1411 & 7.7e-7 & 11 & \multicolumn{1}{c|}{14.5451}  & 15.1411 & 5.7e-7 & 13(186) & \multicolumn{1}{c|}{19.4366}  & 15.1411 & 9.0e-7 & 3(134) & 25.5432 \\ \hline
				\multicolumn{1}{|c|}{0.5} & 15.3436 & 3.0e-7 & 13 & \multicolumn{1}{c|}{18.8408}  & 15.3436  & 9.6e-7 & 16(200) & \multicolumn{1}{c|}{26.2161}  & 15.3436 & 9.9e-7 & 3(174) & 33.1090\\ \hline
				\multicolumn{1}{|c|}{0.9} & 15.9621 & 8.1e-7 & 12 &  \multicolumn{1}{c|}{24.8223}  & 15.9621 & 6.7e-7 & 13(182) & \multicolumn{1}{c|}{30.2421}  & 15.9621 & 9.6e-7 & 3(136) & 28.1355 \\ \hline
				\multicolumn{13}{|c|}{$U=[-32,32]^2,~\beta_0=300,\beta_1=100$}                                                                                                               \\ \hline
				\multicolumn{1}{|c|}{0.0} & 15.1032 & 2.3e-7 & 11 & \multicolumn{1}{c|}{16.0256}  & 15.1032  & 1.1e-7 & 12(235) & \multicolumn{1}{c|}{23.8496}  & 15.1032 & 9.6e-7 & 3(77) & 17.2957 \\ \hline
				\multicolumn{1}{|c|}{0.2} & 15.1411 & 7.4e-7 & 11  & \multicolumn{1}{c|}{16.3504}  & 15.1411  &  5.7e-7 & 13(227) & \multicolumn{1}{c|}{23.9402}  & 15.1411 & 8.7e-7 & 3(85) & 20.9296\\ \hline
				\multicolumn{1}{|c|}{0.5} & 15.3436 & 4.2e-7 & 13 & \multicolumn{1}{c|}{21.8214}  &  15.3436 & 4.2e-7 &  16(238) & \multicolumn{1}{c|}{32.2173}  & 15.3436 & 9.8e-7 & 3(122) & 26.0992\\ \hline
				\multicolumn{1}{|c|}{0.9} & 15.9621 & 6.1e-7 & 12  & \multicolumn{1}{c|}{24.7720}  &  15.9621 &  6.7e-7  &  13(221) & \multicolumn{1}{c|}{45.3842} & 15.9621 & 9.9e-7 & 3(110) & 25.2033 \\ \hline
		\end{tabular}}
	\end{center}
\end{table}

\begin{table}[htb]
	\caption{Numerical results of spin-1 BECs in 3D with a small computational domain}
	\label{tab:spin-1-3d}
	\begin{center}
		\setlength{\tabcolsep}{1mm}{
			\begin{tabular}{|ccccccccccccc|}
				\hline
				\multicolumn{1}{|c|}{}    & \multicolumn{4}{c|}{ANNI}                               & \multicolumn{4}{c|}{ALM$_{NRI}$}                                    & \multicolumn{4}{c|}{ARNT}            \\ \hline
				\multicolumn{1}{|c|}{$M$} & f       & nrmG   & Iter & \multicolumn{1}{c|}{CPU(s)}   & f       & nrmG   & Iter     & \multicolumn{1}{c|}{CPU(s)}   & f       & nrmG   & Iter   & CPU(s)   \\ \hline
				\multicolumn{13}{|c|}{$U=[-2,2]^3,~\beta_0=3,~\beta_1=1$}                                                                                                                                                     \\ \hline
				\multicolumn{1}{|c|}{0.0} & 35.5665 & 5.7e-7 & 14   & \multicolumn{1}{c|}{125.4266} & 35.5665 & 4.9e-7 & 13(293)  & \multicolumn{1}{c|}{305.8099} & 35.5665 & 9.3e-7 & 3(141) & 320.2789 \\
				\multicolumn{1}{|c|}{0.2} & 35.5819 & 2.8e-7 & 15   & \multicolumn{1}{c|}{144.2015} & 35.5819 & 4.2e-7 & 11(275)  & \multicolumn{1}{c|}{286.1644} & 35.5819 & 8.9e-7 & 3(274) & 468.2085 \\
				\multicolumn{1}{|c|}{0.5} & 35.6633 & 2.0e-7 & 22   & \multicolumn{1}{c|}{258.7343} & 35.6633 & 3.0e-7 & 12(278)  & \multicolumn{1}{c|}{335.1609} & 35.6633 & 9.6e-7 & 3(333) & 520.1443 \\
				\multicolumn{1}{|c|}{0.9} & 35.9170 & 5.7e-7 & 11   & \multicolumn{1}{c|}{125.4266} & 35.9170 & 5.4e-7 & 8(244)   & \multicolumn{1}{c|}{410.6428} & 35.9170 & 8.8e-7 & 3(470) & 605.4597 \\ \hline
				\multicolumn{13}{|c|}{$U=[-2,2]^3,~\beta_0=300,~\beta_1=100$}                                                                                                                                                 \\ \hline
				\multicolumn{1}{|c|}{0.0} & 72.5380 & 6.1e-7 & 10    & \multicolumn{1}{c|}{34.1841}  & 72.5380 & 9.5e-7 & 10(212) & \multicolumn{1}{c|}{103.9954} & 72.5380 & 9.7e-7 & 3(52)  & 134.3032 \\
				\multicolumn{1}{|c|}{0.2} & 72.7579 & 4.4e-7 & 10    & \multicolumn{1}{c|}{50.3095}  & 72.7579 & 7.3e-7 & 11(215)  & \multicolumn{1}{c|}{104.7343}  & 72.7579 & 9.9e-7 & 3(50)  & 154.6070 \\
				\multicolumn{1}{|c|}{0.5} & 73.9404 & 4.5e-7 & 11    & \multicolumn{1}{c|}{59.2536}  & 73.9404 & 5.1e-7 & 11(216)  & \multicolumn{1}{c|}{110.3835} & 73.9404 & 9.5e-7 & 3(109)  & 181.7030 \\
				\multicolumn{1}{|c|}{0.9} & 77.4006 & 9.5e-7 & 14   & \multicolumn{1}{c|}{105.6792} & 77.4006 & 4.7e-7 & 13(222)  & \multicolumn{1}{c|}{162.9178} & 77.4006 & 9.4e-7 & 3(141) & 212.0984 \\ \hline
		\end{tabular}}
	\end{center}
\end{table}

\begin{table}[htb]
	\caption{Numerical results of spin-1 BECs in 3D with a larger computational domain}
	\label{tab:spin-1-3d-large}
	\begin{center}
	\begin{tabular}{|ccccccccc|}
		\hline
		\multicolumn{1}{|c|}{}    & \multicolumn{4}{c|}{ANNI}                                 & \multicolumn{4}{c|}{ARNT}            \\ \hline
		\multicolumn{1}{|c|}{M}   & f       & nrmG   & Iter   & \multicolumn{1}{c|}{CPU(s)}   & f       & nrmG   & Iter   & CPU(s)   \\ \hline
		\multicolumn{9}{|c|}{$U=[-16,16]^3,~\beta_0=300,~\beta_1=100$}                                                               \\ \hline
		\multicolumn{1}{|c|}{0.0} & 48.2473 & 9.6e-4 & 17(102)     & \multicolumn{1}{c|}{189.6522} & 48.2473 & 7.4e-4 & 2(78) & 148.8557 \\ \hline
		\multicolumn{1}{|c|}{0.2} & 48.3024 & 9.4e-4 & 28     & \multicolumn{1}{c|}{141.4743} & 48.3024 & 8.1e-4 & 4(168) & 248.1866 \\ \hline
		\multicolumn{1}{|c|}{0.5} & 48.6055 & 9.4e-4 & 31     & \multicolumn{1}{c|}{128.1350} & 48.6055 & 8.4e-4 & 2(38) & 143.9281 \\ \hline
		\multicolumn{1}{|c|}{0.9} & 49.5440 & 8.5e-4 & 12(69) & \multicolumn{1}{c|}{189.8731} & 49.5440 & 8.3e-4 & 2(90) & 202.8079 \\ \hline
	\end{tabular}
\end{center}
\end{table}

Although ANNI has the possibility to achieve a nearly quadratically convergence, it is not as efficient as expected when computing \eqref{equ:spin-1} in 3D cases over relatively large computational domains. For 2D cases, we have found a good parameter-selecting strategy `st3' that can ensure the efficiency of ANNI, while for 3D cases, such a strategy has not been figured out. As shown in Table \ref{tab:spin-1-3d-large}, the accuracy of nrmG for ANNI is low in this case. Possible explanations for this phenomenon may be that the main linear system involving $J_{\mathbf{v}}(\mathbf{u},\lambda)$ is computed by CG in an inexact iteration way, and different condition numbers of $J_{\mathbf{v}}(\mathbf{u},\lambda)$ affect the convergence rate significantly. It is worthwhile to find better preconditioners and linear system solvers, as well as better strategies to select $\lambda_k$ and $\mu_k$ adaptively as stated in subsection 6.1.2, which is beyond the scope of this article. We remark that for relatively weak interactions, for which a relatively small computational domain is enough in 3D cases, ANNI may still work well. 

\subsection{Application in spin-2 BECs}
Consider the ground state of the following spin-2 multi-component BEC problem \cite{tian2020ground,bao2018mathematical}:
\begin{equation}\label{equ:spin-2}
	\begin{aligned}
		\min~&E(\Phi) = \int_{\mathbb{R}^d}\left\{\sum_{l=-2}^2\left(\frac{1}{2}|\nabla\phi_l|^2+(V(\mathbf{x})-pl+ql^2)|\phi_l|^2\right)+\frac{\beta_0}{2}|\Phi|^4+\frac{\beta_1}{2}|\mathbf{F}|^2+\frac{\beta_2}{2}|A_{00}|^2\right\}d\mathbf{x}\\
		\rm{s.t.}~& \int_{\mathbb{R}^d}\sum_{l=-2}^2|\phi_l|^2d\mathbf{x}=1,\quad \int_{\mathbb{R}^d}\sum_{l=-2}^2l|\phi_l|^2d\mathbf{x}=M,
	\end{aligned}
\end{equation}
where $M\in[-2,2]$ is the magnetization constraint, $\beta_2$ is the spin-singlet interaction strength and all the other parameters $p,q,\beta_0$ and $\beta_1$ are the same as those in the spin-1 case, $\mathbf{F}$ is the spin vector defined by spin-2 matrices similar to $\mathbf{F}$ for spin-1 BECs. $A_{00}:=A_{00}(\Phi)=\Phi^TA\Phi$, where the matrix 
\[A=\frac{1}{\sqrt{5}}\begin{bmatrix}0&0&0&0&1\\0&0&0&-1&0\\0&0&1&0&0\\0&-1&0&0&0\\1&0&0&0&0\end{bmatrix}.\]

According to \cite{bao2018mathematical,bao2013efficient}, for the special case with $\beta_2<0$, $\beta_1>\frac{\beta_2}{20}$, \eqref{equ:spin-2} can be transformed to an equivalent two-component BEC problem \eqref{equ:spin-1/2} with 
\[\beta_{11}=\beta_{22}=\beta_0+4\beta_1,~ \beta_{12}=\beta_0-4\beta_1+\frac{2}{5}\beta_2,~ \alpha=\frac{2+M}{4}.\]
For the spin-2 BEC problem, we also discretize it by the Fourier pseudo-spectral scheme.

\begin{example}\label{exm:spin-2}
	The following cases are considered:
	\begin{itemize}
		\item 2D, $V(x,y)=\frac{1}{2}(x^2+y^2)+10[\sin^2(\frac{\pi x}{2})+\sin^2(\frac{\pi y}{2})]$, $n=2^8$. 
		\item 3D,  $V(x,y,z)=\frac{1}{2}(x^2+y^2+z^2)+100[\sin^2(\frac{\pi x}{2})+\sin^2(\frac{\pi y}{2})+\sin^2(\frac{\pi z}{2})]$, $n=2^7$.
	\end{itemize}
\end{example}

The stopping criterion is set to be the same as the spin-1 cases. Comparison results for spin-2 BECs are summarized in Tables \ref{tab:spin-2}, \ref{tab:spin-2-3d} and \ref{tab:spin-2-3d-large}. The numerical results are similar to those for the spin-1 cases.

\begin{table}[htb]
	\caption{Numerical results of spin-2 BECs in 2D}\label{tab:spin-2}
	\begin{center}
		\setlength{\tabcolsep}{1mm}{
			\begin{tabular}{|ccccccccccccc|}
				\hline
				\multicolumn{1}{|c|}{}    & \multicolumn{4}{c|}{ANNI}                                & \multicolumn{4}{c|}{ALM$_{NRI}$}                      & \multicolumn{4}{c|}{ARNT}         \\ \hline
				\multicolumn{1}{|c|}{$M$} & f       & nrmG   & Iter   & \multicolumn{1}{c|}{CPU(s)}  & f & nrmG & Iter & \multicolumn{1}{c|}{CPU(s)} & f       & nrmG   & Iter & CPU(s)  \\ \hline
				\multicolumn{13}{|c|}{$U=[-6,6]^2,~\beta_0=243,\beta_1=12.1,\beta_2=-13$}                                                                                                \\ \hline
				\multicolumn{1}{|c|}{0.0} & 14.3386 & 6.8e-7 & 15 & \multicolumn{1}{c|}{4.9019} & 14.3386 & 6.4e-7 & 17(354)   & \multicolumn{1}{c|}{10.8034} & 14.3386 & 9.6e-7 & 3(100)  & 11.6523 \\ \hline
				\multicolumn{1}{|c|}{0.5} & 14.3730 & 6.5e-7 & 16 & \multicolumn{1}{c|}{6.1160} & 14.3730  & 4.7e-7 & 17(324) & \multicolumn{1}{c|}{10.9607} & 14.3730 & 9.8e-7 & 3(252) & 20.1984 \\ \hline
				\multicolumn{1}{|c|}{1.5} & 14.6754 & 5.8e-7 & 16 & \multicolumn{1}{c|}{6.5847} & 14.6754  & 1.5e-7 & 16(293) & \multicolumn{1}{c|}{11.3848} & 14.6754 & 9.8e-7 & 3(283) & 21.4564 \\ \hline
				\multicolumn{13}{|c|}{$U=[-8,8]^2,~\beta_0=243,\beta_1=12.1,\beta_2=-13$}                                                                                                \\ \hline
				\multicolumn{1}{|c|}{0.0} & 14.3386 & 6.1e-7 & 15 & \multicolumn{1}{c|}{4.8523}  & 14.3386  & 6.4e-7 & 17(395) & \multicolumn{1}{c|}{13.4112}  & 14.3386 & 9.1e-7 & 3(86) & 9.0082 \\ \hline
				\multicolumn{1}{|c|}{0.5} & 14.3730 & 6.7e-7 & 16 & \multicolumn{1}{c|}{6.0414} & 14.3730  & 4.7e-7 & 17(342) & \multicolumn{1}{c|}{15.5215} & 14.3730 & 9.7e-7 & 3(188)   & 15.3355 \\ \hline
				\multicolumn{1}{|c|}{1.5} & 14.6754 & 8.0e-7 & 16 & \multicolumn{1}{c|}{6.4515} & 14.6754  & 1.5e-7 & 16(315) & \multicolumn{1}{c|}{16.6045} & 14.6754 & 9.6e-7 & 3(206) & 16.0236 \\ \hline
				\multicolumn{13}{|c|}{$U=[-8,8]^2,~\beta_0=5,\beta_1=1,\beta_2=-1$}                                                                                                      \\ \hline
				\multicolumn{1}{|c|}{0.0} & 7.8431  & 1.5e-7 & 7 & \multicolumn{1}{c|}{2.1721} & 7.8431  & 7.8e-7 & 3(177) & \multicolumn{1}{c|}{5.6536} & 7.8431  & 8.8e-7 & 3(115) & 12.3909 \\ \hline
				\multicolumn{1}{|c|}{0.5} & 7.8648  & 2.8e-7 & 7 & \multicolumn{1}{c|}{2.4744} &  7.8648 & 8.5e-7  & 3(175)  & \multicolumn{1}{c|}{8.2934} & 7.8648  & 9.8e-7 & 3(129) & 13.0496 \\ \hline
				\multicolumn{1}{|c|}{1.5} & 8.0695  & 2.6e-7 & 7 & \multicolumn{1}{c|}{2.6798} & 8.0695  & 1.6e-7 & 3(170)& \multicolumn{1}{c|}{8.6021} & 8.0695  & 9.6e-7 & 3(191) & 16.3471 \\ \hline
				\multicolumn{13}{|c|}{$U=[-8,8]^2,~\beta_0=500,\beta_1=100,\beta_2=-100$}                                                                                                \\ \hline
				\multicolumn{1}{|c|}{0.0} & 17.0099 & 9.6e-9 & 8 & \multicolumn{1}{c|}{2.1687} & 17.0099 & 2.8e-7 & 4(136) & \multicolumn{1}{c|}{3.6389}  & 17.0099 & 9.9e-7 & 3(107)   & 12.7421  \\ \hline
				\multicolumn{1}{|c|}{0.5} & 17.1973 & 3.1e-7 & 7 & \multicolumn{1}{c|}{2.0649} & 17.1973 & 1.6e-8 & 6(147) & \multicolumn{1}{c|}{5.5626} & 17.1973 & 8.3e-7 & 3(152) & 13.7900 \\ \hline
				\multicolumn{1}{|c|}{1.5} & 18.7462 & 1.3e-8 & 8 & \multicolumn{1}{c|}{2.8585} & 18.7462 & 8.6e-7 & 4(135) & \multicolumn{1}{c|}{5.5234}  & 18.7462 & 9.7e-7 & 3(139) & 14.3076 \\ \hline
		\end{tabular}}
	\end{center}
\end{table}

\begin{table}[htb]
	\caption{Numerical results of spin-2 BECs in 3D with a small computational domain}
	\label{tab:spin-2-3d}
	\begin{center}
		\setlength{\tabcolsep}{1mm}{
			\begin{tabular}{|ccccccccccccc|}
				\hline
				\multicolumn{1}{|c|}{}    & \multicolumn{4}{c|}{ANNI}                               & \multicolumn{4}{c|}{ALM$_{NRI}$}                                    & \multicolumn{4}{c|}{ARNT}            \\ \hline
				\multicolumn{1}{|c|}{$M$} & f       & nrmG   & Iter & \multicolumn{1}{c|}{CPU(s)}   & f       & nrmG   & Iter     & \multicolumn{1}{c|}{CPU(s)}   & f       & nrmG   & Iter   & CPU(s)   \\ \hline
				\multicolumn{13}{|c|}{$U=[-2,2]^3,\beta_0=243,\beta_1=12.1,\beta_2=-13$}                                                                 \\ \hline
				\multicolumn{1}{|c|}{0.0} & 68.1976 & 6.9e-7 & 15   & \multicolumn{1}{c|}{73.0746} &68.1976 &5.8e-7 & 22(292)  & \multicolumn{1}{c|}{120.7202} & 68.1976 & 9.7e-7 & 3(52) & 221.4341 \\
				\multicolumn{1}{|c|}{0.5} & 68.4124 & 9.9e-7 & 16   & \multicolumn{1}{c|}{82.9188} & 68.4124 & 2.8e-7 & 18(280)  & \multicolumn{1}{c|}{118.7541} & 68.4124 & 7.9e-7 & 3(123) & 307.8404\\
				\multicolumn{1}{|c|}{1.5} & 70.2096 & 4.9e-7 & 18 & \multicolumn{1}{c|}{122.9136} & 70.2096 & 8.0e-7 & 18(276)   & \multicolumn{1}{c|}{151.8907} & 70.2096 & 9.2e-7 & 3(168) & 361.8182 \\ \hline
				\multicolumn{13}{|c|}{$U=[-4,4]^3,\beta_0=5,\beta_1=1,\beta_2=-1$}                                                                                                                                                 \\ \hline
				\multicolumn{1}{|c|}{0.0} & 35.8034 & 6.4e-7 & 9    & \multicolumn{1}{c|}{93.9882}  & 35.8034 & 1.4e-7 & 5(202) & \multicolumn{1}{c|}{192.6018} & 35.8034 & 9.6e-7 & 3(64)  & 294.5964 \\
				\multicolumn{1}{|c|}{0.5} & 35.8422 & 6.3e-7 & 10   & \multicolumn{1}{c|}{110.3853}  & 35.8422 & 1.2e-7 & 5(200)  & \multicolumn{1}{c|}{209.0550} & 35.8422 & 1.0e-6 & 3(144)  & 420.2329 \\
				\multicolumn{1}{|c|}{1.5} & 36.2132 & 1.7e-7 & 12  & \multicolumn{1}{c|}{200.5335} & 36.2132 & 3.8e-9 & 5(207) & \multicolumn{1}{c|}{325.2515} & 36.2132 & 9.2e-7 & 3(144) & 373.1969 \\ \hline
		\end{tabular}}
	\end{center}
\end{table}

\begin{table}[htbp]
	\caption{Numerical results of spin-2 BECs in 3D with a larger computational domain}
	\label{tab:spin-2-3d-large}
	\begin{center}
	\begin{tabular}{|ccccccccc|}
		\hline
		\multicolumn{1}{|c|}{}    & \multicolumn{4}{c|}{ANNI}                               & \multicolumn{4}{c|}{ARNT}            \\ \hline
		\multicolumn{1}{|c|}{M}   & f       & nrmG   & Iter & \multicolumn{1}{c|}{CPU(s)}   & f       & nrmG   & Iter   & CPU(s)   \\ \hline
		\multicolumn{9}{|c|}{$U=[-16,16]^3,~\beta_0=243,~\beta_1=12.1,~\beta_2=-13$}                                               \\ \hline
		\multicolumn{1}{|c|}{0.0} & 47.0361 & 9.6e-4 & 26   & \multicolumn{1}{c|}{220.5565} & 47.0361 & 9.9e-4 & 2(45) & 231.1504 \\ \hline
		\multicolumn{1}{|c|}{0.5} & 47.0888 & 9.9e-4 & 40   & \multicolumn{1}{c|}{169.2251} & 47.0888 & 9.5e-4 & 5(266) & 641.8977 \\ \hline
		\multicolumn{1}{|c|}{1.5} & 47.5668 & 9.6e-4 & 25   & \multicolumn{1}{c|}{215.7031} & 47.5668 & 9.4e-4 & 4(167) & 462.2247 \\ \hline
	\end{tabular}
\end{center}
\end{table}

\begin{remark}
	Finally, we want to remark that for the numerical experiments in this paper, the cascadic multigrid is not used in ANNI, and the initial data is $\mathbf{u}_0=\frac{1}{\sqrt{n}}[1,\cdots,1]^T$, $\mathbf{v}_0=\frac{1}{\sqrt{n}}[1,\cdots,1]^T\in\mathbb{R}^n$. Under this setting, the strategy `st3' for $\lambda_k$ and $\mu_k$ shows the best performance. However, if other initial data like the discretized $\phi_0(\textbf{x})=\frac{1}{\pi^{d/4}}e^{-(x_1^2+\cdots+x_d^2)/2}$, or the multigrid technique is used, the strategy for selecting $\lambda_k,~\mu_k$ may require appropriate modifications. We will not present the details of their implementation within this article for brevity.
\end{remark}

\section{Conclusion}\label{sec:conclude}
In this paper, the discretized energy functional minimization problem of a class of special multi-component BECs was considered. The original problem was reduced to a structured nonconvex optimization problem over spherical constraints. We apply the alternating minimization scheme to solve it. In particular, an easy-to-implement alternating Newton-Noda iteration (ANNI) with the one-step modified NNI in each inner iteration was designed to solve the discretized problem. The global convergence is guaranteed based on the sufficient descent property of the objective value. We also proved that the proposed algorithms are positivity preserving under mild conditions. Furthermore, ANNI can be applied to a class of more general multi-block nonconvex minimization problems.

Numerical results on different multi-component BECs are provided to confirm our theoretical results and demonstrate the efficiency of ANNI, especially for the weak interaction cases, for which a relatively small computational domain is enough. In this case, ANNI shows the possibility to converge nearly quadratically. However, it converges slowly when the computational domain is rather large in 3D cases. The explicit convergence rate analysis concerning different $J_{\mathbf{v}}(\mathbf{u},\lambda)$ needs further research, including whether the local quadratic convergence can be theoretically guaranteed under certain conditions. It is still an interesting problem to study further whether to improve the performance of ANNI via better preconditioners, other efficient linear system solvers, or approximating $J_{\mathbf{v}}(\mathbf{u},\lambda)$ by well-conditioned matrices. In particular, it is worth exploring the strategy for computing a better $\lambda_k$ or $\mu_k$ adaptively to improve ANNI in future work.

\section*{Acknowledgments} The authors are grateful to Dr. Yuanzhou Fang for his valuable suggestions about this article and Dr. Tonghua Tian for providing the code of ARNT kindly. The authors would also like to thank the anonymous referees for their valuable suggestions, which help us to improve the paper greatly.

\bibliographystyle{siamplain}
\bibliography{ref}
\end{document}